\newtheorem{thm}{Theorem}[section]
\newtheorem{prop}[thm]{Proposition}
\newtheorem{cor}[thm]{Corollary}
\newtheorem{lm}[thm]{Lemma}
\newtheorem*{thm*}{Theorem}
\theoremstyle{definition}
\newtheorem{df}[thm]{Definition}
\newtheorem{ex}[thm]{Example}
\newtheorem{num}[thm]{}
\newtheorem{paragr}[thm]{}
\theoremstyle{remark}
\newtheorem{rem}[thm]{Remark}
\numberwithin{equation}{thm}
\newcommand{\lis} {\mathscr Sm(k)}
\newcommand{\lisc} {\mathscr Sm^{cor}(k)}
\newcommand{\dmgme} {DM_{gm}^{eff}\!(k)}
\newcommand{\dmgm} {DM_{gm}\!(k)}
\newcommand{\dmm} {DM_{-}^{eff}\!(k)}
\newcommand{\DM} {DM(k)}
\newcommand{\C}{\mathscr C}
\newcommand{\HH}{\mathrm H}
\newcommand{\sHom} {\underline{\mathrm{Hom}}}
\newcommand{\Hom}{\operatorname{\mathrm{Hom}}}
\newcommand{\mor}[3] {\mathrm{Hom}_{#1} \! \left( #2,#3 \right)}
\newcommand{\mot}[1] {M\!\left( #1 \right)}
\newcommand{\motx}[2] {M_{#2}\!\left( #1 \right)}
\newcommand{\cohm}[3]{H^{#1}_{\mathcal M}(#3;\ZZ(#2))}
\newcommand{\spec}[1] {\mathrm{Spec}(#1)}
\newcommand{\tra}{{}^t}
\newcommand{\dual}[1]{#1^{*}}
\newcommand{\unit}{\mathbf 1}
\newcommand{\chern}[2] {\mathfrak c_{#1}(#2)}
\newcommand{\lef}[2] {\mathfrak l_{#1}(#2)}
\newcommand{\pur}[1] { \mathfrak p_{(#1)} }
\newcommand{\dte} { \mathbb A^1_k }
\newcommand{\dten}[1] { \mathbb A^{#1}_k }
\newcommand{\dtex}[1] { \mathbb A^1_{#1} }
\newcommand{\dtenx}[2] { \mathbb A^{#1}_{#2} }
\newcommand{\PP} { \mathbb P }
\newcommand{\ZZ} {\mathbb Z}
\newcommand{\QQ} {\mathbb Q}
\newcommand{\ppred} {\textbf{(Red)}~}
\newcommand{\ppmv} {\textbf{(MV)}~}
\newcommand{\pphtp} {\textbf{(Htp)}~}
\newcommand{\ppexc} {\textbf{(Exc)}~}
\newcommand{\ppadd} {\textbf{(Add)}~}
\newcommand{\ecupp} {\scriptstyle\boxtimes\textstyle\!}
\newcommand{\ecuppx}[1] {\scriptstyle\boxtimes_{#1}\textstyle\!}
\newcommand{\dtwist}[1]{\!((#1))}
\begin{document}

\title{Around the Gysin triangle {I}}

\author{Fr\'ed\'eric D\'eglise}
\address{LAGA\\
CNRS~(UMR 7539)\\
Universit\'e Paris~13\\
\hbox{Avenue~Jean-Baptiste~Cl\'ement}\\
93430 Villetaneuse\\France}
\email{deglise@math.univ-paris13.fr}
\urladdr{http://www.math.univ-paris13.fr/~deglise/}
\thanks{Partially supported by the ANR (grant No. ANR-07-BLAN-042)}

\date{2005 - last revision: 04/2011}

\begin{abstract}
%In \cite[chap. 5]{FSV}, V.~Voevodsky introduces the Gysin triangle 
%associated with a closed immersion $i$ between smooth schemes.
%This triangle contains the Gysin morphism associated with $i$ but also
%the residue morphism.
%
%In \cite{Deg3} and \cite{Deg5bis}, we started a study of the Gysin triangle 
%and especially its functoriality. In this article, we complete this study
%by proving notably the functoriality of the Gysin morphism of a closed 
%immersion. This allows us to define a general Gysin morphism attached
%to a projective morphism between smooth schemes which we study further.
%As an illustration, we deduce a direct proof of duality
%for motives of projective smooth schemes.
%
%Finally, this study also involves the residue morphisms. Indeed
% formulas with the Gysin morphisms of closed immersions 
% have their counterpart for the corresponding residue morphisms.
%We exploit these formulas in a computation of the $E_1$-differentials
%of the coniveau spectral sequence analog to that of Quillen in $K$-theory
%and deduce results on the coniveau spectral sequence associated
%with realization functors.
We define and study Gysin morphisms on mixed motives over a perfect field.
 Our construction extends the case of closed immersions, 
 already known from results of Voevodsky, to arbitrary projective morphisms.
 We prove several classical formulas in this context,
  such as the projection and excess intersection formulas,
  and some more original ones involving residues.
 We give an application of this construction to duality
  and motive with compact support.
\end{abstract}

\maketitle

\section*{Introduction}

Since Poincar\'e discovers the first instance of duality in singular homology, mathematicians slowly became aware that most of cohomology theories could be equipped with an exceptional functoriality, covariant, usually referred to as either transfer, trace or more recently Gysin morphism\footnote{
 The term \emph{transfer} is more frequently used for finite morphisms,
  \emph{trace} for structural morphisms of projective smooth schemes over a field,
  and \emph{Gysin morphisms} for the zero section of a vector bundle, usually understand
  as a part of the Gysin long exact sequence.}.
 In homology, this kind of exceptional functoriality exists accordingly.
 The most famous case is the pullback on Chow groups.
Motives of Voevodsky are homological: they are naturally covariant. As they modeled homology theory, they should be equipped with an exceptional functoriality, contravariant. This is what we primarily prove here for smooth schemes over a field. Further,
 we focus on the two fundamental properties of Gysin morphisms: their functorial nature and their compatibility with the natural functoriality, corresponding to various projection formulas. The reader can already guess the intimate relationship of this theory with the classical intersection theory.

The predecessor of our construction was to be found in the Gysin triangle defined by Voevodsky\footnote{See \cite[chap. 5, Prop. 3.5.4]{FSV}.} 
for motives over a perfect field $k$: 
 associated with a closed immersion $i:Z \rightarrow X$ between smooth $k$-schemes, 
 Voevodsky constructs a distinguished triangle of mixed motives:
$$
M(X-Z) \rightarrow M(X) \xrightarrow{i^*} M(Z)(n)[2n]
 \xrightarrow{\partial_{X,Z}} M(X-Z)[1].
$$
The arrow labelled $i^*$ is the Gysin morphism associated with the closed immersion $i$.
Because this triangle corresponds to the so-called localization long exact sequence in cohomology,
fundamental in Chow and higher Chow theory, it has a central position in the theory of mixed motives. In \cite{Deg3} and \cite{Deg5bis}, we studied its naturality,
which corresponds to the projection formulas mentionned in the first paragraph,
 for the Gysin morphism $i^*$. 
 Interestingly, we discovered that these formulas had 
 counterpart for the \emph{residue morphism} $\partial_{X,Z}$ appearing in 
 the Gysin triangle\footnote{The reader is referred to section \ref{sec:base_change}
 for a summary of these results.}.
The main technical result 
 of this article (see Theorem \ref{thm:assoc}) is the functoriality property 
 of the Gysin morphism $i^*$.
 But, as in the case of projection formulas, 
 this comes with new formulas for the residue morphism. Let us quote it now:
\begin{thm*}
Let $X$ be a smooth $k$-scheme, $Y$ (resp. $Y'$) be a	smooth closed subscheme of $X$
of pure codimension $n$ (resp. $m$). 
Assume the reduced scheme $Z$ associated with $Y \cap Y'$
 is smooth of pure codimension $d$.
Put $Y_0=Y-Z$, $Y'_0=Y'-Z$, $X_0=X-Y \cup Y'$.

Then the following diagram,
with $i$,$j$,$k$,$l$,$i'$ the evident closed immersions,
 is commutative~:
$$
\xymatrix@R=16pt@C=24pt{
M(X)\ar^-{j^*}[r]\ar_{i^*}[d]\ar@{}|{(1)}[rd]
 & M(Y')(m)[2m]\ar^-{\partial_{X,Y'}}[r]\ar^{k^*}[d]
    \ar@{}^/8pt/{(2)}[rd]
 & M(X-Y')[1]\ar^{(i')^*}[d] \\
M(Y)(n)[2n]\ar_{l^*}[r]
 & M(Z)(d)[2d]\ar^/-5pt/{\partial_{Y,Z}}[r]
     \ar^{\partial_{Y',Z}}[d]\ar@{}|{(3)}[rd]
 & M(Y_0)(n][2n+1]\ar^{\partial_{X_0,Y_0}}[d] \\
& M(Y'_0)(m)[2m+1]\ar_-{-\partial_{X_0,Y'_0}}[r]
 & M(X_0)[2].
}
$$
\end{thm*}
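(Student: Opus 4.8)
The plan is to re‑express the whole diagram in terms of motives with supports and the localisation triangles carrying the purity isomorphisms, to localise the essential new content in cell~(1), and to obtain that content by deformation to the normal cone of $Z$ in $X$. For a smooth closed subscheme $W$ of codimension $c$ in an open $V\subseteq X$, write $M_W(V)$ for the cone of $M(V-W)\to M(V)$ and $\mathfrak p_{W,V}\colon M_W(V)\xrightarrow{\ \sim\ } M(W)(c)[2c]$ for the purity isomorphism of Section~\ref{sec:pur}. After transport through the relevant $\mathfrak p$'s, each Gysin morphism $i^{*},j^{*},k^{*},l^{*},(i')^{*}$ becomes a canonical corestriction $M(V)\to M_W(V)$ and each residue $\partial$ becomes a connecting morphism $M_W(V)\to M(V-W)[1]$. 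With this dictionary the rows and columns are (twisted, shifted) localisation triangles: the top row is the triangle of $Y'\hookrightarrow X$, while the second row, the third column and the second column are the triangles of $Z\hookrightarrow Y$, of $Y_0\hookrightarrow X-Y'$ and of $Z\hookrightarrow Y'$, twisted by $(n)[2n]$, $[1]$ and $(m)[2m]$ respectively, and the bottom arrow of the third row is, up to sign, that of the triangle of $Y'_0\hookrightarrow X-Y$ twisted by $[1]$. Finally one records the Mayer--Vietoris triangle of the open cover $\{X-Y,\ X-Y'\}$ of $X-Z$ (note $(X-Y)\cup(X-Y')=X-Z$ and $(X-Y)\cap(X-Y')=X_0$).

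The crux is cell~(1): $l^{*}\circ i^{*}=k^{*}\circ j^{*}$. Since $Z$ is smooth of codimension $d$ in $X$ it carries a Gysin morphism $i^{*}_{Z,X}\colon M(X)\to M(Z)(d)[2d]$, and I would prove $l^{*}\circ i^{*}=i^{*}_{Z,X}=k^{*}\circ j^{*}$; each equality is the compatibility of the purity isomorphism with the composition of two closed immersions. As $i^{*}_{Z,X}$ factors through $M_Z(X)\xrightarrow{\mathfrak p_{Z,X}}M(Z)(d)[2d]$, and both composites factor through $M_Z(X)$ in the same way, each equality reduces to an identity between morphisms \emph{out of} $M_Z(X)$, and so only concerns the behaviour near $Z$. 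One then deforms to the normal cone: $D_ZX$ is smooth over $\dte$, contains $D_ZY$, $D_ZY'$ and $Z\times\dte$, has generic fibre $(X,Y,Y',Z)$ and special fibre the linear model $(N_ZX,N_ZY,N_ZY',Z)$ with $N_ZY\cap N_ZY'=Z$ the zero section, and the deformation‑invariance of the motive with support identifies the situation near $Z$ with that model. There every Gysin morphism is, through the Thom isomorphisms, cup‑product with the Euler class of a quotient of $N_ZX$ by one of the sub‑bundles, so $l^{*}\circ i^{*}$ and $k^{*}\circ j^{*}$ both become cup‑product with $e(N_ZY)\cdot e(N_ZX/N_ZY)=e(N_ZX)=e(N_ZY')\cdot e(N_ZX/N_ZY')$: cell~(1) is the multiplicativity of Euler classes along short exact sequences of vector bundles.

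Cells~(2) and~(3) I would then deduce from cell~(1) and the functoriality already established. For cell~(2), writing $(i')^{*}_{0}\colon M(X-Y')\to M(Y_0)(n)[2n]$ for the untwisted Gysin morphism, the quadruple $\big((i')^{*}_{0},\,i^{*},\,k^{*},\,(i')^{*}\big)$ is a morphism from the localisation triangle of $Y'\hookrightarrow X$ to that of $Z\hookrightarrow Y$ twisted by $(n)[2n]$: its square over the first arrow commutes by functoriality of the Gysin morphism under the open immersion $X-Y'\hookrightarrow X$ (the transversal case of~\ref{prop:gysin_transversal}), its square over the second arrow is cell~(1), and because this morphism of triangles can be produced geometrically --- out of the Gysin morphism with support $M_Z(X)\to M_Z(Y)(n)[2n]$ used above and of open‑immersion functoriality --- its remaining square, which is cell~(2), commutes as well; equivalently, cell~(2) follows from the identification $M_{Y\cup Y'}(X)\cong M(Y)(n)[2n]\times_{M(Z)(d)[2d]}M(Y')(m)[2m]$ (homotopy pullback along the Gysin morphisms), a Mayer--Vietoris statement for the possibly singular $Y\cup Y'$ which again yields to the normal‑cone deformation. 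For cell~(3), the octahedra of the flags $Z\subset Y\subset X$ and $Z\subset Y'\subset X$ factor $\partial_{X,Z}$ through $\partial_{Y,Z}$ and through $\partial_{Y',Z}$, and the Mayer--Vietoris triangle of $\{X-Y,X-Y'\}$ accounts for the passage from $X-Z$ to $X_0$; assembling these realises $\partial_{X_0,Y_0}\circ\partial_{Y,Z}$ and $\partial_{X_0,Y'_0}\circ\partial_{Y',Z}$ as one and the same boundary, up to the sign carried by the Mayer--Vietoris triangle --- the sign displayed in cell~(3).

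The hard part is the deformation step behind cell~(1): one must check that $D_ZX$ carries compatible deformations of the \emph{nested} immersions $Z\hookrightarrow Y\hookrightarrow X$ and $Z\hookrightarrow Y'\hookrightarrow X$, so that all the purity isomorphisms occurring in the two flags are put simultaneously into linear form; that the scheme‑theoretic identity $Y\cap Y'=Z$ persists in the special fibre as $N_ZY\cap N_ZY'=Z$; and that the Euler‑class computation is carried out over the (possibly non‑trivial) base $Z$, not over $\spec k$. What remains in cells~(2)--(3) is mostly bookkeeping: keeping the Tate twists and the triangulated signs coherent through the octahedra and the Mayer--Vietoris triangle, controlling the excess that appears because $Y$ and $Y'$ need not meet transversally, and verifying that the arrow written $l^{*}$ --- a priori a structural map of an octahedron --- is the honest Gysin morphism of $l$.
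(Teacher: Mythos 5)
You have the right architecture in outline (reduce to support‑refined morphisms, deform to a linear model, feed in Chern‑class input), but the step that is supposed to carry the whole weight of cell~(1) would not prove what is needed. After you pass to morphisms out of $M_Z(X)$ and transport along $D_ZX$, the objects you must compare are two specific morphisms $M_Z(N_ZX)\rightarrow M(Z)(d)[2d]$. Your Euler‑class computation, however, only compares their precompositions with the canonical map $M(N_ZX)\rightarrow M_Z(N_ZX)$: the identity $e(N_ZY)\,e(N_ZX/N_ZY)=e(N_ZX)=e(N_ZY')\,e(N_ZX/N_ZY')$ is an identity in $CH^d(Z)$, i.e.\ an identity of Thom classes restricted to the zero section. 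Unlike the projective completion, the map $M(N_ZX)\rightarrow M_Z(N_ZX)$ admits no splitting, and precomposition with it is far from injective on $\Hom(M_Z(N_ZX),M(Z)(d)[2d])$; in the extreme case $Z=\spec k$, $d>0$, that Hom group is $\ZZ$ while the Euler classes live in $CH^d(k)=0$, so the Whitney‑formula identity carries no information about the morphisms to be compared. This is exactly why the paper works with projective completions (so that $\mot P\rightarrow\motx P Z$ is a \emph{split} epimorphism by the projective bundle theorem) and performs a \emph{double} deformation over $\dten 2$, landing in $P=P_ZY\times_Z P_YX|_Z$, where the required identity of morphisms (not of cycle classes) can be checked out of the full motive and reduces to lemma \ref{lm:proj&Gysin_mot2}, proved via fundamental classes. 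Moreover, even transporting your refined composites along the single deformation presupposes a purity isomorphism for the flag $(X,Y,Z)$ (and $(X,Y',Z)$) together with its functoriality for the fibre inclusions; that is the bulk of the paper's construction of $\pur{X,Y,Y'}$ with properties (i)--(iii), which you label ``the hard part'' but do not supply, and without which the identification of the two refined composites with their linear models is unjustified.

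The deductions of cells~(2) and~(3) also rest on an invalid step: for a morphism of distinguished triangles, commutativity of two squares only guarantees that \emph{some} third map exists making the last square commute, not that the \emph{given} map $(i')^*$ does. You acknowledge this by invoking a geometric production of the morphism of triangles from a Gysin morphism with supports, but that production, and its compatibility with the actual Gysin and residue morphisms, is precisely the missing bi‑relative construction (the paper's nine‑term diagram for $\mot{X,Y,Y'}$ together with the compatibility (iii) and symmetry (ii) of $\pur{X,Y,Y'}$). Likewise the sign in cell~(3) is not supplied by the Mayer--Vietoris triangle of $\{X-Y,X-Y'\}$ but by the anticommuting corner of that nine‑term diagram, coming from the permutation isomorphism on $\ZZ[1]\otimes\ZZ[1]$ being $-1$. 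So the two points where the theorem's real content sits --- a functorial, symmetric purity isomorphism for triples, and an identity of \emph{morphisms} at the deformed model rather than of Chow classes --- are missing, and the Euler‑class argument as written would fail.
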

\noindent This theorem can be understood as follows: the commutativity of square (1) in fact gives the functoriality of the Gysin morphism (take $Y'=Z$) ; the commutativity of square (2) shows the Gysin triangle is functorial with respect to the Gysin morphism of a closed immersion.
 Finally the commutativity of square (3) reveals the differential nature of the residue morphism: 
 it can be seen as an analogue of the change of variable theorem 
 for computing the residue of differential forms.\footnote{In fact, 
 one can show that the residue morphisms of motives induces the usual residue 
 on differential forms via De Rham realization.}

More generally, our Gysin morphism is associated with any morphism between smooth $k$-schemes. 
We go from the case of closed immersions to that of projective morphisms by a nowadays classical method\footnote{A model for us was the pullback on Chow groups as defined by Fulton in \cite{Ful}.}. Using the projective bundle formulas for motives, one easily defines the Gysin morphism for the projection of a projective bundle. As any projective morphism $f$ can be factored as closed immersion $i$ followed by the projection of a projective bundle $p$, we can put:
$f^*=p^*i^*$. The key point is to show this definition is independant of the factorisation. Taking into account the theorem cited above, this reduces to prove that for any section $s$ of a the projection $p$, the following relation holds: $p^*s^*=1$. When the definition is correctly settled,
the main properties of the general Gysin morphism follows from the particular case of closed immersions. Let us summarize them for the reader:
\begin{itemize}
\item functorial nature (Prop. \ref{Gysin_funct}),
\item projection formula in the transversal case (Prop. \ref{trivial_proj_form_Gysin}),
\item excess intersection formula (Prop. \ref{excess}),
\item naturality of the Gysin triangle with respect to Gysin morphisms
  (Prop. \ref{Gysin_morph&Gysin_tri}).
\end{itemize}

To end this description of the motivic Gysin morphism,
 we come back to the point of view at the beginning of the introduction.
 It was told that the existence of this exceptional functoriality was a consequence
 of Poincar\'e duality. In the end of this work, we go on the reverse side: Poincar\'e duality
 is a consequence of the existence of the Gysin morphism\footnote{Though stated in a
 different language, this was already observed and used in \cite[XVIII]{SGA4}.}.
 In fact, we use the tensor structure on the category of mixed motives
 and construct duality  pairings for a smooth projective $k$-scheme $X$ of dimension $n$.
Let $p:X \rightarrow \spec k$ (resp. $\delta:X \rightarrow X \times_k X$)
be the canonical projection (resp. diagonal embedding) of $X/k$. 
We obtain duality pairings (cf Theorem \ref{duality})
\begin{align*}
\eta: \ & \ZZ \xrightarrow{p^*} M(X)(-n)[-2n]
 \xrightarrow{\delta_*} M(X)(-n)[-2n] \otimes M(X) \\
\epsilon: \ & M(X) \otimes M(X)(-n)[-2n]
 \xrightarrow{\delta^*} M(X) \xrightarrow{p_*} \ZZ.
\end{align*}
which makes $M(X)(-n)[-2n]$ a \emph{strong dual} of $M(X)$
 in the sense of Dold-Puppe (see Paragr. \ref{num:df_duality} for recall on this notion).
 This result implies the usual formulation of Poincar\'e duality:
 the motivic cohomology of $X$ is isomorphic to its motivic homology 
 via cap-product with a homological class, the \emph{fundamental class of $X/k$.}
 But this duality result holds more universally: any motive defines both a cohomology
 and a homology ; the previous duality statement is valid in this
 generalized setting.

The morality of this result is that
 the existence of the Gysin morphism is essentiallty equivalent 
 to Poincar\'e duality when one restricts to projective smooth schemes over $k$
 (we left the precise statement to the reader).

\subsubsection*{Brief description of the organization of the paper}

Section 1 is concerned with the Gysin triangle associated with a closed immersion.
Sections 1.1, 1.2 and 1.3 contains reminders on the articles \cite{Deg3} and \cite{Deg5bis},
 concerning both the definitions and the results.
 Section 1.4 contains the proof of the main theorem of this paper, as stated above.

In section 2, we develop the general Gysin morphism: 
 section 2.1 contains essentially the proof that the definition explained above 
 is independant of the choice of the factorization, 
 section 2.2 states and proves the properties listed above.
 In the end of section 2.2, we also relate our Gysin morphism in the
 case of finite \'etale covers with the transfers one gets using
 the theory of finite correspondences (see Prop. \ref{prop:Gysin=transpose}).
 Section 2.3 explores duality as explained above,
 and shows how one can deduce a natural construction of a motive with compact support.

\subsubsection*{Further background and references}

Gysin morphisms for motives were already constructed by M.~Levine within his
 framework of mixed motives in \cite{Lev}.\footnote{Recall Levine showed an equivalence
 of triangulated monoidal categories between his category of mixed motives and the
 one of Voevodsky under the assumption of resolution of singularities.}
 The treatment of Levine has common feature with ours. In comparison, our principal contribution
 consists in the formula involving residues, together with the excess intersection formula.
The construction of Gysin morphism on cohomology --
 which follows from its existence on motives through realization --
 was also treated directly by Panin in the setting of oriented cohomologies.
 In his setting, Panin does not consider residue morphisms.

This work has been available as a preprint for a long time.\footnote{
It first appears on the preprint server of the LAGA in 2005.} 
It has been used in \cite{BVK} by Barbieri-Viale and Kahn about questions of duality. 
Ivorra refers to it in \cite{Ivo} mainly concerning \emph{motivic fundamental classes}
 (Def. \ref{df:motivic_fdl_class} here). Our initial interest for the Gysin morphism
 was motivated by the some computation in the coniveau filtration at the level
 of motives ; we refer the reader to \cite{Deg11} in this book for this subject.

We have extended the considerations of the present paper in a more general setting in \cite{Deg8}:
 the base can be arbitrary
 and we work in an abstract setting which allows to consider both motives and $MGL$-modules
 -- the latter corresponds to generalized oriented cohomologies, see \emph{loc. cit.}
 for details.
 The present version is still useful as the proof are much simpler.
Let us mention also the fundamental work \cite{Ayoub} of Ayoub on cross functors.
It yields Gysin morphisms through a classical procedure (dating back to \cite{SGA4}).
However, one has to take care about questions of orientation
which are not treated by Ayoub (aka Thom isomorphisms). This is done in
\cite{CD3}. On the other hand,
 the excess intersection formula, as well as formulas involving residues do not
 follow directly from the 6 functors formalism but from the analysis done here.

A final word concerning Poincar\'e duality: it was well known that strong duality
 for motives of smooth projective $k$-schemes was a consequence of the construction by
 Voevodsky of a $\otimes$-functor from Chow motives to geometric motives
 (see \cite[chap. 5, 2.1.4]{FSV}).
 On the other hand, our direct proof of duality shows the existence of this functor
 (see Remark \ref{duality&Chow})
 without using the theory of Friedlander and Lawson on moving cycles
  (\cite{FL}).\footnote{Explicitly:
 the proof of Prop. 2.1.4 of \cite[chap. 5]{FSV} refers to \cite[chap. 4, 7.1]{FSV}
 which uses in particular \cite[chap. 4, 6.3]{FSV} whose proof is a reference
 to \cite{FL}.} Let us mention also that the new idea in our definition of the motive
 with compact support of a smooth $k$-scheme is that the Gysin morphism of the diagonal 
 allows to construct a comparison functor from the motive with compact support to
 the usual motive (see property (iv) after Def. \ref{df:motif_c})
 -- this idea was already used in \cite{CD1}. Compared to other versions of
 motive with compact support, one by Voevodsky in \cite[chap. 5, \textsection 4]{FSV} 
 and the other by Huber-Kahn in \cite[app. B]{HK}, 
 ours allows one to bypass the
 assumptions of resolution of singularities for some of the fundamental properties.

\section*{Acknowledgments}

This paper grew out of a non published part of my thesis
 and I want to thank my thesis director F.~Morel 
 for several useful discussions.
 I also want to thank D.C.~Cisinski, B.~Kahn and J.~Wildeshaus
 for useful exchanges and encouragement during the writing of this
 article. Special thanks go to C.~Weibel and U.~Jannsen for pointing
 to me the sign issue in the formula numbered (3) in the introduction.
 Finally, I want to thank the referee for comments
 which have helped me to clarify and improve the present redaction.

\section*{Notations and conventions}

We fix a base field $k$ which is assumed to be perfect. 
The word scheme will stand for any
separated $k$-scheme of finite type, and we will say that a scheme is
smooth when it is smooth over the base field. 
The category of smooth schemes
is denoted by $\lis$.
Throughout the paper, when we talk about 
the codimension of a closed immersion,
the rank of a projective bundle 
or the relative dimension of a morphism,
we assume it is constant.

Given a vector bundle $E$ over $X$, and $P$ the associated projective
 bundle with projection $p:P \rightarrow X$, 
 we will call \emph{canonical line bundle} on $P$ the canonical invertible sheaf
 $\lambda$ over $P$ characterized by the property that $\lambda \subset p^{-1}(E)$.
 Similarly, we will call \emph{canonical dual line bundle} on $P$
 the dual of $\lambda$.

We say that a morphism is \emph{projective} if it admits a factorization
 into a closed immersion followed by the projection of a projective
 bundle.\footnote{Beware this is not the convention of \cite{EGA2} unless
 the aim of the morphism admits an ample line bundle.}

We let $\dmgm$ (resp. $\dmgme$) be the category of geometric motives 
(resp. effective geometric motives) introduced in \cite[chap. 5]{FSV}.
For the result of section 1, we work in the category $\dmgme$.
If $X$ is a smooth scheme, we denote by $\mot X$ the effective motive 
 associated with $X$ in $\dmgme$.
From section 2 to the end of the article, we work in the category
 $\dmgm$. Then $M(X)$ will be the motive associated with $X$ in
 the category $\dmgm$ 
 (through the canonical functor $\dmgme \rightarrow \dmgm$).

For a morphism $f:Y \rightarrow X$ of smooth schemes, we will simply put
$f_*=\mot f$. Moreover for any integer $r$, 
we sometimes put $\ZZ\,\dtwist r=\ZZ(r)[2r]$ in large diagrams.
When they are clear from the context (for example in diagrams), 
 we do not indicate twists or shifts on morphisms.

\tableofcontents

\section{The Gysin triangle}

\subsection{Relative motives}

\begin{df}
\label{df:pair}
We call closed (resp. open) pair any couple $(X,Z)$ (resp. $(X,U)$)
such that $X$ is a smooth scheme and $Z$ (resp. $U$) is a closed
(resp. open) subscheme of $X$. 

Let $(X,Z)$ be an arbitrary closed pair. We will say $(X,Z)$ is
 smooth if $Z$ is smooth. For an integer $n$, we will say
 that $(X,Z)$ has codimension $n$
 if $Z$ has (pure) codimension $n$ in $X$.

A morphism of open or closed pairs $(Y,B) \rightarrow (X,A)$
is a couple of morphisms $(f,g)$ which fits into the commutative diagram
of schemes
$$
\xymatrix@=10pt{
B\ar@{^(->}[r]\ar_g[d] & Y\ar^f[d] \\
A\ar@{^(->}[r] & X.
}
$$
If the pairs are closed, we also require that this square
 is topologically cartesian\footnote{\emph{i.e.} cartesian 
 as a square of topological spaces ; in other words,
 $B_{red}=(A \times_X Y)_{red}$.}.

We add the following definitions~:
\begin{itemize}
\item The morphism $(f,g)$ is said to be cartesian if the above square
is cartesian as a square of schemes.
\item A morphism $(f,g)$ of closed pairs is said to be excisive if
$f$ is {\'e}tale and $g_{red}$ is an isomorphism.
\item A morphism $(f,g)$ of smooth closed pairs is said to be transversal
 if it is cartesian and the source and target have the
 same codimension.
\end{itemize}
\end{df}

We will denote conventionally open pairs as fractions $(X/U)$.

\begin{df}
\label{rel_motiv}
Let $(X,Z)$ be a closed pair. We define the relative motive 
$\motx X Z$ --- sometimes denoted by $\mot{X/X-Z}$ ---
associated with $(X,Z)$ to be the class in $\dmgme$ of the complex
$$
... \rightarrow 0 \rightarrow [X-Z] \rightarrow [X]
 \rightarrow 0 \rightarrow ...
$$ 
\noindent where $[X]$ is in degree $0$.
\end{df}

Relative motives are functorial with respect to morphisms of closed pairs.
In fact, $\motx X Z$ is functorial with respect to
morphisms of the associated open pair $(X/X-Z)$. For example,
if $Z \subset T$ are closed subschemes of $X$, we get a morphism
$\motx X T \rightarrow \motx X Z$.

If $j:(X-Z) \rightarrow X$ denotes the complementary open immersion,
 we obtain a canonical distinguished triangle in $\dmgme$~:
\begin{equation}
\label{eq:gysin_beta}
\mot{X-Z} \xrightarrow{j_*} \mot X
 \rightarrow \motx X Z \rightarrow \mot{X-Z}[1].
\end{equation}

\begin{rem}
The relative motive in $\dmgme$ defined here
corresponds under the canonical embedding to
 the relative motive in $\dmm$ defined in \cite[def. 2.2]{Deg3}.
\end{rem}
 
The following proposition sums up the basic properties 
of relative motives.
It follows directly from \cite[1.3]{Deg3} using the previous remark.
Note moreover that in the category $\dmgme$, each property is 
rather clear, except \ppexc which follows from the embedding
theorem \cite[chap. 5, 3.2.6]{FSV} of Voevodsky.
\begin{prop} \label{properties_motx}
Let $(X,Z)$ be a closed pair. 
The following properties of relative motives hold:
\begin{enumerate}
\item[\ppred] \emph{Reduction}: If we denote by
 $Z_0$ the reduced scheme associated with $Z$ then: 
$$\motx{X}{Z}=\motx{X}{Z_0}.$$
\item[\ppexc] \emph{Excision}: If $(f,g):(Y,T) \rightarrow (X,Z)$ is
an excisive morphism then $(f,g)_*$ is an isomorphism.
\item[\ppmv] \emph{Mayer-Vietoris}~: If $X=U \cup V$ is an open
 covering of $X$ then we obtain a canonical distinguished triangle
 of shape:
\begin{align*}
\motx {U \cap V}{Z \cap U \cap V}
& \xrightarrow{\mot{j_U}-\mot{j_V}}
   \motx{U}{Z \cap U} \oplus \motx{V}{Z \cap V} \\
&  \xrightarrow{\mot{i_U}+\mot{i_V}} \motx{X}{Z}
 \longrightarrow \motx {U \cap V}{Z \cap U \cap V}[1].
\end{align*}
The morphism $i_U$, $i_V$, $j_U$, $j_V$ stands for the obvious
cartesian morphisms of closed pairs induced by the corresponding
canonical open immersions.
\item[\ppadd] \emph{Additivity}: Let $Z'$ be a closed subscheme of
$X$ disjoint from $Z$. Then the morphism induced by the inclusions
$$
\motx{X}{Z \sqcup Z'} \rightarrow \motx{X}{Z} \oplus \motx{X}{Z'}
$$
is an isomorphism.
\item[\pphtp] \emph{Homotopy}: Let $\pi:(\dtex X,\dtex Z)
\rightarrow (X,Z)$ denote the cartesian morphism induced by the
projection. Then $\pi_*$ is 
an isomorphism.
\end{enumerate}
\end{prop}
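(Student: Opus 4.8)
The plan is to verify the five properties directly inside $\dmgme$, which by construction is the pseudo-abelian envelope of the localisation of $K^b(\lisc)$ at the homotopy relations $[\dtex X]\to[X]$ and the Zariski Mayer--Vietoris relations $[U\cap V]\to[U]\oplus[V]\to[X]$. The advantage of working there is that anything which reduces to those two families of relations is purely formal, so that the only property with genuine content is the excision axiom. (One could instead transport the five statements through the fully faithful embedding $\dmgme\hookrightarrow\dmm$ and invoke \cite{Deg3}[1.3] together with the remark identifying the two notions of relative motive; I shall in any case use that device for \ppexc below.)

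First the three elementary properties. For \ppred one observes that $X-Z$ and $X-Z_{red}$ are literally the same open subscheme of $X$, so the defining complexes $[X-Z]\to[X]$ and $[X-Z_{red}]\to[X]$ of \eqref{eq:gysin_beta} are identical. For \pphtp one uses that $\dtex X-\dtex Z=\dtex{X-Z}$, so $\mot{\dtex X,\dtex Z}$ is the cone of $\mot{\dtex{X-Z}}\to\mot{\dtex X}$; the homotopy-invariance isomorphisms $\mot{\dtex W}\xrightarrow{\sim}\mot W$ of $\dmgme$ are natural in $W$, hence compatible with the open immersion $X-Z\hookrightarrow X$, and passing to cones exhibits $\pi_*$ as an isomorphism. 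For \ppmv one checks that the three-term complex of relative motives $\mot{U\cap V,Z\cap U\cap V}\to\mot{U,Z\cap U}\oplus\mot{V,Z\cap V}\to\mot{X,Z}$ is, as a complex over $K^b(\lisc)$, an iterated cone of the two absolute Mayer--Vietoris complexes --- that of the cover $\{U,V\}$ of $X$ and that of the cover $\{U\cap(X-Z),V\cap(X-Z)\}$ of $X-Z$ --- both of which lie in the thick subcategory killed in forming $\dmgme$; hence the relative complex is zero there, i.e. the displayed triangle is distinguished. Property \ppadd is of the same nature: when $Z_1,Z_2$ are disjoint, $\{X-Z_1,X-Z_2\}$ is a Zariski cover of $X$ (their complements cover because $Z_1\cap Z_2=\varnothing$), so \ppmv applies, and one checks that the natural projections $\mot{X,Z_1\sqcup Z_2}\to\mot{X,Z_i}$ admit sections built from the excision isomorphisms $\mot{X,Z_i}\cong\mot{X-Z_{3-i},Z_i}$ --- Zariski excision being itself a consequence of \ppmv --- which gives the direct-sum decomposition.

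The remaining property \ppexc carries the real content. For an excisive $(f,g)\colon(Y,T)\to(X,Z)$ one has $Y\times_X(X-Z)=Y-T$, these two open subschemes having the same support; hence the square with corners $Y-T$, $Y$, $X-Z$, $X$ is cartesian, with $f$ {\'e}tale, $X-Z\hookrightarrow X$ an open immersion, and $f$ inducing an isomorphism on the reduced closed complement $Z_{red}$ --- which is precisely the hypothesis that $g_{red}$ be an isomorphism. Thus the square is an elementary Nisnevich distinguished square, and $(f,g)_*$ is the induced morphism of cones $\mot{Y,T}\to\mot{X,Z}$. This is a Nisnevich, not a Zariski, phenomenon, so it is not formal from the relations defining $\dmgme$; the plan is to deduce it from Voevodsky's embedding theorem \cite{FSV}[chap.~5, 3.2.6]. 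That theorem makes $\dmgme\hookrightarrow\dmm$ fully faithful and triangulated, and in $\dmm$, built out of Nisnevich sheaves with transfers, an elementary Nisnevich distinguished square induces a homotopy cocartesian square of motives by Nisnevich descent for the representable sheaves $\ZZ_{tr}(-)$; hence $(f,g)_*$ is an isomorphism in $\dmm$, and by full faithfulness already in $\dmgme$. The one obstacle worth singling out is therefore this single appeal to Nisnevich descent through the embedding theorem; everything else amounts to routine manipulation of cones, the only mild nuisances being the two Mayer--Vietoris bookkeepings.
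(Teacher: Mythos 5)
Your proof is correct and follows essentially the route the paper itself indicates: it observes that each property is direct in $\dmgme$ from the defining relations (homotopy and Zariski Mayer--Vietoris), except \ppexc, which is obtained exactly as you do, via Voevodsky's embedding theorem \cite{FSV}[chap.~5, 3.2.6] and Nisnevich descent in $\dmm$. You merely spell out the "rather clear" verifications that the paper delegates to \cite{Deg3}[1.3], so there is no substantive difference in method.
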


\subsection{Purity isomorphism} \label{sec:pur}

\begin{num} \label{motivic_chern}
Consider an integer $i \geq 0$.
Recall that the $i$-th twisted motivic complex over $k$ is defined
 according to Voevodsky as
 \emph{Suslin's singular simplicial complex}
 of the cokernel of the natural map of sheaves with
 transfers $\ZZ^{tr}(\dten i-0) \rightarrow \ZZ^{tr}(\dten i)$,
 shifted by $2i$ degrees on the left
 (cf \cite{SV} or \cite{FSV}).
Motivic cohomology of a smooth scheme $X$ in degree $n \in \ZZ$ and twists $i$
is defined following Beilinson's idea as the Nisnevich hypercohomology groups
of this complex which we denote by $\cohm n i X$.
Moreover,
 there is a natural pairing of complexes
  $\ZZ(i) \otimes \ZZ(j) \rightarrow \ZZ(i+j)$
 (cf \cite{SV}) which induces the product on motivic cohomology.

Recall there exists\footnote{\label{note:iso_V}
Following Voevodsky, 
this isomorphism is obtained from the Nisnevich hypercohomology spectral sequence
of the complex $\ZZ(i)$ once we have observed that
$\HH^q(\ZZ(i))=0$ if $q>i$ and $\HH^i(\ZZ(i))$ is canonically isomorphic with
the $i$-th Milnor unramified cohomology sheaf $\mathcal K_i^M$. 
The compatibility
with product and pullback then follows from a careful study (cf for example 
\cite[8.3.4]{Deg1}).}
a canonical isomorphism
\begin{equation} \label{motivic_coh&Chow}
\epsilon_X:CH^i(X) \xrightarrow{\ \sim \ } \cohm {2i} i X
\end{equation}
which is functorial with respect to pullbacks and compatible with products.

According to \cite[chap. 5, 3.2.6]{FSV}, we also get an isomorphism
\begin{equation}\label{cohm_in_DMgm}
\cohm n i X \simeq \mor{\dmgme}{\mot X}{\ZZ(i)[n]}
\end{equation}
where $\ZZ(i)$ on the right hand side stands (by the usual abuse of notation)
for the $i$-th Tate geometric motive.
In what follows, we will identify cohomology classes in motivic
cohomology with morphisms in $\dmgme$ according to this isomorphism.

Thus cup-product on motivic cohomology corresponds
 to a product on morphisms that we describe now.
Let $X$ be a smooth scheme, 
$\delta:X \rightarrow X \times_k X$ be the diagonal embedding
and $f:\mot X \rightarrow \mathcal M$, $g:\mot X \rightarrow \mathcal N$ 
be two morphisms with target a geometric motive.
We define the \emph{exterior product} of $f$ and $g$, 
denoted by $f \ecuppx X g$ or simply $f \ecupp g$, as the composite
\begin{equation}\label{eq:df:ecupp}
\mot X \xrightarrow{\delta_*} \mot X \otimes \mot X
 \xrightarrow{f \otimes g} \mathcal M \otimes \mathcal N.
\end{equation}
In the case where $\mathcal M=\ZZ(i)[n]$, $\mathcal N=\ZZ(j)[m]$,
identifying the tensor product 
$\ZZ(i)[n] \otimes \ZZ(j)[m]$ with $\ZZ(i+j)[n+m]$
by the canonical isomorphism, the above product corresponds exactly 
to the cup-product on motivic cohomology.

According to the isomorphism \eqref{motivic_coh&Chow},
motivic cohomology admits Chern classes.
Thus, applying the isomorphism \eqref{cohm_in_DMgm},
we attach to any vector bundle $E$ on a smooth scheme $X$
and any integer $i \geq 0$,
 the following morphism in $\dmgme$
\begin{equation} \label{Chern_motivic}
\chern i E:\mot X \rightarrow \ZZ(i)[2i]
\end{equation}
which corresponds under the preceding isomorphisms
 to the $i$-th Chern class of $E$ in the Chow group.
 For short, we call this morphism the \emph{$i$-th motivic Chern class}
 of $E$.
\end{num}

\begin{rem} \label{rem:Chern_classes_products&pullbacks}
According to our construction, any formula in the Chow group
involving pullbacks and intersections of Chern classes induces
a corresponding formula for the morphisms of type \eqref{Chern_motivic}.
\end{rem}

\begin{num}
\label{proj_bdle_iso}
We finally recall the projective bundle theorem 
(cf \cite[chap. 5, 3.5.1]{FSV}).
Let $P$ be a projective bundle of rank $n$ over a smooth scheme $X$,
$\lambda$ its canonical dual line bundle
and $p:P \rightarrow X$ the canonical projection.
The projective bundle theorem of Voevodsky says that the morphism
\begin{equation} \label{eq:chom&proj_bdl_iso}
\mot P \xrightarrow{\sum_{i \leq n} {\chern 1 \lambda}^i \ecupp p_*}
 \bigoplus_{i=0}^n \mot X\dtwist i
\end{equation}
is an isomorphism.

Thus, we can associate with $P$ a family of split monomorphisms indexed
by an integer $r \in [0,n]$ corresponding to the decomposition of its
motive~:
\begin{equation} \label{eq:lefschetz4proj}
\lef r {P}:M(X)(r)[2r] \rightarrow
 \oplus_{i \leq n} M(X)(i)[2i]
 \rightarrow \mot P.
\end{equation}
The following lemma will be a key point in the theory of the Gysin morphism:
\end{num}
\begin{lm} \label{lm:key_computation_cycle_class}
Consider the notations introduced above.

Let $x \in CH^n(P)$ be a cycle class
 and $x_i \in CH^{n-i}(X)$ be cycle classes such that
\begin{equation} \label{eq:Chow&proj_bdl}
x=\sum_{i=0}^n p^*(x_i).c_1(\lambda)^i.
\end{equation}
Consider an integer $i \in[0,n]$ and the following morphisms in $\dmgme$
\begin{align*}
\mathfrak x&:\mot X \rightarrow \ZZ(n)[2n] \\
\mathfrak x_i&:\mot X \rightarrow \ZZ(n-i)[2(n-i)]
\end{align*}
associated respectively with $x$ and $x_i$ through the isomorphisms
 \eqref{motivic_coh&Chow} and \eqref{cohm_in_DMgm}.

Then we get the equality of morphisms $\mot X(i)[2i] \rightarrow \ZZ(r)[2r]$
 in $\dmgme$:
$$
\mathfrak x \circ \lef i P=\mathfrak x_i(i)[2i].
$$
\end{lm}
\begin{proof}
Taking care of Remark \ref{rem:Chern_classes_products&pullbacks},
 the equality \eqref{eq:Chow&proj_bdl} induces the following
 equality of morphisms $\mot P \rightarrow \ZZ(r)[2r]$:
$$
\mathfrak x=\sum_{i=0}^r \mathfrak c_1(\lambda)^i \ecupp (\mathfrak x_i \circ p_*)
=\sum_{i=0}^r \big\lbrack\mathfrak x_i(i)[2i]\big\rbrack  \circ \mathfrak c_1(\lambda)^i \ecupp p_*.
$$
The second equality follows from the definition of the exterior cup product
 (formula \eqref{cohm_in_DMgm}). Thus, the definition of $\lef i P$
 and the formula \eqref{eq:chom&proj_bdl_iso}
 for the projective bundle isomorphism on motives 
 allow to conclude.
\end{proof}

\begin{rem} \label{rem:particular_cancel}
Note in particular that we deduce from the preceding lemma the following
 weak form of the cancellation theorem of Voevodsky \cite{V2}:
 for any smooth scheme $X$ and
  any non negative integers $(n,i)$ such that $i \leq n$, the morphism
\begin{align*}
\Hom_{\dmgme}(\mot X,\ZZ(n-i)[2(n-i)])
 &\rightarrow \Hom_{\dmgme}(\mot X(i)[2i],\ZZ(n)[2n]), \\
\phi &\mapsto \phi(i)[2i]
\end{align*}
is an isomorphism.
\end{rem}

\begin{lm} \label{lm:Thom&Proj_split}
Let $X$ be a smooth scheme and $E/X$ be a vector bundle.
Consider the projective completion $P$ of $E/X$,
 the closed pair $(P,X)$ corresponding to the canonical section of $P/X$
 and the complementary open immersion $j:U \rightarrow P$.
Then the distinguished triangle \eqref{eq:gysin_beta} 
associated with $(P,X)$
\begin{equation} \label{eq:Thom&Proj_split}
\mot U \xrightarrow{j_*} \mot P \xrightarrow{\pi_P} \motx P X
 \rightarrow \mot U[1]
\end{equation}
is split.
\end{lm}
\begin{proof}
Recall $P=\PP(E \oplus \dtex X)$. Let $\nu:\PP(E) \rightarrow P$
 be the embedding associated with the monomorphism of vector bundles
 $E \rightarrow E \oplus \dtex X$.
 The closed immersion $\nu$ factors through the open immersion
  $j:U \rightarrow P$.
 Let us denote finally by $L$ the canonical line bundle on $\PP(E)$
 and by $s_0$ its zero section.
 Then, according to \cite[\textsection 8]{EGA2}, there exists an isomorphism
 of schemes
 $\epsilon:L \rightarrow U$ such that the following diagram commutes:
$$
\xymatrix@=20pt{
L\ar^{\epsilon}[r] & U\ar^j[d] \\
\PP(E)\ar^-\nu[r]\ar^{s_0}[u] & P.
}
$$
Thus the morphism $j_*$ is isomorphic in $\dmgme$ to the morphism
$$
\nu_*:\mot{\PP(E)} \rightarrow \mot P
$$
which is a split monomorphism according to the 
respective projective bundle isomorphisms for $\PP(E)/X$ and $P/X$.
\end{proof}
%Remark also that, when $P$ is the projective completion of
% a vector bundle $E$ of rank $n$ over $X$, the triangle \eqref{eq:gysin_beta}
% associated with the closed pair $(P,X)$ corresponding to the zero section of $E/X$
%$$
%M(P-X) \rightarrow M(P) \xrightarrow{\pi_P} M_X(P) \xrightarrow{+1}
%$$
%is canonically split according to the projective bundle theorem
% and the fact that $P-X$ has a canonical structure of a line bundle
% over $\PP(E)$ such that the canonical immersion $\PP(E) \rightarrow (P-X)$
% corresponds to the zero section (cf \cite[8.6.4]{EGA2}).

\num \label{def_diag}
Consider a smooth closed pair $(X,Z)$.
Let $N_ZX$ (resp. $B_ZX$) be the normal bundle (resp. blow-up) of $(X,Z)$
and $P_ZX$ be the projective completion of $N_ZX$.
We denote by $B_Z(\dtex X)$
the blow-up of $\dtex X$ with center $\{0\} \times Z$.
It contains as a closed subscheme the trivial blow-up 
$\dtex Z=B_Z(\dtex Z)$. 
We consider the closed pair $(B_Z(\dtex X),\dtex Z)$ over 
$\dte$. Its fiber over $1$ is the closed pair $(X,Z)$
and its fiber over $0$ is $(B_ZX \cup P_ZX,Z)$.
Thus we can consider the following deformation diagram~:
\begin{equation} \label{1st_def_diag}
(X,Z) \xrightarrow{\bar \sigma_1} (B_Z(\dtex X),\dtex Z)
 \xleftarrow{\bar \sigma_0} (P_ZX,Z).
\end{equation}
This diagram is functorial in $(X,Z)$ with respect
to cartesian morphisms of closed pairs. Note finally that,
on the closed subschemes of each closed pair,
$\bar \sigma_0$ (resp. $\bar \sigma_1$) is the $0$-section
(resp. $1$-section) of $\dtex Z/Z$.

The existence statement in the following proposition appears
already in \cite[2.2.5]{Deg5bis} but the uniqueness statement is
new~:
\begin{prop}
\label{prop:purity}
Let $n$ be a natural integer.

There exists a unique family of isomorphisms of the form
$$
\pur{X,Z}:\motx X Z \rightarrow M(Z)(n)[2n]
$$
indexed by smooth closed pairs of codimension $n$ such that~:
\begin{enumerate}
\item for every cartesian morphism $(f,g):(Y,T) \rightarrow (X,Z)$ of
smooth closed pairs of codimension $n$, the following diagram is
commutative~:
$$
\xymatrix@R=20pt@C=40pt{
\motx Y T\ar^{(f,g)_*}[r]\ar_{\pur{Y,T}}[d]
 & \motx X Z\ar^{\pur{X,Z}}[d] \\
M(T)(n)[2n]\ar^-{g_*(n)[2n]}[r] &  M(Z)(n)[2n].
}
$$
\item Let $X$ be a smooth scheme and $P$ be the projective completion
of a vector bundle $E/X$ of rank $n$.
Consider the closed pair $(P,X)$ corresponding to the $0$-section of $E/X$. 
Then $\pur{P,X}$ is the inverse of the following morphism
$$
M(X)(n)[2n]
 \xrightarrow{\lef n P} \mot P
 \xrightarrow{\pi_P} \motx P X.
$$
where $\lef n P$ is the monomorphism of \eqref{eq:lefschetz4proj}
 and $\pi_P$ is the epimorphism
 of the split distinguished triangle \eqref{eq:Thom&Proj_split}.
\end{enumerate}
\end{prop}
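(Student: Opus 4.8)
The plan is to deduce both existence and uniqueness from the deformation-to-the-normal-cone diagram \eqref{1st_def_diag} together with the properties of relative motives in Proposition \ref{properties_motx} and the projective bundle theorem recalled in \ref{proj_bdle_iso}. First I would recall the construction of $\pur{X,Z}$ from \cite[2.2.5]{Deg5bis}: applying $M$ to the two cartesian morphisms of closed pairs in \eqref{1st_def_diag}, one gets morphisms $\motx X Z \to M(B_Z(\dtex X),\dtex Z) \leftarrow \motx{P_ZX}{Z}$, and both are isomorphisms — the left one because $\bar\sigma_1$ is the $1$-section and hence a homotopy equivalence by \pphtp applied to the deformation pair over $\dte$, the right one by the same argument with the $0$-section. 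Composing the inverse of the second with the first, and then with the isomorphism $\motx{P_ZX}{Z} \xrightarrow{\sim} M(Z)(n)[2n]$ coming from the projective bundle decomposition (item (2) of the statement, in the special case $E = N_ZX$), defines $\pur{X,Z}$. Functoriality of the whole diagram \eqref{1st_def_diag} against cartesian morphisms, noted just after its introduction, together with the compatibility of the projective bundle isomorphism with pullbacks (see \ref{motivic_chern}), gives property (1); property (2) holds essentially by construction once one checks that for a projective completion $P = P_ZX$ of a bundle, the deformation space $B_Z(\dtex P)$ retracts compatibly onto $P$ itself, i.e. the two sections $\bar\sigma_0,\bar\sigma_1$ induce the identity on $\motx P X$ after the normalization.

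For uniqueness — which is the genuinely new content — suppose $\pur{-,-}$ and $\pur{-,-}'$ are two such families. First I would argue that property (2) pins down the isomorphism on the single pair $(P_ZX, Z)$ attached to any smooth closed pair $(X,Z)$: indeed $P_ZX$ is the projectivization of the vector bundle $N_ZX$ over $Z$, with $Z$ embedded by the $0$-section, so $(P_ZX,Z)$ is exactly of the form covered by (2) and hence $\pur{P_ZX,Z} = \pur{P_ZX,Z}'$. Then I would use property (1) applied to the two legs of \eqref{1st_def_diag}. Writing $D = (B_Z(\dtex X),\dtex Z)$ and using that $\bar\sigma_0,\bar\sigma_1$ are cartesian morphisms of codimension-$n$ smooth closed pairs, property (1) for each of the two families gives commutative squares relating $\pur{X,Z}$ (resp. $\pur{X,Z}'$) to $\pur{D}$ (resp. $\pur D'$) and $\pur{P_ZX,Z}$ (resp. its prime version). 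Since $(\bar\sigma_1)_*$ and $(\bar\sigma_0)_*$ are isomorphisms on relative motives, one can solve: $\pur{X,Z} = \big(g_*(n)[2n]\big)^{-1} \circ \pur{P_ZX,Z} \circ (\bar\sigma_0)_* \circ (\bar\sigma_1)_*^{-1}$, where the right-hand side involves only data common to both families. The same identity holds for the prime version, and since $\pur{P_ZX,Z}=\pur{P_ZX,Z}'$ we conclude $\pur{X,Z}=\pur{X,Z}'$.

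The main obstacle I anticipate is the bookkeeping around property (1) for the deformation morphisms $\bar\sigma_0,\bar\sigma_1$: these are cartesian morphisms of closed pairs but the ambient schemes $B_Z(\dtex X)$ and $P_ZX$ live over $\dte$, not over $k$ in the naive sense, so one must be careful that \pphtp applies in the relative setting (it does, since $\dtex X \to X$ is the projection and the deformation pair sits inside it) and that the $0$- and $1$-sections of $\dtex Z/Z$ really are the morphisms $g$ appearing in the square of property (1) with $g_* = \mathrm{id}_{M(Z)}$. A secondary subtlety is verifying property (2) for the constructed family: one needs that applying the deformation construction to the already-completed bundle $P = \prj{E}$ yields a diagram compatible with the tautological projective bundle splitting, which amounts to an explicit but unenlightening identification of $B_X(\dtex{P})$ near its fibers over $0$ and $1$; I would phrase this as a lemma and reduce it to the functoriality of \ref{proj_bdle_iso} rather than computing in coordinates.
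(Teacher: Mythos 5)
Your central justification for the key step---that $\bar\sigma_{0*}$ and $\bar\sigma_{1*}$ induce isomorphisms on relative motives ``by \pphtp applied to the deformation pair over $\dte$''---is not valid, and this is a genuine gap. Property \pphtp of proposition \ref{properties_motx} only concerns the cartesian morphism $(\dtex X,\dtex Z)\rightarrow (X,Z)$ induced by the projection. The deformation pair $(B_Z(\dtex X),\dtex Z)$ is a blow-up of $\dtex X$ along $\{0\}\times Z$, not a pullback of $(X,Z)$, and $\bar\sigma_0,\bar\sigma_1$ are sections over $0$ and $1$ of a family which is not constant; nothing in \pphtp (or in the other properties of \ref{properties_motx} quoted alone) applies to it. The statement that $\bar\sigma_{0*}$ and $\bar\sigma_{1*}$ are isomorphisms is exactly the hard content of the purity/deformation-to-the-normal-cone theorem, proved in \cite{Deg5bis}, section 2.2 (or in \cite{FSV}, chap.~5, 3.5.4) via excision, Mayer--Vietoris and local triviality of the closed immersion. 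So your existence sketch does not prove existence; it silently assumes it. Since the paper itself delegates existence to \cite{Deg5bis}, the correct move is to cite that construction rather than claim it follows from homotopy invariance.

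The same faulty step is the one you invoke in the uniqueness argument (``since $(\bar\sigma_1)_*$ and $(\bar\sigma_0)_*$ are isomorphisms on relative motives, one can solve\ldots''). The skeleton of your uniqueness proof is the paper's, and it can be repaired without appealing to the deformation theorem at all: apply property (1) to the two cartesian morphisms $\bar\sigma_1$ and $\bar\sigma_0$ of \eqref{1st_def_diag}; in the resulting squares the vertical maps are isomorphisms because the family $\pur{\cdot}$ is \emph{assumed} to consist of isomorphisms, while the bottom maps $s_{1*},s_{0*}:M(Z)(n)[2n]\rightarrow M(\dtex Z)(n)[2n]$, induced by the $1$- and $0$-sections of $\dtex Z/Z$, are isomorphisms by genuine homotopy invariance. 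Hence $\bar\sigma_{1*}$ and $\bar\sigma_{0*}$ are forced to be isomorphisms, and one solves $\pur{X,Z}=s_{1*}^{-1}\circ s_{0*}\circ\pur{P_ZX,Z}\circ\bar\sigma_{0*}^{-1}\circ\bar\sigma_{1*}$, where the right-hand side is pinned down by condition (2) applied to $(P_ZX,Z)$, as you correctly observe. Note also that your displayed formula is mis-composed: $(\bar\sigma_0)_*$ maps $\motx {P_ZX} Z$ \emph{into} the relative motive of the deformation space, so it must occur inverted, and the map you call $g_*(n)[2n]$ is $s_{1*}$ (an isomorphism, but not literally the identity); with these corrections, and with the invertibility of $\bar\sigma_{0*},\bar\sigma_{1*}$ obtained as above (or by citing the deformation theorem), your uniqueness argument coincides with the paper's.
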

\begin{proof}
Uniqueness~: Consider a smooth closed pair
$(X,Z)$ of codimension $n$.

Applying property (1) to the deformation diagram \eqref{1st_def_diag},
 we obtain the commutative diagram~:
$$
\xymatrix@R=30pt{
\mot{X,Z}\ar^-{\bar \sigma_{1*}}[r]\ar_{\pur{X,Z}\!}[d]
 & \mot{B_Z(\dtex X),\dtex Z}\ar|{\phantom{\big(}}[d]
 \ar@{}|{\pur{B_Z(\dtex X),\dtex Z}}[d]
 & \mot{P_ZX,Z}\ar^{\pur{P_ZX,Z}}[d]
                           \ar_-{\bar \sigma_{0*}}[l] \\
M(Z)(n)[2n]\ar^-{s_{1*}}[r]
 & M(\dtex Z)(n)[2n]
 & M(Z)(n)[2n]\ar_-{s_{0*}}[l] 
}
$$
Using homotopy invariance, $s_{0*}$ and $s_{1*}$ are isomorphisms. 
Thus in this diagram, all the morphisms are
isomorphisms. Now, the second property of the purity isomorphisms
determines uniquely $\pur{P_ZX,Z}$, thus $\pur{X,Z}$ is
also uniquely determined.

For the existence part, we refer the reader to \cite{Deg5bis},
section 2.2. 
\end{proof}

\begin{rem}
The second point of the above proposition appears as a
normalization condition. It will be reinforced later
 (cf Remark \ref{strong_normalization}).
\end{rem}

\begin{df}
\label{df:gysin_triangle}
Let $(X,Z)$ be a smooth closed pair of codimension $n$. 
Denote by $j$ (resp. $i$) the open immersion
$(X-Z) \rightarrow X$ (resp. closed immersion $Z \rightarrow X$).

With the notation of the preceding proposition,
 the morphism $\pur{X,Z}$ 
 will be called the \emph{purity isomorphism} associated with $(X,Z)$.
 
Using this isomorphism, we deduce from the
distinguished triangle \eqref{eq:gysin_beta} the following
distinguished triangle in $\dmgme$,
 called the Gysin triangle of $(X,Z)$
$$
\mot{X-Z} \xrightarrow{j_*} \mot X \xrightarrow{i^*}
 M(Z)(n)[2n] \xrightarrow{\partial_{X,Z}} \mot{X-Z}[1].
$$
The morphism $\partial_{(X,Z)}$ (resp. $i^*$)
is called the \emph{residue} (resp. \emph{Gysin morphism})
associated with $(X,Z)$ (resp. $i$). Sometimes we use the notation
$\partial_i=\partial_{(X,Z)}$.
\end{df}

\begin{ex}
\label{Gysin_proj_bdle}
Consider a smooth scheme $X$ and a vector bundle $E/X$ of rank $n$. 
Let $P$ be the projective completion of $E$, 
 $\lambda$ be its canonical dual invertible sheaf
  and $p:P \rightarrow X$ be its canonical projection.
Consider the canonical section 
$s:X \rightarrow P$ of $P/X$.

We define the Thom class of $E$ in $CH^n(P)$ as the class
$$
t(E)=\sum_{i=0}^n p^*(c_{n-i}(E)).c_1(\lambda)^i.
$$
It corresponds according to paragraph \ref{motivic_chern}
 to a morphism
 $\mathfrak t(E):\mot P \rightarrow \ZZ(n)[2n]$.

Consider the notations of Lemma \ref{lm:Thom&Proj_split}
 together with the definition of the exterior product
 \eqref{eq:df:ecupp}.
 By definition of Chern classes,
  the restriction of the class $t(E)$ to $\PP(E)$
  is zero. Because the canonical map $\PP(E) \rightarrow U$
  is a homotopy equivalence\footnote{See the argument in the proof
  of Lemma \ref{lm:Thom&Proj_split}.},
  we get that $j^*(t(E))=0$.
 Thus, as the triangle \eqref{eq:Thom&Proj_split} is split,
 the morphism
$$
\mathfrak t(E) \ecuppx P p_*:\mot P \rightarrow \mot X(n)[2n]
$$
factors uniquely through $\pi_P$:
$$
\mot P \xrightarrow{\pi_P} \motx P X
 \xrightarrow{\epsilon_P} \mot X(n)[2n].
$$
Because the coefficient of $c_1(\lambda)^n$ in $t(E)$ is $1$,
 we deduce from Lemma \ref{lm:key_computation_cycle_class}
 that $\epsilon_P \circ \pur{P,X}^{-1}=1$.
Thus, according to the previous definition,
 we obtain the following formula\footnote{
This is the analog of the well-known formula in Chow theory:
for any cycle class $x \in CH^*(Z)$,
$s_*(x)=t(E).p^*(x)$.}:
\begin{equation} \label{eq:Thom&Gysin}
s^*=\mathfrak t(E) \ecuppx P p_*.
\end{equation}
\end{ex}

\begin{rem}
\label{Gysin&Voevodsky}
Our Gysin triangle agrees with that of \cite{FSV}, chap. 5,
prop. 3.5.4. Indeed, in the proof of 3.5.4, Voevodsky constructs an
isomorphism which he denotes by $\alpha_{(X,Z)}$. He then uses it
 as we use the purity isomorphism to construct his triangle. 
It is not hard to check that
this isomorphism $\alpha_{(X,Z)}$ satisfies the two conditions of Proposition
\ref{prop:purity} and thus coincides with the purity isomorphism 
from the uniqueness statement.
\end{rem}

\subsection{Base change formulas} \label{sec:base_change}

This subsection is devoted to recall some results we obtained previously
in \cite{Deg3} and \cite{Deg5bis} about the following type of
morphism~:
\begin{df}
\label{refined_morphism}
Let $(X,Z)$ (resp. $(Y,T)$) be
 a smooth closed pair of codimension $n$ (resp. $m$). 
Let $(f,g):(Y,T) \rightarrow (X,Z)$ be a morphism of closed pairs.

We define the morphism $(f,g)_!$ as the following composite~:
$$
M(T)(m)[2m] \xrightarrow{\ \pur{Y,T}^{-1}\ } \mot{Y,T}
 \xrightarrow{(f,g)_*} \mot{X,Z} \xrightarrow{\ \pur{X,Z}\ }
  M(Z)(n)[2n].
$$
\end{df}

In the situation of this definition, let $i:Z \rightarrow X$
 and $k:T \rightarrow Y$ be the obvious closed embeddings
 and $h:(Y-T) \rightarrow (X-Z)$ be the restriction of $f$.
Then we obtain from our definitions the following commutative diagram~: 
\begin{equation}
\begin{split}
 \label{eq:refined_formulas}
\xymatrix@R=22pt@C=22pt{
\mot{Y-T}\ar[r]\ar[d]
 & \mot Y\ar^-{j^*}[r]\ar_{f_*}[d]\ar@{}|/-8pt/{_{(1)}}[rd]
 & M(T)(m)[2m]\ar^-{\partial_{Y,T}}[r]\ar|{(f,g)_!}[d]
       \ar@{}|/-8pt/{_{(2)}}[rd]
 & \mot{Y-T}[1]\ar^{h_*}[d] \\
\mot{X-Z}\ar[r]
 & \mot X\ar^-{i^*}[r]
 & M(Z)(n)[2n]\ar^-{\partial_{X,Z}}[r] & \mot{X-Z}[1] }
\end{split}
\end{equation}
The commutativity of square \scriptsize (1) \normalsize corresponds
to a \emph{refined projection formula}. The word refined is inspired
by the terminology ``refined Gysin morphism'' of Fulton in
\cite{Ful}. By contrast, the commutativity of square \scriptsize
(2) \normalsize involves motivic cohomology rather than Chow groups.

%\begin{rem}
%Using properties (Exc) and (Add) of Proposition \ref{properties_motx},
%one can easily see that the study of the morphism $(f,g)_!$ can
%be reduced to the case where $Z$ and $T$ are integral.
%\end{rem}

\begin{num} \label{num:e_thick}
Let $T$ (resp. $T'$) be a	closed subscheme of
a scheme $Y$ with defining ideal $\mathcal J$
 (resp. $\mathcal J'$).
We will say that a closed immersion $i:T \rightarrow T'$ is
 an \emph{exact thickening of order $r$ in $Y$}
 if $\mathcal J'=\mathcal J^r$.
We recall to the reader the following formulas
 obtained in \cite[3.1, 3.3]{Deg3}~:
\end{num}
\begin{prop}
\label{prop:gysin_transversal}
Let $(X,Z)$ and $(Y,T)$ be smooth closed pairs of codimension $n$ and
$m$ respectively. Let $(f,g):(Y,T) \rightarrow (X,Z)$ be a morphism of
closed pairs.
\begin{enumerate}
\item \emph{(Transversal case)} If $(f,g)$ is transversal (which implies $n=m$)
 then
 
$(f,g)_!=g_*(n)[2n]$.
\item \emph{(Excess intersection)} If $(f,g)$ is cartesian, we put
  $e=n-m$ and $\xi=g^*N_ZX/N_TY$. Then 
  $(f,g)_!=\chern e \xi \ecuppx T  g_*(m)[2m]$.
\item \emph{(Ramification case)} If $n=m=1$ and the canonical
closed immersion $T \rightarrow Z \times_X Y$ is an exact thickening 
of order $r$ in $Y$, then $(f,g)_!=r.g_*(1)[2]$.
\end{enumerate}
\end{prop}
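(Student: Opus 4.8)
The formulas are those of \cite[3.1, 3.3]{Deg3}, and the plan is to recall how each of them follows from the characterisation of the purity isomorphism (Proposition \ref{prop:purity}), its normalisation on projective completions, and the computation of Example \ref{Gysin_proj_bdle}; by (Red) and (Add) of Proposition \ref{properties_motx} one first reduces throughout to the case where $Z$ and $T$ are integral. Item (1) is then immediate from the definitions: by Definition \ref{refined_morphism} we have $(f,g)_! = \pur{X,Z} \circ (f,g)_* \circ {\pur{Y,T}}^{-1}$, and when $n=m$ and $(f,g)$ is cartesian both pairs are smooth closed pairs of codimension $n$, so property (1) of Proposition \ref{prop:purity} applies to $(f,g)$ and says precisely that $\pur{X,Z}\circ(f,g)_* = \big(g_*(n)[2n]\big)\circ\pur{Y,T}$, whence $(f,g)_! = g_*(n)[2n]$.

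For item (2) the plan is to reduce to a linear model by deformation to the normal bundle and then to split off the transversal part. Since $(f,g)$ is cartesian, $T = Z\times_X Y$ scheme-theoretically, so the surjection of conormal sheaves $g^*\mathcal C_{Z/X}\twoheadrightarrow\mathcal C_{T/Y}$ exhibits $N_TY$ as a subbundle of $g^*N_ZX$ with locally free quotient $\xi$ of rank $e$. The deformation diagram \eqref{1st_def_diag} is functorial with respect to cartesian morphisms, and, arguing as in the uniqueness part of Proposition \ref{prop:purity} (homotopy invariance making the deformation maps isomorphisms on all the relevant relative motives), one identifies $(f,g)_!$ with the analogous invariant of the induced morphism of pairs $(P_T N_TY, T)\to(P_Z N_ZX, Z)$, which is again cartesian — the preimage of the zero section being cut out precisely by $\mathcal C_{T/Y}$ — and for which both ambient purity isomorphisms are the normalised ones of Proposition \ref{prop:purity}(2). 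That induced map factors through $g^*N_ZX = N_ZX\times_Z T$, giving a factorisation of morphisms of pairs $(P_T N_TY, T)\xrightarrow{(\iota,\mathrm{id})}(P_T(g^*N_ZX), T)\xrightarrow{(q,g)}(P_Z N_ZX, Z)$ in which $(q,g)$ is cartesian of constant codimension $n$ — hence $(q,g)_! = g_*(n)[2n]$ by item (1) — and $\iota$ is the closed immersion of projective completions attached to $N_TY\hookrightarrow g^*N_ZX$ over $T$. As $(\cdot)_!$ is compatible with composition, it remains to check $(\iota,\mathrm{id})_! = \chern{e}{\xi}\ecupp\mathrm{id}_{\mot T}\dtwist m$, and here I would invoke Example \ref{Gysin_proj_bdle} to rewrite the two purity isomorphisms as the maps induced on relative motives by $\mathfrak t(N_TY)\ecupp p_*$ and $\mathfrak t(g^*N_ZX)\ecupp p_*$, and their inverses through the split monomorphisms $\lef{r}{P}$ of \ref{proj_bdle_iso}; feeding this in turns $(\iota,\mathrm{id})_!$ into $\big(\mathfrak t(g^*N_ZX)\vert_{P_T N_TY}\ecupp p_*\big)\circ\lef{m}{P_T N_TY}$, and the multiplicativity of Thom classes along a subbundle, $\mathfrak t(g^*N_ZX)\vert_{P_T N_TY} = \chern{e}{\xi}\cupp\mathfrak t(N_TY)$ (with $\xi$ pulled back), together with the usual projection-formula manipulation of the projective bundle decomposition, yields the asserted formula.

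For item (3), where $n=m=1$, the same reduction is followed by a degree computation on $\PP^1$. Working Zariski-locally on $X$ and $Y$ and gluing by (MV), one may take $Z\subset X$ and $T\subset Y$ principal; deformation to the normal bundle replaces $(X,Z)$ by $(P_Z L, Z)$ for the line bundle $L = N_ZX$, and likewise for $(Y,T)$, while the exact-thickening hypothesis becomes the statement that the induced map on normal line bundles identifies $g^*N_ZX$ with $N_TY^{\otimes r}$; after trivialising, $\bar f$ is the morphism of $\PP^1$-bundles which is fibrewise $z\mapsto z^r$, the residual linear automorphism being absorbed by item (1) or a $\dtez$-homotopy. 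One then notes that the endomorphism $z\mapsto z^r$ of $\PP^1$ fixes a rational point and pulls a point back to $r$ times a point, so that it is multiplication by $r$ on $\chow{1}{\PP^1} = \mor{\dmgme}{\mot{\PP^1}}{\ZZ(1)[2]}$, hence multiplication by $r$ on the summand $\ZZ(1)[2]$ of $\mot{\PP^1}$; reading this through the normalisation (2) of Proposition \ref{prop:purity} — now via the Gysin morphisms of the two sections, as in Example \ref{Gysin_proj_bdle} — gives $(f,g)_! = r\cdot g_*(1)[2]$.

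The main obstacle is item (2): one must make the deformation reduction genuinely compatible with the construction $(\cdot)_!$ — not merely with the individual purity isomorphisms — and then carry through the Thom-class bookkeeping while keeping careful track of the twist by the canonical line bundle in the normal bundle of a projective completion, all of this without invoking the general functoriality of Gysin morphisms (theorem \ref{thm:assoc}), on pain of circularity. A secondary difficulty, in item (3), is the clean removal of the unit in the local trivialisation of $\bar f$: no $r$-th root need exist, so one genuinely needs a homotopy (or transversality) argument at that point.
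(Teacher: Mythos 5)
First, a point of comparison: the paper gives no proof of this proposition at all --- it is explicitly recalled from \cite[3.1, 3.3]{Deg3} --- so your attempt can only be judged on its own terms. Your treatment of case (1) is correct and genuinely immediate: with $(f,g)_!$ defined as $\pur{X,Z}\circ(f,g)_*\circ\pur{Y,T}^{-1}$, the cartesian, equal-codimension hypothesis is exactly the situation of property (1) of proposition \ref{prop:purity}, and the formula follows. Your outline for case (2) is also a reasonable route: the deformation diagram \eqref{1st_def_diag} \emph{is} functorial for cartesian morphisms, purity is compatible with the horizontal comparison maps by \ref{prop:purity}(1), the sections $s_0,s_1$ are invertible by homotopy invariance, and the terminal Thom-class computation (restriction of $\mathfrak t(g^*N_ZX)$ to $\PP(N_TY\oplus 1)$ equals $\chern e \xi$ times $\mathfrak t(N_TY)$) together with example \ref{Gysin_proj_bdle} and the normalisation \ref{prop:purity}(2) does produce the excess factor; what remains is routine but real bookkeeping (identification of the excess bundle through the deformation and naturality of $\chern e \xi \ecupp g_*$ under the section maps), and you avoid theorem \ref{thm:assoc}, so no circularity.

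Case (3), however, has a genuine gap. Your reduction rests on deforming \emph{both} pairs to their normal bundles ``compatibly with $f$'', but the functoriality of the deformation diagram stated in \ref{def_diag} holds only for \emph{cartesian} morphisms of closed pairs, and the ramification case is precisely the non-cartesian one: there is in general no morphism $B_T(\dtex Y)\rightarrow B_Z(\dtex X)$ at all, since the pullback to $B_T(\dtex Y)$ of the ideal $(t,\mathcal I)$ of $\{0\}\times Z$ is $(t,\mathcal J^r)$, which is not invertible when $r\geq 2$ (in a chart where $\mathcal J=(u)$, $t=ut'$, it is $u\cdot(t',u^{r-1})$). So the comparison square needed to transport $(f,g)_!$ to the linear model simply does not exist as you use it, and the hypothesis cannot be ``linearised'' naively (the induced map on conormal sheaves is zero for $r\geq 2$; one only gets $g^*(\mathcal I/\mathcal I^2)\simeq(\mathcal J/\mathcal J^2)^{\otimes r}$). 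The second weak point is the local-to-global step: an equality of two morphisms $M(T)(1)[2]\rightarrow M(Z)(1)[2]$ in $\dmgme$ cannot be checked after restriction to a Zariski cover --- \ppmv gives distinguished triangles, not injectivity of restriction on Hom-groups --- so ``working Zariski-locally and gluing by (MV)'' does not by itself yield the identity, and the same objection applies to absorbing the residual unit by homotopy after local trivialisation. Finally, note that the hypothesis is an \emph{exact} thickening (the paper only defines ``thickening'' here and refers to \cite{Deg5bis}, def.~2.4.7 for the exact notion); your argument never uses the exactness, which is a sign that the reduction to the fibrewise $z\mapsto z^r$ model is where the actual content of \cite{Deg3} lies and where your sketch does not yet constitute a proof.
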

Note that each case of the above proposition gives,
 via the commutative Diagram \eqref{eq:refined_formulas}, two formulas: 
 one involving Gysin morphisms
 and the other one involving the residues. 
 When we will apply this proposition, we will always refer to one these
 two formulas.

\begin{rem}
In the article \cite[4.23]{Deg8},
 the case (3) has been generalized to any codimension $n=m$. 
In this generality,
the integer $r$ is simply the geometric multiplicity of $Z \times_X Y$
 -- when assumed to be connected.
\end{rem}

\begin{cor} \label{cor:Gysin&open_closed}
Let $X$ be a smooth scheme such that $X=X_1 \sqcup X_2$.
Consider the open and closed immersion
 $\nu_i:X_i \rightarrow X$ for $i=1,2$.

Then the isomorphism 
$(\nu_{1*},\nu_{2*}):\mot{X_1} \oplus \mot{X_2} \rightarrow \mot X$
admits as an inverse isomorphism the map 
$(\nu_{1}^*,\nu_{2}^*):\mot X \rightarrow \mot{X_1} \oplus \mot{X_2}$.
\end{cor}
\begin{proof}
In fact, according to the first point of the above proposition,
 we get the following relations for $i=1,2$:
$\nu_i^*\nu_{i*}=1$, $\nu_{2-i}^*\nu_{i*}=0$.
This, together with the fact $(\nu_{1*},\nu_{2*})$ is an isomorphism,
 allows to conclude.
\end{proof}

Another application of the preceding proposition 
 is the following projection formula:
\begin{cor}
\label{cor:proj_gysin}
Let $(X,Z)$ be a smooth pair of codimension $n$ and $i:Z
\rightarrow X$ be the corresponding closed immersion.

Then, $(1_Z \ecupp_Z i_*) \circ i^*=i^* \ecupp_X 1_X:
\mot X \rightarrow \mot Z \otimes \mot X(n)[2n]$.
\end{cor}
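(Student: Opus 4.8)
The plan is to deduce this projection formula directly from the transversal case of Proposition \ref{prop:gysin_transversal} by choosing the right cartesian morphism of closed pairs. The natural candidate is the diagonal-type morphism $(f,g)$ obtained by multiplying the pair $(X,Z)$ by $X$: take $(Y,T) = (X\times_k X, Z\times_k X)$ with its obvious projection $p:X\times_k X\to X$ and the product closed immersion $i\times 1$, mapping down to $(X,Z)$ via the \emph{first} projection. This is a cartesian morphism of smooth closed pairs of the same codimension $n$, so part (1) gives $(p,\mathrm{pr})_! = (\mathrm{pr}_Z)_*(n)[2n]$ where $\mathrm{pr}_Z : Z\times_k X\to Z$ is the first projection. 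I would then feed this identity into the commutative diagram displayed after Definition \ref{refined_morphism}: the square labelled $(1)$ there says that $(f,g)_!$ commutes with the Gysin morphisms $i^*$ up to $f_*$, which is exactly a projection-formula shape once $f$ is taken to be the diagonal $\Delta_X:X\to X\times_k X$ rather than a projection.

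Concretely, the key step is to recognize that $i^*\ecupp_X 1_X$ unfolds, by the definition of the exterior product $\ecupp_X$ in terms of $\Delta_{X*}$, as the composite $\mot X\xrightarrow{\Delta_{X*}}\mot X\otimes\mot X\xrightarrow{i^*\otimes 1}\mot Z(n)[2n]\otimes\mot X$, and similarly the left-hand side $(1_Z\ecupp_Z i_*)\circ i^*$ unfolds via $\Delta_{Z*}$ on $\mot Z$. So the statement becomes the commutativity of a square relating $\Delta_{X*}$, $\Delta_{Z*}$, $i^*$, and $(i\times 1)^*$. First I would set up the cartesian square with horizontal maps the diagonals $\Delta_Z,\Delta_X$ and vertical maps $i, i\times 1$ — this is cartesian because $Z\to X$ is a monomorphism — and apply the \emph{base change} part, namely the compatibility of the Gysin morphism with cartesian squares. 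The cleanest tool is the commutative square $(1)$ (refined projection formula) applied to $(f,g) = (\Delta_X, \Delta_Z): (X, Z)\to (X\times_k X, Z\times_k X)$: since this is cartesian and codimension-preserving, $(f,g)_! = (\Delta_Z)_*(n)[2n]$ by Proposition \ref{prop:gysin_transversal}(1), and square $(1)$ then reads $(\Delta_Z)_*(n)[2n]\circ i^* = (i\times 1)^*\circ (\Delta_X)_*$. Post-composing both sides with $\mathrm{pr}_{X*}$ in the second factor (or rather with the appropriate projection onto the factor we want to keep) and chasing through the definitions of $\ecupp_Z$ and $\ecupp_X$ yields the claimed identity.

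I would then just need to check that the two unfoldings match on the nose, i.e. that the composite $\mot X\xrightarrow{i^*}\mot Z(n)[2n]\xrightarrow{\Delta_{Z*}}\mot Z(n)[2n]\otimes\mot Z$, after pushing the second $\mot Z$ forward to $\mot X$ via $i_*$, agrees with $\Delta_{X*}$ followed by $i^*\otimes 1$; this is precisely the content of the base-change square above together with functoriality of $M(-)$ applied to $i:Z\to X$ in the second variable. The main obstacle — really the only subtle point — is bookkeeping the Tate twists and shifts and making sure the exterior product $\ecupp_Z$ on the \emph{target} $\mot Z\otimes\mot X(n)[2n]$ is formed with $\Delta_{Z*}$ and not inadvertently with $\Delta_{X*}$; once the cartesian square of diagonals is in place, everything is formal and follows from Proposition \ref{prop:gysin_transversal}(1) and the refined projection formula (square $(1)$). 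No genuinely new geometric input is needed beyond what is already recalled.
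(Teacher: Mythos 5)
Your route is the paper's route: apply the transversal case of Proposition \ref{prop:gysin_transversal} to the cartesian morphism $(X,Z) \rightarrow (X \times_k X, Z \times_k X)$ induced by the diagonal of $X$, and read off square $(1)$ of the diagram following Definition \ref{refined_morphism}, which gives exactly
$g_*(n)[2n] \circ i^* = (i \times 1_X)^* \circ \Delta_{X*}$ with $g=(1_Z,i):Z \rightarrow Z\times_k X$; unfolding the left-hand side via the definition of $\ecupp_Z$ gives $(1_Z \ecupp_Z i_*)\circ i^*$. (Your opening detour through the projection $(X\times_k X, Z\times_k X)\rightarrow (X,Z)$ is unused, and no post-composition with $\mathrm{pr}_{X*}$ is needed; the displayed identity is already the statement once the definitions are unfolded.)

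There is, however, one genuine gap: to conclude you must identify the right-hand side with $i^* \ecupp_X 1_X=(i^*\otimes 1)\circ \Delta_{X*}$, i.e.\ you need $(i\times 1_X)^* = i^*\otimes 1$. You assert this follows from ``the base-change square above together with functoriality of $M(-)$,'' but that is circular: the base-change square is what produced the term $(i\times 1_X)^*$ in the first place, and functoriality of $M$ says nothing about this wrong-way map. The morphism $(i\times 1_X)^*$ is defined through the purity isomorphism $\pur{X\times X,\,Z\times X}$, so the needed statement is a compatibility of the purity isomorphism with taking the product by $X$ (equivalently, that the deformation space of $(X\times X, Z\times X)$ is that of $(X,Z)$ times $X$, or an appeal to the uniqueness characterization of Proposition \ref{prop:purity} for the family $(X,Z)\mapsto \pur{X\times X, Z\times X}$). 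This is precisely the point the paper does not treat as formal and instead cites \cite{Deg5bis}, prop.\ 2.6.1 for; your proof needs either that reference or an explicit argument of this kind to be complete.
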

\begin{proof}
Just apply point (1) of the proposition to the
cartesian morphism $(X,Z) \rightarrow (X \times X,Z \times
X)$ induced by the diagonal embedding of $X$. The only thing left to
check is that $(i \times 1_X)^*=i^* \otimes 1$, which was done in
\cite[2.6.1]{Deg5bis}.
\end{proof}

\begin{rem}
In the above statement, we have loosely identified
the motive
$\mot Z \otimes \mot X(n)[2n]$ with
$(\mot Z(n)[2n]) \otimes \mot X$ through the canonical isomorphism. 
This will not have any consequences in the present article.
On the contrary in \cite{Deg5bis},
 we must be attentive to this isomorphism which may result
  in a change of sign (cf remark 2.6.2 of {\it loc. cit.}).
\end{rem}

Another corollary of the preceding proposition is the following
 analog of the self-intersection formula:
\begin{cor}
Let $(X,Z)$ be a smooth closed pair of codimension $n$
 with normal bundle $N_ZX$.
If $i$ denotes the corresponding closed immersion,
 we obtain the following equality:
$$
i^*i_*=\mathfrak c_n(N_ZX) \ecuppx Z 1_{Z*}.
$$
\end{cor}
\noindent Indeed it follows from the transversal case of
 the preceding proposition applied to the 
 cartesian morphism $(i,1_Z):(Z,Z) \rightarrow (X,Z)$ and
 from the commutativity of square $(1)$
  in diagram \eqref{eq:refined_formulas}.

\begin{ex}
Consider a vector bundle $p:E \rightarrow X$ of rank $n$.
Let $s_0$ be its zero section.
According to the
homotopy property in $\dmgme$, we get $s_{0*} p_*=1$.
Thus, the preceding corollary applied to $s_0$ implies
the following formula:
\begin{equation} \label{eq:Euler&Gysin}
s_0^*=\mathfrak c_n(p^{-1}E) \ecuppx E p_*.
\end{equation}
Moreover, the Gysin triangle associated with $s_0$
 together with the isomorphism $s_{0*}$
 gives the following distinguished triangle:
$$
\mot{E^\times} \longrightarrow \mot E
 \xrightarrow{\mathfrak c_n(E) \ecuppx X 1_{X*}} M(X)(n)[2n]
 \xrightarrow{\partial_{E,X} \circ s_{0*}} \mot{E^\times}[1]
$$
where $E^\times$ is the complement of the zero section.
Following a classical terminology,
 we call it the \emph{Euler triangle} of $E/X$.\footnote{
It is the analog of the Euler long exact sequence 
associated with $E/X$ in cohomology.}
\end{ex}

\begin{df} \label{df:motivic_fdl_class}
Let $(X,Z)$ be a smooth closed pair of codimension $n$
 and $i:Z \rightarrow X$ be the corresponding closed immersion.
Let $\pi:Z \rightarrow \spec k$ be the structural morphism of $Z$.

We define the \emph{motivic fundamental class} of $Z$ in $X$
 as the following composite map:
$$
\eta_X(Z):\mot X \xrightarrow{i^*} \mot Z(n)[2n]
 \xrightarrow{\pi_*} \ZZ(n)[2n].
$$
\end{df}

\begin{ex}
Let $X$ be a smooth scheme 
 and $p:E \rightarrow X$ be a vector bundle of rank $n$.
According to formula \eqref{eq:Euler&Gysin},
 the motivic fundamental class of the zero section of $E/X$ is:
\begin{equation} \label{equation:fdl_class&Euler_class}
\eta_E(X)=\mathfrak c_n(p^{-1}E).
\end{equation}
Let $P/X$ be the projective completion of $E/X$.
According to formula \eqref{eq:Thom&Gysin},
 the motivic fundamental class of the canonical section of $P/X$ is:
\begin{equation}\label{equation:fdl_class&Thom_class}
\eta_P(X)=\mathfrak t(E).
\end{equation}
\end{ex}

\begin{rem}
If we use the cancellation theorem of Voevodsky
 (see \cite{V2} or use more directly Remark \ref{rem:particular_cancel}),
 the Gysin map $i^*$ induces a canonical pushout\footnote{
 We prove in \cite[lem. 3.3]{Deg9} that this pushout coincides
 through the isomorphism \eqref{motivic_coh&Chow} with the usual
 pushout in Chow theory.}:
$$
i_*:\cohm s t Z \rightarrow \cohm{s+2n} {t+n} X.
$$
Then, through the isomorphism \eqref{cohm_in_DMgm},
 we get the equality $\eta_X(Z)=i_*(1)$, where $1$ stands
 for the unit of the (bigraded) cohomology ring $\cohm * * Z$.
 This motivates our terminology.
\end{rem}

According to the computations of the previous example, 
 the following lemma
 is a generalization of formulas \eqref{eq:Thom&Gysin}
 and \eqref{eq:Euler&Gysin}:
\begin{lm} \label{lm:Gysin&fdl_class_retraction}
Let $(X,Z)$ be a smooth closed pair of codimension $n$
 and $i:Z \rightarrow X$ be the corresponding closed immersion.
 Assume that $i$ admits a retraction $p:X \rightarrow Z$.

Then $i^*=\eta_X(Z) \ecuppx X p_*$.
\end{lm}
\begin{proof}
Let $\pi:Z \rightarrow \spec k$ be the structural morphism.
According to formula \eqref{eq:df:ecupp}, we deduce that
 $\pi_* \ecuppx Z 1_{Z*}=1_{Z*}$.
The lemma follows from the following computation:
\begin{align*}
i^*&
\stackrel{(1)}=\lbrack \pi_* \ecuppx Z (p_*i_*) \rbrack \circ i^*
=(\pi_* \otimes p_*)(1_{Z*} \ecuppx Z i_*) \circ i^* 
\stackrel{(2)}=(\pi_* \otimes p_*)(i^* \ecuppx X 1_{Z*}) \\
&=\eta_X(Z) \ecuppx X p_*
\end{align*}
where equality $(1)$ is justified by the preceding remark and 
 the relation $pi=1_Z$ whereas equality (2) is in fact
 Corollary \ref{cor:proj_gysin}.
\end{proof}

\begin{lm} \label{lm:Gysin&fdl_class}
Let $X$ be a smooth scheme
 and $E/X$ be a vector bundle of rank $n$.
 Let $s$ (resp. $s_0$) be a section (resp. the zero section)
 of $E/X$. Assume that $s$ is transversal to $s_0$
 and consider the cartesian square:
$$
\xymatrix@=14pt{
Z\ar^i[r]\ar_k[d] & X\ar^s[d] \\
X\ar^{s_0}[r] & E
}
$$
Then the motivic fundamental class of $i$ is:
$$
\eta_X(Z)=\mathfrak c_n(E).
$$
\end{lm}
\begin{proof}
Let $\pi$ (resp. $\pi'$) be the structural morphism of $Z$ (resp. $X$).
The lemma follows from the computation below:
\begin{align*}
\eta_X(Z)=\pi_*i^*=\pi'_*k_*i^*
\stackrel{(1)}=\pi'_*s_0^*s_*
\stackrel{(2)}=\mathfrak c_n(p^{-1}E) \circ s_*
\stackrel{(3)}=\mathfrak c_n(E) \circ p_* \circ s_*=\mathfrak c_n(E).
\end{align*}
Equality (1) follows from Proposition \ref{prop:gysin_transversal},
 equality (2) from Formula \eqref{equation:fdl_class&Euler_class}
 and equality (3) from Remark \ref{rem:Chern_classes_products&pullbacks}.
\end{proof}

\begin{ex} \label{ex:excess&fdl_class}
Let $E/X$ be a vector bundle and
 $p:P \rightarrow X$ be its projective completion.
Let $\lambda$ be the canonical dual line bundle on $P$.
Put $F=\lambda \otimes_P p^{-1}(E)$ as a vector bundle over $P$.
According to our conventions,
 we get a canonical embedding $\lambda^\vee \subset p^{-1}(E \oplus \dtex X)$.
Then the following composite map
$$
\lambda^\vee \rightarrow p^{-1}(E \oplus \dtex X)
 \rightarrow p^{-1}(E)
$$
corresponds to a section $\sigma$ of $F/P$. 
One can check that $\sigma$ is transversal to the zero section $s_0^F$
of $F/P$ and that the following square is cartesian:
$$
\xymatrix@=14pt{
X\ar^s[r]\ar[d] & P\ar^\sigma[d] \\
P\ar^{s_0^F}[r] & F
}
$$
where $s$ is the canonical section of $P/X$.
Thus the preceding corollary gives the following equality:
$\eta_P(X)=\mathfrak c_n(F)$.\footnote{In fact,
 from the definition of the Thom class (Example \ref{Gysin_proj_bdle}),
 one can check directly the equality
 $c_n(F)=t(E)$ in the Chow group $CH^n(P)$:
 the computation we get in this example shows 
 that our (sign) conventions are coherent.}
\end{ex}

\subsection{Composition of Gysin triangles}

We first establish lemmas needed for the main theorem.
First of all, using the projection formula in the transversal
case (cf \ref{prop:gysin_transversal}) and the compatibility
of Chern classes with pullbacks,
 we easily obtain the following result:
\begin{lm}
\label{lm:proj&Gysin_mot}
Let $(Y,Z)$ be a smooth closed pair of codimension $m$ and $P/Y$ be
a projective bundle of dimension $n$. We put $V=Y-Z$ 
and consider the following cartesian squares~:
$$
\xymatrix@C=18pt@R=15pt{
P_V\ar^\nu[r]\ar_{p_{V}}[d] & P\ar_{p}[d]
 & P_Z\ar_\iota[l]\ar^{p_{Z}}[d] \\
V\ar^j[r] & Y & Z\ar_i[l]
}
$$
Finally, we consider the canonical line bundle $\lambda$
(resp. $\lambda_V$, $\lambda_Z$) on $P$ (resp. $P_V$,
$P_Z$).

Then, for any integer $r \in [0,n]$, the following diagram is 
commutative
$$
\xymatrix@R=36pt@C=24pt{
\mot{P_V}\ar^{\nu_*}[r]
    \ar@{}|{\chern 1 {\lambda_V}^r \ecupp p_{V*} \quad }[d]
    \ar|{\phantom{\big(}}[d]
 & \mot{P}\ar^-{\iota^*}[r]
    \ar@{}|{\chern 1 \lambda^r \ecupp p_* \quad }[d]
    \ar|{\phantom{\big(}}[d]
 & \mot{P_Z}\dtwist m\ar^-{\partial_\iota}[r]
    \ar@{}|{\chern 1 {\lambda_Z}^r \ecupp p_{Z*} \quad }[d]
    \ar|{\phantom{\big(}}[d]
 & \mot{P_V}[1]
    \ar@{}|{\chern 1 {\lambda_V}^r \ecupp p_{V*}[1]}[d]
    \ar|{\phantom{\big(}}[d] \\
\mot V\dtwist r\ar^-{j_*}[r]
 & \mot Y\dtwist r\ar^-{i^*}[r]
 & \mot Z\dtwist {r+m}\ar^-{\partial_i}[r]
 & \mot V\dtwist r[1].
}
$$
\end{lm}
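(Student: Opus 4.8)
The plan is to recognise the whole diagram as a morphism of Gysin triangles: its top row is the Gysin triangle of the closed immersion $\iota : P_Z \to P$ — of codimension $m$, since $P_Z = P \times_Y Z$ and $p$ is smooth — and its bottom row is the $(r)[2r]$-twist of the Gysin triangle of $i : Z \to Y$ (Definition \ref{df:gysin_triangle}). Because the vertical maps involve Chern classes, they are not directly of the form produced by the functoriality diagram following Definition \ref{refined_morphism}, so I would first split each of them. Using the identity $x \ecupp f = (1 \otimes f) \circ (x \ecupp 1_X)$, valid for $f : \mot X \to \mathcal M$ and a motivic cohomology class $x$ of $X$ (both sides being $(x \otimes f) \circ \Delta_{X*}$), applied over $P$, $P_V$ and $P_Z$, each vertical map $\chern 1 {\lambda_\bullet}^r \ecupp p_{\bullet *}$ factors through $\mot{P_\bullet}(r)[2r]$ as a twist of $p_{\bullet *}$ composed after cup-product with $\chern 1 {\lambda_\bullet}^r$.

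Thus the diagram of the lemma becomes the vertical composite of two morphisms of triangles. The first is the $(r)[2r]$-twist of the morphism of Gysin triangles $\mathrm{Gys}(P, P_Z) \to \mathrm{Gys}(Y, Z)$ attached to the cartesian morphism of smooth closed pairs $(p, p_Z) : (P, P_Z) \to (Y, Z)$, both of codimension $m$: this is exactly the commutative diagram following Definition \ref{refined_morphism}, whose third vertical arrow $(p, p_Z)_!$ is $p_{Z*}(m)[2m]$ by the transversal case of Proposition \ref{prop:gysin_transversal}. The second is a diagram $\mathrm{Gys}(P, P_Z) \to \mathrm{Gys}(P, P_Z)(r)[2r]$ whose vertical maps are cup-product with $\chern 1 \lambda^r$ on each term; since $\lambda$ restricts to $\lambda_V$ on $P_V = P - P_Z$ and to $\lambda_Z$ on $P_Z$ and Chern classes are compatible with pullbacks (\ref{motivic_chern}), these are cup-product with $\chern 1 {\lambda_V}^r$, $\chern 1 \lambda^r$, $\chern 1 {\lambda_Z}^r$ and $\chern 1 {\lambda_V}^r$ respectively, and the composite of the two reproduces the vertical maps of the lemma.

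It therefore remains to verify that this second diagram is a morphism of triangles. The square against the open pushforward $\nu_*$ follows from the naturality of the diagonal together with $\nu^* \chern 1 \lambda = \chern 1 {\lambda_V}$; the square against the Gysin morphism $\iota^*$ is the projection formula of Corollary \ref{cor:proj_gysin} together with $\iota^* \chern 1 \lambda = \chern 1 {\lambda_Z}$. The square against the residue $\partial_\iota$ is the one point that is not formal — it does not follow from the commutativity of the other two squares — and I would settle it by noting that the triangle \eqref{eq:gysin_beta} for $(P, P_Z)$, hence also the Gysin triangle of $(P, P_Z)$ after transport by the purity isomorphism, underlies a triangle of comodules over the coalgebra $\mot P$ via the relative diagonals (the same structure underlying Corollary \ref{cor:proj_gysin}); cup-product with any motivic cohomology class of $P$, in particular with $\chern 1 \lambda^r$, is then the corresponding comodule operation and so commutes with all three maps of the triangle, and under the purity isomorphism — which interchanges the $\mot P$-action with the $\mot{P_Z}$-action through restriction, again by Corollary \ref{cor:proj_gysin} — it acquires the form described above. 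I expect this residue square to be the only real difficulty; everything else is naturality together with the previously recalled transversal projection formula.
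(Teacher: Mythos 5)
Your reduction is sound as far as it goes, and it is essentially the route the paper intends (the proof is left to the reader with exactly the hint ``transversal case of \ref{prop:gysin_transversal} plus compatibility of Chern classes with pullbacks''): splitting each vertical map as cup-product with $\chern 1 \lambda^r$ followed by a twist of $p_{\bullet *}$, the pushforward stage is the recalled diagram after Definition \ref{refined_morphism} with $(p,p_Z)_!=p_{Z*}(m)[2m]$, and the squares of the Chern stage against $\nu_*$ and against $\iota^*$ are correctly handled. The gap is in the one square you yourself single out as non-formal, the one against $\partial_\iota$. Your argument for it rests on the assertion that the purity isomorphism $\pur{P,P_Z}$ interchanges the $\mot P$-coaction on $\motx P {P_Z}$ with cup-product by the restricted class on $\mot{P_Z}(m)[2m]$, ``again by Corollary \ref{cor:proj_gysin}''. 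That corollary does not give this: it is an identity of morphisms defined on $\mot P$, i.e.\ it only controls your comodule compatibility after precomposition with the canonical map $\mot P \rightarrow \motx P {P_Z}$, and this map is not an epimorphism in $\dmgme$; the two sides of the compatibility you need could a priori differ by a morphism factoring through $\mot{P_V}[1] \rightarrow \mot{P_Z}(r+m)[2(r+m)]$, which has no reason to vanish. The statement you actually need -- that the residue commutes with cup-product by classes restricted from the ambient scheme, equivalently that purity is a morphism of $\mot P$-comodules -- is a strong form of the projection formula whose proof requires its own deformation (or external-product) argument; it is of the same nature as the results recalled from \cite{Deg3} and \cite{Deg5bis}, not a formal consequence of Corollary \ref{cor:proj_gysin}. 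So as written, the only genuinely difficult square is left unproven.

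The repair, staying inside the paper's toolkit, is to make the Chern class external rather than internal: apply the recalled two-square diagram and the transversal case not to $(p,p_Z):(P,P_Z)\rightarrow(Y,Z)$ with a separate cup-product stage, but to the cartesian morphism of closed pairs $(P,P_Z)\rightarrow(P\times_k Y, P\times_k Z)$ induced by the graph $(1_P,p)$, both pairs being of codimension $m$. Its third vertical arrow is $(\iota,p_Z)_*(m)[2m]$ by the transversal case, and square $(2)$ of that diagram gives the compatibility with the residues. Then identify the Gysin triangle of $(P\times_k Y,P\times_k Z)$ with $\mot P \otimes$ the Gysin triangle of $(Y,Z)$ -- this is the external-product compatibility of \cite{Deg5bis}, prop.~2.6.1, the same input already invoked in the proof of Corollary \ref{cor:proj_gysin} -- and compose with $\chern 1 \lambda^r \otimes (-)$. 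Since $(1_P,p)=(1_P\times p)\circ\Delta_P$ and $(\iota,p_Z)=(\iota\times p_Z)\circ\Delta_{P_Z}$, the composite vertical maps are exactly $\chern 1 \lambda^r \ecupp p_*$, $(\chern 1 {\lambda_Z}^r \ecupp p_{Z*})(m)[2m]$, etc., using the compatibility of Chern classes with pullbacks to identify the restrictions of $\chern 1 \lambda$ to $P_V$ and $P_Z$; this proves all four columns, and in particular the residue square, without any appeal to a comodule structure on $\motx P {P_Z}$.
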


The following lemma will be in fact the crucial case in the proof 
 of the next theorem.
\begin{lm}
\label{lm:proj&Gysin_mot2}
Let $X$ be a smooth scheme
 and $E/X$ (resp. $E'/X$) be a vector bundle of rank $n$ (resp. $m$).
 Let $P$ (resp. $P'$) be the projective completion of $E/X$
 (resp. $E'/X$) and $i$ (resp. $i'$) its canonical section.

We put $R=P \times_X P'$
 and consider the closed immersions:
$$
i:X \rightarrow P, j:P \rightarrow R, k=j \circ i,
$$
where $j=P \times_X i'$ and $k=(i,i')$.
Then $k^*=i^* j^*$.
\end{lm}
\begin{proof}
We consider the following canonical morphisms:
$$
\xymatrix@=18pt{
R\ar^q[r]\ar_{q'}[d]\ar@{}|\pi[rd]\ar|/-0.3pt/{\phantom{\Big(}}[rd]
 & P'\ar^{p'}[d] \\
P\ar_p[r] & X
}
$$
According to Lemma \ref{lm:Gysin&fdl_class_retraction},
 we obtain
\begin{equation*} \label{eq:pf_lm:proj&Gysin_mot2_bis}
i^*=\eta_P(X) \ecupp_P p_*, \quad
j^*=\eta_R(P) \ecupp_R q'_*, \quad
k^*=\eta_R(X) \ecupp_P \pi_*.
\end{equation*}

Applying the first case of Proposition \ref{prop:gysin_transversal}
 to the cartesian morphism of closed pairs $(q',p'):(R,P') \rightarrow (P,X)$,
 we obtain the relation:
$$
\eta_P(X) \circ q'_*=\eta_R(P').
$$
Together with the preceding computations,
 it implies the following equality:
$$
i^*j^*=\eta_R(P) \, \ecupp_P \eta_R(P') \, \ecupp_P \pi_*.
$$
Thus we are reduced to prove the relation:
\begin{equation} \label{eq:pf_lm:proj&Gysin_mot2}
\eta_R(X)=\eta_R(P) \ecuppx R \eta_R(P').
\end{equation}

Consider the notations of Example \ref{ex:excess&fdl_class}
 applied to the case of $E/X$ (resp. $E'/X$):
 we get a vector bundle $F/P$ (resp. $F'/P$) of rank $n$ (resp. $m$)
 such that:
\begin{align*}
&\eta_P(X)=\mathfrak c_n(F), \\
\text{resp. } & \eta_{P'}(X)=\mathfrak c_m(F').
\end{align*}
Let $\sigma$ (resp. $\sigma'$) be the section of $F/P$
 (resp. $F'/P'$) constructed in \emph{loc. cit.}
Consider the vector bundle over $R$ defined as:
$$
G=F \times_X F'=q^{\prime-1}(F) \oplus q^{-1}(F').
$$
We get a section $(\sigma \times_X \sigma')$ of $G/P$
 which is transversal to the zero section $s_0^G$
 and such that the following square is cartesian:
$$
\xymatrix@=18pt{
X\ar^i[r]\ar[d] & R\ar^{\sigma \times_X \sigma'}[d] \\
R\ar^{s_0^G}[r] & G.
}
$$
Thus, according to Lemma \ref{lm:Gysin&fdl_class},
 we obtain:
$$
\eta_R(X)=\mathfrak c_{n+m}(G).
$$
The relation \eqref{eq:pf_lm:proj&Gysin_mot2}
 now follows from Remark \ref{rem:Chern_classes_products&pullbacks}
 and the equality
$$
c_{n+m}(G)=q^{\prime*}(c_n(F)).q^*(c_m(F'))
$$
in $CH^{n+m}(R)$.
\end{proof}

\begin{thm}
\label{thm:assoc}
Consider a topologically cartesian square of smooth schemes
$$
\xymatrix@=10pt{
Z\ar^k[r]\ar_l[d] & Y'\ar^j[d] \\
Y\ar^i[r] & X
}
$$
such that $i$,$j$,$k$,$l$ are closed immersions of respective pure
codimensions $n$, $m$, $s$, $t$. We put $d=n+t=m+s$
 and let $i':(Y-Z) \rightarrow (X-Y')$, $j':(Y'-Z) \rightarrow (X-Y)$
 be the closed immersion respectively induced by $i$, $j$.

Then the following diagram is commutative~:
$$
\xymatrix@R=20pt@C=30pt{
\mot X\ar^-{j^*}[r]\ar_{i^*}[d]\ar@{}|{(1)}[rd]
 & \mot{Y'}\dtwist m\ar^-{\partial_j}[r]\ar^{k^*}[d]
    \ar@{}^{(2)}[rd]
 & \mot{X-Y'}[1]\ar^{(i')^*}[d] \\
\mot{Y}\dtwist n\ar_{l^*}[r]
 & \mot Z\dtwist d\ar^/-5pt/{\partial_l}[r]
     \ar^{\partial_k}[d]\ar@{}|{(3)}[rd]
 & \mot{Y-Z}\dtwist n[1]\ar^{\partial_{i'}}[d] \\
& \mot{Y'-Z}\dtwist m[1]\ar_-{-\partial_{j'}}[r]
 & \mot{X-Y \cup Y'}[2]
}
$$
\end{thm}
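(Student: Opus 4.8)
Throughout I write $(il)^*\colon \mot X \to \mot Z\dtwist d$ for the Gysin morphism of the \emph{composed} closed immersion $Z\hookrightarrow X$ of codimension $d$, and similarly $(jk)^*$; since $il=jk$ as morphisms of schemes these are the same map. Square~(1) asserts $l^*i^*=k^*j^*$, and it suffices to prove the \emph{associativity} $l^*i^*=(il)^*$ for any two composable closed immersions $Z\xrightarrow{l}Y\xrightarrow{i}X$ of smooth schemes: applying it to $(l,i)$ and to $(k,j)$ yields $l^*i^*=(il)^*=(jk)^*=k^*j^*$. Using \ppexc and \ppadd of Proposition~\ref{properties_motx} (as in the remark following Definition~\ref{refined_morphism}) one first reduces to $X,Y,Z$ connected.

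To establish $l^*i^*=(il)^*$ I would deform the triple $(X,Y,Z)$ to the normal cone in two steps. First, the deformation diagram~\eqref{1st_def_diag} of the smooth closed pair $(X,Y)$, carrying along the fixed subscheme $Z\subset Y$, connects $(X,Y,Z)$ to $(\prj{N_YX},Y,Z)$ with $Y$ the zero section of $p:\prj{N_YX}\to Y$. Because $\bar\sigma_0,\bar\sigma_1$ are cartesian morphisms of smooth closed pairs that become isomorphisms on relative motives by homotopy invariance (exactly as in the proof of Proposition~\ref{prop:purity}), functoriality of the purity isomorphism (Proposition~\ref{prop:purity}(1)) together with the transversal case of Proposition~\ref{prop:gysin_transversal} transports the equality $l^*i^*=(il)^*$ between the two ends, so that $i^*$, $l^*$ and $(il)^*$ all deform compatibly; thus one may assume $X=\prj{E}$ with $E=N_YX$ and $i$ the zero section. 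Secondly, applying~\eqref{1st_def_diag} to $(Y,Z)$ and pulling back the projective completion $p$ along this deformation reduces further to $X=\prj{N_ZY}\times_Z\prj{E|_Z}$, with $Y$ the first factor (zero section of the second) and $Z$ the double zero section; this is precisely the hypothesis of Lemma~\ref{lm:proj&Gysin_mot2} (take $Q=\prj{N_ZY}$, $Q'=\prj{E|_Z}$), which gives $l^*i^*=(il)^*$ and hence square~(1).

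Squares~(2) and~(3) involve $Y'$ and the open complements $X-Y$, $X-Y'$, $X-Z$, $X_0$, and can be handled by the same mechanism applied to the whole configuration $(X,Y,Y',Z)$: the deformation diagram~\eqref{1st_def_diag} is compatible with restriction to open complements, so the residues $\partial_j,\partial_l,\partial_k,\partial_{i'},\partial_{j'}$ and the Gysin morphism $(i')^*$ all deform, their commutativity being transported by the naturality of the Gysin triangle already recorded (the diagram following Definition~\ref{refined_morphism} together with Proposition~\ref{prop:gysin_transversal}(1)); in the linearized model they follow from Lemma~\ref{lm:proj&Gysin_mot}, which is exactly the compatibility of a Gysin triangle with a projective bundle decomposition. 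Alternatively, once~(1) is known one may run the octahedral axiom on $i^*$ and $l^*$ (whose composite is $(il)^*$) to produce, from the Gysin triangles of $(X,Y)$, $(Y,Z)$ and $(X,Z)$, the Gysin triangle of $(X-Z,Y_0)$, do the same for $Z\xrightarrow{k}Y'\xrightarrow{j}X$, and splice the two octahedra along their common triangle of $(X,Z)$; the sign $-1$ in~(3) is the customary sign carried by the second iterate of a connecting morphism in such a $3\times 3$ diagram.

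The step I expect to be the real obstacle is the organization of this deformation in the presence of all four schemes. One must check that each step proceeds through cartesian morphisms of \emph{smooth} closed pairs and preserves the cartesianness of the square $Z=Y\cap Y'$ -- this rests on the fact that, the intersection being clean, $N_ZY$ and $N_ZY'$ meet only along the zero section of $N_ZX$, so that $N_ZY'$ embeds in $N_YX|_Z$ and the strict transform of $Y'$, namely $\prj{N_ZY'}$, does sit inside $\prj{N_YX}$; that pulling back the projective completion in the second step lands the configuration exactly in the hypotheses of Lemma~\ref{lm:proj&Gysin_mot2} (using that the normal bundle of a zero section is the pulled-back bundle); and that the purity normalization of Proposition~\ref{prop:purity}(2) and the signs, in particular the sign of square~(3), are tracked correctly throughout. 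Granting this bookkeeping, the three commutativities reduce to Lemmas~\ref{lm:proj&Gysin_mot} and~\ref{lm:proj&Gysin_mot2}.
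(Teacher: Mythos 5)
There is a genuine gap, on two levels. First, even for square (1) your transport step is not legitimate as stated: the deformation diagram \eqref{1st_def_diag} compares the pairs $(X,Y)$ and $(P_YX,Y)$ only through their \emph{relative} motives (the maps $\bar\sigma_{0*},\bar\sigma_{1*}$ are isomorphisms on $\motx{\cdot}{\cdot}$), whereas there is no morphism at all between $\mot X$ and $\mot{P_YX}$ in $\dmgme$ along which the equality $l^*i^*=(il)^*$ of morphisms \emph{out of} $\mot X$ could be ``deformed''. The Gysin morphism $i^*$ is $\pur{X,Y}$ precomposed with $\mot X\to\motx X Y$, and only the purity-isomorphism part transports; likewise $(il)^*$ factors through $\motx X Z$, and it is not a formal consequence of \eqref{1st_def_diag} for $(X,Y)$ that $\motx X Z$ is identified with the corresponding relative motive over the deformation space. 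This is precisely why the paper does not deform the equality of Gysin morphisms directly: it introduces the relative motive of a smooth triple, $\mot{X,Y,Y'}=\mot{\frac{X/X-Y}{X-Y'/X-Y\cup Y'}}$, constructs a purity isomorphism $\pur{X,Y,Y'}:\mot{X,Y,Y'}\to\mot Z\dtwist d$ by deforming \emph{relative} motives (and only at the very end uses the split epimorphism $\mot P\to\motx P Z$ to reduce to Lemma \ref{lm:proj&Gysin_mot2}).

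Second, and more seriously, squares (2) and (3) -- the statements about the residues $\partial_j,\partial_l,\partial_k,\partial_{i'},\partial_{j'}$ -- are exactly the content that does not follow from (1) by any formal device, and your proposal has no real argument for them. The octahedral-axiom alternative fails for the usual reason: the axiom asserts the \emph{existence} of some maps completing a diagram, with no uniqueness, so it cannot identify the connecting morphisms it produces with the specific residues defined through the purity isomorphisms; splicing two octahedra therefore proves nothing about $\partial_{i'}\partial_l=-\partial_{j'}\partial_k$. The ``same mechanism applied to $(X,Y,Y',Z)$'' is where the actual work lies: in the paper, all three squares are read off from the $4\times 4$ diagram $(\mathcal D)$ once one has a purity isomorphism for triples satisfying functoriality, \emph{compatibility} (property (iii), which is what gives square (2)), and above all \emph{symmetry} in $Y,Y'$ (property (ii)), the paper's explicitly identified crux. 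Symmetry is an equality of two isomorphisms in a triangulated category and is proven by the double deformation space $D(D_ZX,D_ZX|_Y)$ over $\dten 2$, reducing to the triple $(Q\times_Z Q',Q,Z)$ and then to Lemma \ref{lm:proj&Gysin_mot2}. Your sketch correctly guesses the final linearized model and the role of Lemmas \ref{lm:proj&Gysin_mot} and \ref{lm:proj&Gysin_mot2}, but without the triple relative motive, its purity isomorphism, and the symmetry argument, the reduction you describe does not go through, and the sign in (3) (which in the paper comes from the anticommutative square of $(\mathcal D)$, i.e.\ the permutation on $\ZZ[1]\otimes\ZZ[1]$) is not accounted for by a genuine argument.
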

\begin{proof}
We will simply call smooth triple the data $(X,Y,Y')$
 of a triple of smooth schemes $X$, $Y$, $Y'$ 
 such that $Y'$ and $Y$ are closed subschemes of $X$. 
 Such smooth triples form a category with morphisms 
 the commutative diagrams
$$
\xymatrix@=20pt{
\bar Y\ar_g[d]\ar@{^(->}[r] & \bar X\ar_f[d] & \bar Y'\ar^{g'}[d]\ar@{_(->}[l] \\
Y\ar@{^(->}[r] & X & Y'\ar@{_(->}[l]
}
$$
made of two cartesian squares. We say in addition that the morphism
$(f,g,g')$ is \emph{transversal} if $f$ is transversal to $Y$, $Y'$
 and $Y \cap Y'$.

To such a triple, we associate a geometric motive $\mot{X,Y,Y'}$
as the cone of the canonical map of complexes of $\lisc$
$$
\xymatrix@C=20pt@R=10pt{
\hdots\ar[r] & [X-Y \cup Y']\ar[r]\ar[d] & [X-Y']\ar[r]\ar[d] & \hdots \\
\hdots\ar[r] & [X-Y]\ar[r] & [X]\ar[r] & \hdots
}
$$
where $[X]$ and $[X-Y']$ are placed in degree $0$.
This motive is evidently functorial
 with respect to morphisms of smooth triples.

We will also use the notation
$\mot{\frac{X/X-Y}{X-Y'/X-Y \cup Y'}}$ for this motive because it is more suggestive.
By definition, it fits into the following diagram, with $\Omega=Y \cup Y'$:
$$
\xymatrix@C=12pt@R=16pt{
(\mathcal D):\mot{X-\Omega}\ar[r]\ar[d]
 & \mot{X-Y}\ar[r]\ar[d]
 & \mot{\frac{X-Y}{X-\Omega}}\ar[r]\ar[d]
 & \mot{X-\Omega}[1]\ar[d] \\
\mot{X-Y'}\ar[r]\ar[d]
 & \mot X\ar[r]\ar[d]\ar@{}|{(1)}[rd]
 & \mot{\frac X {X-Y'}}\ar[d]\ar[r]\ar@{}|{(2)}[rd]
 & \mot{X-Y'}[1]\ar[d] \\
\mot{\frac{X-Y'}{X-\Omega}}\ar[r]\ar[d]
 & \mot{\frac X {X-Y}}\ar[r]\ar[d]
 & \mot{\frac{X/X-Y}{X-Y'/X-\Omega}}
    \ar[r]\ar[r]\ar[d]\ar@{}|{(3)}[rd]
 & \mot{\frac{X-Y'}{X-\Omega}[1]}\ar[d] \\
\mot{X-\Omega}[1]\ar[r]
 & \mot{X-Y}[1]\ar[r]
 & \mot{\frac{X-Y}{X-\Omega}}[1]\ar[r]
 & \mot{X-\Omega}[2].
}
$$
In this diagram, every
square is commutative except square (3) which is anticommutative
due to the fact the permutation isomorphism on 
$\ZZ[1] \otimes \ZZ[1]$ is equal to $-1$. 
Moreover, any line or row of this diagram is a distinguished triangle.

With the hypothesis of the theorem, the proof will consist in
constructing a purity isomorphism  
$\pur{X,Y,Y'}:\mot{X,Y,Y'} \rightarrow M(Z)(d)[2d]$ which
satisfies the following properties~:
\begin{enumerate}
\item[(i)] \textit{Functoriality~:} The morphism $\pur{X,Y,Y'}$ 
is functorial
with respect to transversal morphisms of smooth triples. 
\item[(ii)] \textit{Symmetry~:} The following diagram is 
commutative~:
$$
\xymatrix@R=10pt@C=20pt{
\mot{X,Y,Y'}\ar_/-5pt/{\pur{X,Y,Y'}}[rd]\ar^{}[rr] &
 & \mot{X,Y',Y}\ar^/-5pt/{\quad \pur{X,Y',Y}}[ld] \\
& M(Z)(d)[2d]
}
$$
where the horizontal map is the canonical isomorphism.
\item[(iii)] \textit{Compatibility~:} The following diagram 
is commutative~:
$$
\xymatrix@C=13pt@R=20pt{
\mot{\frac{X-Y'}{X-\Omega}}\ar[r]\ar|{\pur{X-Y',Y-Z}}[d]
 & \mot{\frac X {X-Y}}\ar[r]\ar|{\pur{X,Y}}[d]
 & \mot{X,Y,Y'}\ar[r]
     \ar|{\pur{X,Y,Y'}}[d]
 & \mot{\frac{X-Y'}{X-\Omega}}[1]\ar|{\pur{X-Y',Y-Z}[1]}[d] \\
\mot{Y-Z}\dtwist n\ar[r]
 & \mot{Y}\dtwist n\ar^-{j^*}[r]
 & \mot Z\dtwist d\ar^-{\partial_j}[r]
 & \mot{Y-Z}\dtwist n[1]
}
$$
\end{enumerate}
With this isomorphism, we can deduce the three relations of the
theorem by considering squares $(1)$, $(2)$, $(3)$ in the above
diagram and applying the evident purity isomorphism where 
it belongs.

We then are reduced to construct the isomorphism and to prove the
above relations. 
The second relation is the most difficult one because we
have to show that two isomorphisms in a triangulated category
are equal. 
This forces us to be very precise in the construction
 of the isomorphism.

\bigskip

\noindent \underline{Construction of the purity isomorphism for smooth
  triples}~:

Consider the deformation diagram \eqref{1st_def_diag} for the closed
pair $(X,Y)$ and put $B=B_Y(\dtex X)$, $P=P_YX$.
Put also $(U,V)=(X-Y',Y-Z)$, $B_U=B \times_X U$ and $P_V=P \times_Y V$.
Note that, because $Z=(Y \times_X Y')_{red}$, we get $V=Y \times_X U$ ;
 thus $B_U$ is the deformation space of \eqref{1st_def_diag} for the
 closed pair $(U,V)$.
By functoriality of the deformation diagram and of relative motives
we obtain the following morphisms of distinguished triangles~:
$$
\xymatrix@C=26pt@R=8pt{
\mot{U,V}\ar[r]\ar[d]
 & \mot{X,Y}\ar[r]\ar[d]
 & \mot{\frac{X/X-Y}{U/U-V}}\ar^/16pt/{+1}[r]\ar[d] & \\
\mot{B_U,\dtex V}\ar[r]
 & \mot{B,\dtex Y}\ar[r]
 & \mot{\frac{B/B-\dtex Y}{B_U/B_U-\dtex V}}\ar^/16pt/{+1}[r] & \\
\mot{P_V,V}\ar[r]\ar[u]
 & \mot{P,Y}\ar[r]\ar[u]
 & \mot{\frac{P/P-Y}{P_V/P_V-V}}\ar^/16pt/{+1}[r]\ar[u] &
}
$$
According to Proposition \ref{prop:purity} and homotopy invariance,
 the vertical maps in the first two columns are isomorphisms.
As the rows in the diagram are distinguished triangles,
the vertical maps in the third column also are isomorphisms.

Using Lemma \ref{lm:proj&Gysin_mot} with $P=\PP(N_YX \oplus \dtex Y)$,
we can consider the following morphism of distinguished
triangles~:
$$
\xymatrix@C=26pt@R=16pt{
\mot{P_V,V}\ar[r]
 & \mot{P,Y}\ar[r]
 & \mot{\frac{P/P-Y}{P_V/P_V-V}}\ar^/16pt/{+1}[r] & \\
\mot{P_V}\ar[r]\ar[u]
 & \mot{P}\ar[r]\ar[u]
 & \mot{\frac{P}{P_V}}\ar^/16pt/{+1}[r]\ar[u]
 & \\
\mot{P_V}\ar[r]\ar@{=}[u]
 & \mot{P}\ar[r]\ar@{=}[u]
 & \mot{P_Z}\dtwist s\ar^/18pt/{+1}[r]
     \ar_/-3pt/{\pur{P,P_Z}^{-1}}[u]
 & \\
\mot{Y-Z}\dtwist n\ar[r]\ar^{\lef n {P_V}}[u]
 & \mot Y\dtwist n\ar[r]\ar^{\lef n {P}}[u]
 & \mot Z\dtwist d\ar^/14pt/{+1}[r]
    \ar_{\lef n {P_Z}}[u]
 &
}
$$
The triangle on the bottom is obtained by tensoring the Gysin triangle
of the pair $(Y,Z)$ with $\ZZ(n)[2n]$. 
From Proposition \ref{prop:purity}, 
 the first two of the vertical composite arrows are isomorphisms,
  so the last one is also an isomorphism.

If we put together (vertically) the two previous diagrams,
we finally obtain the following isomorphism of triangles~:
$$
\xymatrix@C=2pt@R=20pt{
\mot{U,V}\ar[r]\ar|{\pur{X-Y',Y-Z}}[d]
 & \mot{X,Y}\ar[rrr]\ar|{\pur{X,Y}}[d]
 &&& \mot{X,Y,Y'}\ar[rr]
     \ar^{(*)}[d]
 && \mot{U,V}[1]\ar[d] \\
\mot{Y-Z}\dtwist n\ar[r]
 & \mot Y\dtwist n\ar^/1pt/{j^*}[rrr]
 &&& \mot Z\dtwist d\ar^/-8pt/{\partial_j}[rr]
 && \mot{Y-Z}\dtwist n[1].
}
$$
We define $\pur{X,Y,Z}$ as the morphism labeled $(*)$ in
the previous diagram so that property (iii) follows from the construction.
The functoriality property (i) follows easily from
the functoriality of the deformation diagram.

\bigskip

\noindent \underline{The remaining relation}

To conclude it only remains to prove the symmetry property (ii). First of
all, we remark that the above construction implies immediately the
commutativity of the following diagram~:
$$
\xymatrix@R=10pt@C=6pt{
\mot{\frac{X/X-Y}{X-Y/X-Y \cup Y'}}\ar_{\pur{X,Y,Y'}}[rd]
  \ar^{}[rr]
& & \mot{\frac{X/X-Y}{X-Z/X-Y}}\ar^{\pur{X,Y,Z}}[ld] \\
& \mot Z\dtwist d, &
}
$$
where the horizontal map is induced by the evident open immersions.

Thus, it will be sufficient to prove the commutativity of the
following diagram~:
$$
\xymatrix@R=10pt@C=6pt{
\mot{\frac{X}{X-Z}}\ar_/-7pt/{\pur{X,Z}}[rd]\ar^{\alpha_{X,Y,Z}}[rr]
 & \ar@{}|/-3pt/{(*)}[d]
 & \mot{\frac{X/X-Y}{X-Z/X-Y}}\ar^/-7pt/{\quad \pur{X,Y,Z}}[ld] \\
& \mot Z\dtwist{n+m},
}
$$
where $\alpha_{X,Y,Z}$ denotes the canonical isomorphism. 

\bigskip

From now on, we consider only the smooth triples $(X,Y,Z)$ such that
$Z$ is a closed subscheme of $Y$.
Using the functoriality of $\pur{X,Y,Z}$, we remark that the
diagram {$(*)$} is natural with respect to morphisms $f:X' \rightarrow
X$ which are transversal to $Y$ and $Z$.

Consider the notations of the paragraph \ref{def_diag} and
put $D_ZX=B_Z(\dtex X)$ for short.
We will expand these notations as follows~:
$$
D(X,Z)=D_ZX, \; B(X,Z)=B_ZX \;, P(X,Z)=P_ZX\;, N(X,Z)=N_ZX.
$$
To $(X,Y,Z)$, we associate the evident closed pair $(D_ZX,D_ZX|_Y)$
 and the {\it double deformation space} 
$$D(X,Y,Z)=D(D_ZX,D_ZX|_Y).$$
This scheme is in fact fibered over $\dten 2$. 
The fiber over $(1,1)$ is $X$ and the fiber
over $(0,0)$ is $B(B_ZX \cup P_ZX,B_ZX|_Y \cup P_ZX|_Y)$.
In particular, the $(0,0)$-fiber contains the scheme $P(P_ZX,P_ZY)$.

We now put 
$
\left\{
\begin{array}{ll}
D=D(X,Y,Z), & R=P(R_ZX,R_ZY) \\
D'=D(Y,Y,Z), & P=R_ZY.
\end{array}
\right.
$ \\
Remark also that $D(Z,Z,Z)=\dtenx 2 Z$
and that
$R=P \times_Z P'$ where $P'=P_YX|_Z$.\footnote{The
last property is equivalent to the identification:
 $N(N_ZX,N_ZY)=N_ZY \oplus N_YX|_Z$.} 
From the description of the fibers of $D$ given above,
 we obtain a deformation diagram of smooth triples~:
$$
(X,Y,Z) \rightarrow (D,D',\dtenx 2 Z) \leftarrow (R,P,Z).
$$
Note that these morphisms are on the smaller closed subscheme the
$(0,0)$-section and $(1,1)$-section of $\dtenx 2 Z$ over $Z$, denoted 
respectively by $s_0$ and $s_1$.
Now we apply these morphisms to the diagram $(*)$ 
 in order to obtain the following commutative diagram~:
$$
\xymatrix@R=20pt@C=-16pt{
\motx X Z\ar|/-15pt/{\pur{X,Z}}[dd]\ar^/5pt/{\!\!\alpha_{X,Y,Z}}[rd]\ar[rr]
 && \motx D {\dtenx 2 Z}\ar|/-15pt/{\pur{D,\dtenx 2 Z}}[dd]\ar[rd]
 && \motx R Z\ar|/-15pt/{\pur{R,Z}}[dd]\ar^/5pt/{\!\!\alpha_{R,P,Z}}[rd]\ar[ll]
 \\
& \mot{X,Y,Z}\ar|/-0pt/{\quad \pur{X,Y,Z}}[ld]\ar[rr] &
 & \mot{D,D',\dtenx 2 Z}\ar|/-0pt/{\quad \pur{D,D',Z}}[ld] &
 & \mot{R,P,Z}\ar|/-0pt/{\quad \pur{R,P,Z}}[ld]\ar[ll]
 \\
\mot Z\dtwist{n+m}\ar@<-3pt>@{}_-{{s_1}_*}[rr]\ar[rr]
 && \mot{\dtenx 2 Z}\dtwist{n+m}
 && \mot Z\dtwist{n+m}.\ar^-{{s_0}_*}[ll]
}
$$
One knows that every part of this diagram save the triangle ones
 are commutative.
As the morphisms ${s_1}_*$ and ${s_{0}}_*$ are isomorphisms, the
commutativity of the left triangle is equivalent to the
 commutativity of the right one.

Thus, we are reduced to the case of the smooth triple $(R,P,Z)$.
Now, using the canonical split epimorphism
 $\mot{R} \rightarrow \motx R Z$, we are reduced to prove
the commutativity of the diagram~:
$$
\xymatrix@R=-6pt@C=50pt{
\mot{R}\ar_{i^*}[dd]\ar[rd] & \\
 & \mot{\frac{R/R-P}{R-Z/R-P}}
   \ar^/-0pt/{\quad \pur{R,P,Z}}[ld] \\
\mot Z\dtwist{d}
}
$$
where $i:Z \rightarrow R$ denotes the canonical closed immersion.

Using the property (iii) of the isomorphism $\pur{R,P,Z}$, 
we are finally reduced to prove the commutativity of the triangle
$$
\xymatrix@R=-2pt@C=50pt{
\mot{R}\ar_{i^*}[dd]\ar^{j^*}[rd] & \\
 & \mot P\dtwist n\ar^{k^*}[ld] \\
\mot Z\dtwist{d} &
}
$$
where $j$ and $k$ are the evident closed embeddings. 
This is Lemma \ref{lm:proj&Gysin_mot2}.
\end{proof}

As a corollary (take $j=i \circ l$, $k=1_Z$),
 we get the functoriality of the Gysin morphism of a closed immersion~:
\begin{cor}
\label{cor:assoc}
Let $Z \xrightarrow l Y \xrightarrow i X$ be closed immersion between
smooth schemes such that $i$ is of pure codimension $n$.

Then, $l^* \circ i^*=(i \circ l)^*$.
\end{cor}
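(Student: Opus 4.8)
\textit{Proof proposal.} The plan is to obtain the statement as an immediate instance of square~$(1)$ of Theorem~\ref{thm:assoc}, applied to the cartesian square in which the second subscheme is taken to be $Z$ itself, embedded in $X$ through the composite $i\circ l$.

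First I would check that, since $i$ is in particular a monomorphism, the square
$$
\xymatrix@=14pt{
Z\ar^-{\mathrm{id}}[r]\ar_l[d] & Z\ar^{i\circ l}[d] \\
Y\ar_-i[r] & X
}
$$
is cartesian: the projection $Y\times_X Z\to Z$ is a monomorphism, being a base change of $i$, and it admits $(l,\mathrm{id}_Z)$ as a section, hence is an isomorphism. Write $c$ for the codimension of $Z$ in $Y$, which is constant by our standing convention on codimensions (or one argues componentwise). Then the four closed immersions $l$, $\mathrm{id}_Z$, $i$, $i\circ l$ have pure codimensions $c$, $0$, $n$, $n+c$ respectively, and $n+c=(n+c)+0$, so this square satisfies all the hypotheses of Theorem~\ref{thm:assoc}, with $d=n+c$.

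Next I would simply read off square~$(1)$ of that theorem for this data. With the theorem's notation specialised as above, it asserts the commutativity of
$$
(\mathrm{id}_Z)^*\circ(i\circ l)^* \;=\; l^*\circ i^*\colon\ \mot X\longrightarrow\mot Z(n+c)[2(n+c)],
$$
up to the usual identification of Tate twists. To finish it suffices to observe that the Gysin morphism of the identity immersion is the identity: for the codimension-$0$ smooth pair $(Z,Z)$ one has $\motx Z Z=\mot Z$ canonically, the map $\mot Z\to\motx Z Z$ appearing in the triangle~\eqref{eq:gysin_beta} is the identity, and the purity isomorphism $\pur{Z,Z}$ is the identity by the normalisation condition~(2) of Proposition~\ref{prop:purity} applied to the rank-$0$ bundle. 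Hence $(\mathrm{id}_Z)^*=\mathrm{id}_{\mot Z}$, and the displayed equality becomes $(i\circ l)^*=l^*\circ i^*$.

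I do not anticipate any real obstacle here: the content lies entirely in Theorem~\ref{thm:assoc}, and the only points needing (minor) care are that the displayed square is cartesian — which uses nothing beyond $i$ being a monomorphism — and the degenerate identification $(\mathrm{id}_Z)^*=\mathrm{id}$.
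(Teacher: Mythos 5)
Your proposal is correct and is exactly the intended derivation: the paper states this as an immediate corollary of Theorem \ref{thm:assoc}, i.e.\ square $(1)$ applied to the degenerate cartesian square with $Y'=Z$ (embedded via $i\circ l$) and $k=\mathrm{id}_Z$, together with the easy observation that $(\mathrm{id}_Z)^*=\mathrm{id}$. Your checks that this square is cartesian and that the identity immersion has trivial Gysin morphism are the right (and only) details to supply.
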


As an illustration of the formulas obtained in the preceding theorem,
 we prove the following result:
\begin{prop} \label{prop:add_Gysin}
Consider a smooth closed pair $(X,Z)$ of codimension $n$
 and $\nu:Z \rightarrow X$ the corresponding immersion.

Consider the canonical decompositions $Z=\sqcup_{i \in I} Z_i$
 and $X=\sqcup_{j \in J} X_j$ into connected components.
 Put $\hat Z_j=Z \times_X X_j$.
 For any index $i \in I$, let $j \in J$ be the unique
 element such that $Z_i \subset X_j$ ;
 we let $\nu_i^j:Z_i \rightarrow X_j$
  be the immersion induced by $\nu$
  and 
 we denote by $Z'_i$ the unique scheme such that:
  $\hat Z_j=Z_i \sqcup Z'_i$.

Consider the following commutative diagram:
$$
\xymatrix@C=55pt{
\mot X\ar^{\nu^*}[r]
 & \mot Z\dtwist n\ar^{\partial_{X,Z}}[r]
 & \mot{X-Z}[1] \\
\oplus_{j \in J} \mot{X_j}\ar_-{(\nu_{ji})_{j \in J,i \in I}}[r]\ar^\sim[u]
 & \oplus_{i \in I} \mot{Z_i}\dtwist n\ar_-{(\partial_{ij})_{i \in I, j \in J}}[r]\ar_\sim[u]
 & \oplus_{j \in J} \mot{X_j-\hat Z_j}[1]\ar_\sim[u] \\
}
$$
where the vertical maps are the canonical isomorphisms.

Then, for any couple $(i,j) \in I \times J$,
\begin{enumerate}
\item if $Z_i \subset X_j$, $\nu_{ji}=\big(\nu_i^j\big)^*$
 and $\partial_{ij}=\partial_{X_j-Z'_i,Z_i}$,
\item otherwise,
$\nu_{ji}=0$  and $\partial_{ij}=0$.
\end{enumerate}
\end{prop}
\begin{proof}
We consider the following cartesian squares
 made of the evident immersions:
\begin{equation} \label{eq:pf:prop:add_Gysin}
\begin{split}
\begin{array}{ll}
\text{If } Z_i \subset X_j, &
\text{otherwise,} \\
\xymatrix{
Z_i\ar^{\nu_i^j}[r]\ar@{=}[d]
 & X_j\ar_{x_j}[d]
 & \hat Z_j\ar_-{\hat z_j}[l]\ar^{\nu_i^j}[d]
 & Z_i\ar_-{}[l]\ar@{=}[d] \\
Z_i\ar_{\nu_i}[r] & X
 & Z\ar^\nu[l] & Z_i\ar^{z_i}[l]
} \quad
&
\xymatrix{
\emptyset \ar[r]\ar[d]
 & X_j\ar_{x_j}[d]
 & \hat Z_j\ar_-{\hat z_j}[l]\ar^{\nu_i^j}[d]
 & \emptyset\ar[l]\ar[d] \\
Z_i\ar_{\nu_i}[r] & X
 & Z\ar^\nu[l] & Z_i\ar^{z_i}[l]
}
\end{array}
\end{split}
\end{equation}
We also consider the open and closed immersion
 $u_j:(X_j-\hat Z_j) \rightarrow (X-Z)$.

According to corollary \ref{cor:Gysin&open_closed},
 we obtain the following equalities:
$$
\nu_{ji}=z_i^* \nu^* x_{j*}, \quad 
 \partial_{i,j}=u_j^* \partial_{X,Z} z_{i*}.
$$
Then the result follows from the following computations:
\begin{align*}
z_i^* \nu^* x_{j*}
 \stackrel{(a)}=\nu_i^* x_{j*}
 \stackrel{(b)}=
&\begin{cases}
(\nu_i^j)^*  & \text{if } Z_i \subset X_j, \\
0 & \text{otherwise}.
\end{cases} \\
u_j^* \partial_{X,Z} z_{i*}
 \stackrel{(c)}=\partial_{X_j,\hat Z_j} \hat z_j^* z_{i*}
 \stackrel{(d)}=
&\begin{cases}
\partial_{X_j,\hat Z_j} (z_i^j)_*\stackrel{(e)}=\partial_{X_j-Z'_i,Z_i}
  & \text{if } Z_i \subset X_j, \\
0 & \text{otherwise}.
\end{cases}
\end{align*}
We give the following justifications for each equality:
%\begin{itemize}
%\item[(a)]: corollary \ref{cor:assoc}.
%\item[(b)]: proposition \ref{prop:gysin_transversal} applied
% to the relevant first square of \eqref{eq:pf:prop:add_Gysin}.
%\item[(c)]: theorem \ref{thm:assoc} applied to
% the second cartesian square of \eqref{eq:pf:prop:add_Gysin}.
%\item[(d)]: proposition \ref{prop:gysin_transversal} applied 
% to the relevant third square of \eqref{eq:pf:prop:add_Gysin}.
%\item[(e)]: proposition \ref{prop:gysin_transversal}.
%\end{itemize}
\item[\indent (a)]: Corollary \ref{cor:assoc} ($\nu_i=\nu \circ z_i$).
\item[\indent (b)]: Proposition \ref{prop:gysin_transversal} applied
 to the first square of the respective commutative diagram
 of \eqref{eq:pf:prop:add_Gysin}
 corresponding to the each respective case.
\item[\indent (c)]: Theorem \ref{thm:assoc} applied to
 the second cartesian square of \eqref{eq:pf:prop:add_Gysin}.
\item[\indent (d)]: Proposition \ref{prop:gysin_transversal} applied 
 to the third square of the respective commutative diagram 
 of \eqref{eq:pf:prop:add_Gysin}
 corresponding to each respective case.
\item[\indent (e)]: Proposition \ref{prop:gysin_transversal}.
\end{proof}

%%% Local variables:
%%% mode: latex
%%% tex-main-file: "main"
%%% TeX-master: "main"
%%% ispell-local-dictionary: "english"
%%% end:

\section{Gysin morphism}

In this section, motives are considered in the category
 $\dmgm$.

\subsection{Construction}

\subsubsection{Preliminaries}

\begin{lm}
\label{lm:double_proj_iso}
Let $X$ be a smooth scheme, $P/X$ and $Q/X$ be projective bundles of 
respective dimensions $n$ and $m$.
We consider $\lambda_P$ (resp. $\lambda_Q$) the canonical dual line
bundle on $P$ (resp. $Q$) and $\lambda'_P$
(resp. $\lambda'_Q$) its pullback on $P \times_X Q$. 
Let $p:P \times_X Q \rightarrow X$ be the canonical projection.

Then, the morphism
 $\sigma:M(P \times_X Q) \longrightarrow
\bigoplus_{i,j} M(X)(i+j)[2(i+j)]$
given by the formula
$$
\sigma=\sum_{0 \leq i \leq n, \, 0 \leq j \leq m} 
 \chern 1 {\lambda'_P}^i
 \ecupp \chern 1 {\lambda'_Q}^j
 \ecupp p_*
$$
is an isomorphism.
\end{lm}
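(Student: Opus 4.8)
The plan is to iterate the projective bundle theorem \ref{proj_bdle_iso} and then recognise the resulting composite as the product map in the statement.

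First I would regard $P\times_X Q$ over $Q$ through the second projection $q':P\times_X Q\rightarrow Q$. Since $Q$ is a projective bundle over the smooth scheme $X$ it is smooth, and $P\times_X Q$ is precisely the projective bundle $P/X$ pulled back along the structural morphism $q:Q\rightarrow X$; being a Zariski-locally trivial $\PP^n$-bundle is stable under this base change, as is the canonical dual line bundle, so $P\times_X Q/Q$ is a projective bundle of rank $n$ whose canonical dual line bundle is exactly $\lambda'_P$. Applying \ref{proj_bdle_iso} to $P\times_X Q/Q$ gives an isomorphism
$$
\mot{P\times_X Q}\xrightarrow{\ \sum_{0\leq i\leq n}\chern 1{\lambda'_P}^i\ecupp q'_*\ }
 \bigoplus_{0\leq i\leq n}\mot Q\dtwist i .
$$
Applying \ref{proj_bdle_iso} to $Q/X$ and twisting by $\ZZ\dtwist i$ gives, for each $i$, an isomorphism $\mot Q\dtwist i\simeq\bigoplus_{0\leq j\leq m}\mot X\dtwist{i+j}$ with $j$-th component $\big(\chern 1{\lambda_Q}^j\ecupp q_*\big)\dtwist i$. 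Composing the two produces an isomorphism
$$
\Phi:\mot{P\times_X Q}\xrightarrow{\ \sim\ }\bigoplus_{0\leq i\leq n,\,0\leq j\leq m}\mot X\dtwist{i+j} ,
$$
and the content of the lemma is the identification of $\Phi$ with the morphism $\Psi$ of the statement.

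To prove $\Phi=\Psi$ I would compare $(i,j)$-components: that of $\Phi$ is $\big(\chern 1{\lambda_Q}^j\ecupp q_*\big)\dtwist i\circ\big(\chern 1{\lambda'_P}^i\ecupp q'_*\big)$, while that of $\Psi$ is $\chern 1{\lambda'_P}^i\ecupp\chern 1{\lambda'_Q}^j\ecupp p_*$. Unwinding the definition of $\ecupp$ through the diagonal, the identification then rests on four elementary and standard facts: associativity (coassociativity) of the exterior product, i.e. of the diagonal of $P\times_X Q$; the compatibility $(\Delta_Q)_*\circ q'_*=(q'_*\otimes q'_*)\circ(\Delta_{P\times_X Q})_*$ expressing that $q'$ carries the diagonal of $P\times_X Q$ to that of $Q$; compatibility of motivic Chern classes with pullback (\ref{motivic_chern}), which yields $\chern 1{\lambda'_Q}=\chern 1{\lambda_Q}\circ q'_*$ and hence $\chern 1{\lambda'_Q}^j=\chern 1{\lambda_Q}^j\circ q'_*$; and the functoriality $p_*=q_*\circ q'_*$. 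Substituting these makes the two $(i,j)$-components literally equal. The only bookkeeping required is with the canonical identifications $\ZZ\dtwist i\otimes\ZZ\dtwist j\simeq\ZZ\dtwist{i+j}$; being symmetric monoidal coherence isomorphisms that transpose no tensor factors, they introduce no sign.

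I expect this last step to be the only, and still mild, obstacle: the projective bundle theorem only guarantees that one specific sum-of-products morphism is an isomorphism, so one does have to verify by hand that the two-step composite again has the product shape $\sum_{i,j}\chern 1{\lambda'_P}^i\ecupp\chern 1{\lambda'_Q}^j\ecupp p_*$ rather than being an unnamed isomorphism — which is exactly what the four compatibilities above accomplish. As a consistency check, running the same argument with the roles of $P$ and $Q$ exchanged (factoring $p$ through $P$, i.e. first applying the projective bundle theorem to $Q\times_X P/P$) yields the same morphism, by the evident symmetry of the formula in $i$ and $j$.
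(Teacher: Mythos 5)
Your proof is correct, but it follows a genuinely different route from the paper. The paper's proof is a local-to-global reduction: since the morphism $\sigma$ of the lemma is compatible with pullbacks, a Mayer--Vietoris argument reduces to trivializable bundles, invariance under automorphisms of $P$ and $Q$ reduces further to trivial bundles, and then by definition of $\sigma$ one is reduced to $X=\spec k$, where $\sigma$ is simply the tensor product of the two projective bundle isomorphisms of \ref{proj_bdle_iso} and hence an isomorphism. You instead stay global: you view $P\times_X Q$ as the projective bundle $\PP(q^*E)$ over $Q$ (whose canonical dual line bundle is indeed $\lambda'_P$), apply \ref{proj_bdle_iso} twice — once relative to $Q$, once for $Q/X$ — and then do the real work of identifying the composite $\Phi$ with the stated product morphism $\Psi$, via coassociativity of the diagonal, the naturality $(\Delta_Q)_*\circ q'_*=(q'_*\otimes q'_*)\circ\Delta_*$, the pullback compatibility $\chern 1{\lambda_Q}^j\circ q'_*=\chern 1{\lambda'_Q}^j$ of \ref{motivic_chern}, and $q_*q'_*=p_*$; this componentwise identification is exactly right, and since no two Tate twists are transposed there is no sign issue (consistent with the paper's own loose treatment of such identifications). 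The trade-off: the paper's argument avoids the diagonal/exterior-product bookkeeping at the cost of the Mayer--Vietoris and trivialization machinery (and it too secretly uses Chern-class pullback compatibility to get naturality of $\sigma$), whereas your argument is purely formal over the base $X$, with the only substantive step being the identification $\Phi=\Psi$, which you carry out correctly. Both are complete proofs.
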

\begin{proof}
As $\sigma$ is compatible with pullback, 
we can assume using property \textbf{(MV)} of Proposition \ref{properties_motx}
that
$P$ and $Q$ are trivializable projective bundles.
Using the invariance of $\sigma$ under 
automorphisms of $P$ or $Q$,
we can assume that $P$ and $Q$ are trivial projective bundles.
From the definition of $\sigma$, we are reduced to the
case $X=\spec k$. Then, $\sigma$ is just the tensor product 
of the two projective bundle isomorphisms 
(cf paragraph \ref{proj_bdle_iso})
for $P$ and $Q$.
\end{proof}

The following proposition is the key point in the definition of the
Gysin morphism for a projective morphism.
\begin{prop}
\label{prop:strong_normalisation_cond}
Let $X$ be a smooth scheme,
 $p:P \rightarrow X$ be a projective bundle of rank $n$
  and $s:X \rightarrow P$ a section of $p$.

Then, the composite map
$\mot X\dtwist n \xrightarrow{\lef n P} \mot P
  \xrightarrow{s^*} \mot X\dtwist{n}$
is the identity.\footnote{In fact, this result holds in the effective category
 $\dmgme$ as the proof will show.}
\end{prop}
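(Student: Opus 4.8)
The statement to prove is that for a section $s:X\to P$ of a projective bundle $p:P\to X$ of rank $n$, the composite $s^*\circ \lef n P = 1$ on $\mot X(n)[2n]$. The strategy is to recognize $s:X\to P$ as a closed immersion of smooth schemes of pure codimension $n$, so that $s^*$ is the Gysin morphism of Definition \ref{df:gysin_triangle}, and then to compute via the fundamental class formula of paragraph \ref{fundamental_class}. Since $s$ admits the retraction $p$, we have $s^*=\eta_P(X)\ecupp_P p_*$, where $\eta_P(X)\in \cohm{2n}{n}{P}$ is the fundamental class of the section. The key geometric input is the identification of this class: I would argue that $\eta_P(X)$ equals the Thom class $t(E)$ of the appropriate rank-$n$ bundle, using the cycle-class description from remark \ref{Gysin_on_Chow} (the fundamental class corresponds to the usual cycle class of $X$ in $CH^n(P)$) together with example \ref{Gysin_proj_bdle} in the case of a projective completion; for a general section this still follows because the relevant computation is local on $X$.

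First I would reduce to the split case. The composite $s^*\circ\lef n P$ is compatible with pullback along morphisms $X'\to X$ (the maps $\lef n P$ and the Gysin morphism are both pullback-compatible, by \ref{proj_bdle_iso} and \ref{prop:gysin_transversal}), so using the Mayer-Vietoris property \ppmv of relative motives I can assume $P=\PP^n_X$ is trivial, and then by translating by an automorphism I can assume $s$ is the standard section at infinity, i.e. $P=\PP(E\oplus 1)$ with $E$ the trivial rank-$n$ bundle and $s=i$ the canonical section. At this point the statement is exactly the content already recorded at the end of example \ref{Gysin_proj_bdle}: we have $i^*=\mathfrak t(E)\ecupp_P p_*$ and, more to the point, $\epsilon_P\circ\pur{P,X}=1$ where $\epsilon_P$ is the induced map $\motx P X\to \mot X(n)[2n]$. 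Composing with the normalization condition (2) of proposition \ref{prop:purity}, which says $\pur{P,X}$ is inverse to $\mot X(n)[2n]\xrightarrow{\lef n P}\mot P\xrightarrow{(1)}\motx P X$, gives precisely that $i^*\circ\lef n P$ is the identity, since $i^* = \epsilon_P\circ(1)$ by construction.

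The step I expect to be the main obstacle is the reduction of the general section to the split/standard one — specifically, justifying that an arbitrary section of a trivial projective bundle is carried to the standard section by an automorphism, or alternatively handling a general section directly. A cleaner route avoiding automorphisms: any section $s$ of $P/X$ identifies $X$ with a smooth closed subscheme of $P$ whose normal bundle $N$ is a rank-$n$ bundle on $X$, and one can apply the deformation-to-the-normal-cone diagram \eqref{1st_def_diag} for the pair $(P,X)$ to reduce to the case of the pair $(P_XP\cong\PP(N\oplus 1),X)$ with $X$ the zero section — exactly the situation of \ref{Gysin_proj_bdle}. One must then check that under this deformation $\lef n P$ is compatible with $\lef n {\PP(N\oplus 1)}$, which follows from functoriality of the projective bundle isomorphism \ref{proj_bdle_iso} applied to the deformation diagram, together with the fact that the $1$-section $\bar\sigma_1$ has a retraction so induces the correct map on motivic cohomology (as in remark \ref{Gysin_on_Chow}). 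Once in the split case the identity $s^*\circ\lef n {\PP(N\oplus 1)}=1$ is the already-noted consequence of \ref{Gysin_proj_bdle} and \ref{prop:purity}(2), and the proof is complete.
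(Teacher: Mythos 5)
Your base case is right — for the canonical section of $\PP(E\oplus 1)$ the identity $i^*\circ\lef n {\PP(E\oplus1)}=1$ does follow from example \ref{Gysin_proj_bdle} together with the normalization (2) of proposition \ref{prop:purity} — but both of your proposed reductions of a general section to this case have genuine gaps, and they are exactly the difficulties the statement is meant to overcome (this is why the paper records it as a \emph{reinforcement} of \ref{prop:purity}(2) in remark \ref{strong_normalization}). First, the Mayer--Vietoris step: \ppmv lets you check that a morphism is an \emph{isomorphism} Zariski-locally, as in lemma \ref{lm:double_proj_iso} where one shows a cone vanishes, but it does not let you check an \emph{equality} of morphisms locally. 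If $\phi_X=s^*\circ\lef n P$ restricts to the identity over $U$, $V$ and $U\cap V$, you only learn that $\phi_X-1$ kills the image of $\mot U\oplus\mot V$, hence factors through $\mot{U\cap V}(n)[2n][1]$; the group $\Hom(\mot{U\cap V}[1],\mot X)$ has no reason to vanish, so no conclusion follows. Second, even over a base where $P$ is trivial, an arbitrary section is not carried to the constant section by an automorphism of $P/X$: for $X=\PP^1$, $P=\PP^1\times\PP^1$ and $s$ the diagonal, automorphisms over $X$ are constant elements of $PGL_2$ and never straighten the diagonal; this only works after further shrinking $X$, which throws you back on the unjustified local-to-global step. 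Relatedly, your claim that $\eta_P(X)$ equals a Thom class ``because the computation is local on $X$'' is false: in the diagonal example the cycle class is $h_1+h_2$, not $h_2$; only its coefficient on $c_1(\lambda)^n$ is controlled, and that is the most one can hope for.

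Your ``cleaner route'' has the same problem in another guise. Deformation to the normal cone applied to $(P,s(X))$ does deform it to $(\PP(N\oplus1),X)$ compatibly with purity (that is how \ref{prop:purity} is proved), but the total space $B_{s(X)}(\dtex P)$ is a blow-up of a projective bundle and is \emph{not} a projective bundle over $\dtex X$, so there is no morphism of projective bundles in the diagram \eqref{1st_def_diag} to which ``functoriality of the projective bundle isomorphism'' could be applied: the splitting $\lef n P$ does not extend over the deformation, and comparing it with $\lef n{\PP(N\oplus1)}$ is precisely the missing content, not a routine check. The paper's actual proof avoids any reduction: by \ref{fundamental_class}, $s^*=\eta_P(X) \ecupp_P p_*$, where by remark \ref{Gysin_on_Chow} the fundamental class $\eta_P(X)\in\chow n P$ is the usual cycle class of $s(X)$; expanding it in the basis $1,c_1,\dots,c_1^n$ of $CH^*(P)$ over $CH^*(X)$, the classical identity $p_*s_*(1)=1$ forces the coefficient of $c_1^n$ to be $1$, and by the very definition of $\lef n P$ (paragraph \ref{proj_bdle_iso}) only that top coefficient survives in the composite $s^*\circ\lef n P$. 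If you want to repair your argument, you need a computation of this kind — pinning down the top coefficient of $\eta_P(X)$ — rather than an identification of the whole class or a reduction to the canonical section.
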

\begin{proof}
In this proof, we work in the category $\dmgme$.

Let $\eta_P(X)$ be the motivic fundamental class associated with $s$
 (see Definition \ref{df:motivic_fdl_class}).
 According to Lemma \ref{lm:Gysin&fdl_class_retraction}, we obtain:
$s^*=\eta_P(X) \ecupp_P p_*$.

Let $E/X$ be the vector bundle on $X$ such that $P=\PP(E)$.
Let $\lambda$ be the canonical dual line bundle on $P$.
If we consider the line bundle $L=s^{-1}(\lambda^\vee)$ on $X$,
 the section $s$ corresponds uniquely to a monomorphism
 $L \rightarrow E$ of vector bundles on $P$.
 We consider the following vector bundle on $P$:
$$
F=\lambda \otimes p^{-1}(E/L).
$$
Then the canonical morphism:
$$
\lambda^\vee \rightarrow p^{-1}(E) \rightarrow p^{-1}(E/L)
$$
made by the canonical inclusion and the canonical projection
induces a section $\sigma$ of $F/P$ which is transversal 
to the zero section $s_0^F$ of $F/P$ and such that the following
square is cartesian:
$$
\xymatrix@=16pt{
X\ar^s[r]\ar[d] & P\ar^\sigma[d] \\
P\ar^{s_0^F}[r] & F.
}
$$
Thus, according to Lemma \ref{lm:Gysin&fdl_class},
 we get: $\eta_P(X)=\mathfrak c_n(F)$.

The result now follows from the computation of 
 the top Chern class $c_n(F)$ in $CH^n(P)$
 and Lemma \ref{lm:key_computation_cycle_class}.
%Through the isomorphism \eqref{cohm_in_DMgm}, the morphism
% $\eta_P(X)$ corresponds to an element of $\cohm {2n} n P$.
% According to the 
%
%Consider the basis $1,c_1,...,c_1^n$ of
%the $CH^*(X)$-module $CH^*(P)$, where $c_1$ is the first Chern
%class of the canonical dual line bundle.
%From the relation $p_*s_*(1)=1$ in $CH^*(X)$,
%we obtain that the coefficient of $c_1^n$ in $\eta_P(Z)$ 
%relative to this basis is $1$. This concludes by definition
%of $\lef n P$ (cf paragraph \ref{proj_bdle_iso}).
\end{proof}

\begin{rem} \label{strong_normalization}
As a corollary, we obtain the following reinforcement of Proposition
\ref{prop:purity}, more precisely of the normalization condition
for the purity isomorphism~:

Let $X$ be a smooth scheme, $P/X$ be a projective bundle of rank $n$, 
and $s:X \rightarrow P$ be a section of $P/X$.
Then, the purity isomorphism $\pur{P,s(X)}$ is
 the inverse isomorphism of the composition
$$
\mot X\dtwist n \xrightarrow{\lef n P} \mot P
 \xrightarrow{(1)} \motx P {s(X)}
$$
where $(1)$ is the canonical map.
\end{rem}

\subsubsection{Gysin morphism of a projection}

The following definition will be a particular case of 
Definition \ref{gysin}.
\begin{df}
\label{Gysin_of_proj}
Let $X$ be a smooth scheme, $P$ be a projective bundle of rank $n$ 
over $X$ and $p:P \rightarrow X$ be the canonical projection.

Using the notation of \eqref{eq:lefschetz4proj}, we put:
$$p^*=\lef n P(-n)[-2n]:\mot X \rightarrow \mot P(-n)[-2n]$$
and call it the Gysin morphism of $p$.
\end{df}
%From the compatibility of the projective bundle isomorphism with pullback,
%we obtain an evident functoriality for this Gysin morphism~:
\begin{lm}
\label{funct_gysin_proj}
Let $P$, $Q$ be projective bundles over a smooth scheme $X$
 of respective ranks $n$, $m$. Consider the following projections~:
$$
\xymatrix@R=-1pt@C=30pt{
& P\ar^/-1pt/p[rd] & \\
P \times_X Q\ar_/6pt/{p'}[rd]\ar^/6pt/{q'}[ru] & & X \\
& Q\ar_/-1pt/q[ru] &
}
$$

Then, the following diagram is commutative~:
$$
\xymatrix@R=-1pt@C=30pt{
& \mot P\dtwist{-m}\ar^/-9pt/{q^{\prime*}}[rd] & \\
\mot X\ar_/-2pt/{q^*}[rd]\ar^/-2pt/{p^*}[ru]
 & & \mot{P \times_X Q}\dtwist{-n-m} \\
& \mot Q\dtwist{-n}\ar_/-9pt/{p^{\prime*}}[ru] &
}
$$
\end{lm}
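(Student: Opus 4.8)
The plan is to reduce everything to Lemma~\ref{lm:double_proj_iso} by unwinding the definition of the Gysin morphism of a projection (Definition~\ref{Gysin_of_proj}). Concretely, recall that $p^* = \lef n P (-n)[-2n]$, where $\lef n P$ is the split monomorphism $\mot X(n)[2n] \to \mot P$ coming from the projective bundle decomposition of $\mot P$. The composite ${q'}^* \circ p^*$ (suitably twisted back) is therefore, after untwisting, the composite $\mot X(n)[2n] \xrightarrow{\lef n P} \mot P \xrightarrow{?} \mot{P\times_X Q}(m)[2m]$, where the second map is the appropriate twist of the Gysin morphism of the projective bundle $q' : P\times_X Q \to P$ (which is a projective bundle of rank $m$, with $\lambda'_Q$ playing the role of the canonical dual line bundle). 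First I would make precise what this second arrow is: by Definition~\ref{Gysin_of_proj} applied over the base $P$, it is $\lef m {P\times_X Q}$ where here the ambient projective bundle is $q'$, i.e. the split mono built from $\chern 1 {\lambda'_Q}$ and $q'_*$.

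The key computational step is to identify, inside the iterated projective bundle decomposition of $\mot{P\times_X Q}$, the composite $\lef m{P\times_X Q}\circ\lef n P$ with the single split monomorphism $\mot X(n+m)[2(n+m)] \to \mot{P\times_X Q}$ furnished directly by Lemma~\ref{lm:double_proj_iso} (the summand indexed by $(i,j)=(n,m)$). To do this I would first observe that $\lef n P$ is, by \ref{proj_bdle_iso}, the composite of the inclusion of the top summand $\mot X(n)[2n]\hookrightarrow\bigoplus_{i\le n}\mot X(i)[2i]$ with the inverse of the projective bundle isomorphism $\sum_i \chern 1\lambda^i\ecupp p_*$. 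Applying $q'_*$ (which pulls $\lambda$ back to $\lambda'_P$) and using compatibility of Chern classes and exterior products with pullback, together with the projection formula in the transversal case (as in Lemma~\ref{lm:proj&Gysin_mot}), one rewrites $\chern 1 {\lambda'_Q}^j \ecupp q'_* \circ \lef n P$ in terms of $\chern 1{\lambda'_P}^n\ecupp\chern 1{\lambda'_Q}^j\ecupp p_*$; summing over $j\le m$ and comparing with the formula for $\sigma$ in Lemma~\ref{lm:double_proj_iso} shows that $\lef m{P\times_X Q}\circ\lef n P$ lands in the $(n,\bullet)$-block and equals the inverse of $\sigma$ precomposed with the inclusion of the $(n,m)$-summand.

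By symmetry of the statement of Lemma~\ref{lm:double_proj_iso} in $P$ and $Q$ — the morphism $\sigma$ there treats $\lambda'_P$ and $\lambda'_Q$ on equal footing — the composite ${p'}^* \circ q^*$ is identified with exactly the same split monomorphism $\mot X(n+m)[2(n+m)]\to\mot{P\times_X Q}$ (again the $(n,m)$-summand of $\sigma^{-1}$). Hence ${q'}^*p^* = {p'}^*q^*$ after the appropriate twist, which is the asserted commutativity. The main obstacle I anticipate is purely bookkeeping: keeping track of which canonical line bundle ($\lambda$, $\lambda_P$, $\lambda'_P$, etc.) and which Tate twist sits where, and verifying that the two ways of reaching the $(n,m)$-summand genuinely coincide rather than differing by a permutation of tensor factors or a sign. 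Since the relevant identification $\ZZ(n)[2n]\otimes\ZZ(m)[2m]\cong\ZZ(n+m)[2(n+m)]$ is the canonical one and (as remarked after \ref{cor:proj_gysin}) such identifications are harmless in this article, the sign issues should not arise, but the indexing must be handled carefully.
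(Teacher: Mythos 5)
Your argument is correct and is essentially the paper's own proof: both edge morphisms are identified, after untwisting, with the composite of the obvious split monomorphism onto the $(n,m)$-summand with the inverse of the isomorphism of Lemma \ref{lm:double_proj_iso}, using compatibility of Chern classes and of exterior products with pullbacks. (Your appeal to the projection formula via Lemma \ref{lm:proj&Gysin_mot} is superfluous—naturality of Chern classes and of the exterior product under pullback already suffices—but this does not affect the argument.)
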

\begin{proof}
Indeed, using the compatibility of the motivic Chern class with pullback
 (cf \ref{motivic_chern}),
 we see that both composite morphisms $q^{\prime *}p^*$ and $p^{\prime *}q^*$ are equal 
 (up to twist and suspension) to the composite
$$
\mot X\dtwist{n+m}
 \rightarrow \bigoplus_{i \leq n, j \leq m} \mot X\dtwist{i+j}
 \rightarrow \mot{P \times_X Q},
$$
where the first arrow is the obvious split monomorphism and the second
arrow is the inverse isomorphism to the one constructed in
Lemma \ref{lm:double_proj_iso}.
\end{proof}

\subsubsection{General case}

The following lemma is all we need to finish the construction
of the Gysin morphism of a projective morphism~:
\begin{lm}
\label{lm:indep_facto}
Consider a commutative diagram
$$
\xymatrix@R=-1pt@C=14pt{
& P\ar^p[rd] & \\
Y\ar_j[rd]\ar^i[ru] & & X \\
& Q\ar_q[ru] &
}
$$
where $X$ and $Y$ are smooth schemes, 
$i$ (resp. $j$) is a closed immersion of codimension $n+d$ (resp. $m+d$), 
$P$ (resp. $Q$) is a projective bundle over $X$ of dimension $n$ (resp. $m$)
 with projection $p$ (resp. $q$).

Then, the following diagram is commutative
\begin{equation}
\label{lm:indep_facto_eq1}
\begin{split}
\xymatrix@R=1pt@C=35pt{
& \mot{P)}\dtwist{m}
   \ar^/-7pt/{i^*}[rd] & \\
\mot X\dtwist{n+m}\ar_{q^*}[rd]
                  \ar^{p^*}[ru]
 & & \mot Y\dtwist{n+m+d}. \\
& \mot{Q}\dtwist n\ar_/-7pt/{j^*}[ru] &
}
\end{split}
\end{equation}
\end{lm}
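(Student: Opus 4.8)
The strategy is to reduce the commutativity of \eqref{lm:indep_facto_eq1} to a single universal situation via a fibre-product trick, in the same spirit as the proof of the symmetry statement in Theorem \ref{thm:assoc}. First I would form the fibre product $R = P \times_X Q$, with its two projections $p' : R \to Q$ and $q' : R \to P$ over $X$, and its canonical projection $\rho : R \to X$. Since $i : Y \to P$ and $j : Y \to Q$ both lie over $X$, the diagonal-type map $Y \to R$ induced by $(i,j)$ is a closed immersion $\kappa : Y \to R$ of pure codimension $n + m + d$, with $q' \kappa = i$ and $p' \kappa = j$. The point is that the whole diagram \eqref{lm:indep_facto_eq1} maps into (and is controlled by) the diagram attached to $R$.

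Concretely, I would argue as follows. By Lemma \ref{funct_gysin_proj}, the two Gysin morphisms of projections satisfy ${q'}^* p^* = {p'}^* q^* : \mot X \to \mot R(-n-m)[-2(n+m)]$; call this common morphism $\rho^\star$ (it is also the Gysin morphism of $\rho$ in the sense of Lemma \ref{lm:double_proj_iso}). By Corollary \ref{cor:assoc} applied to the composites $Y \xrightarrow{\kappa} R \xrightarrow{q'} P$ and $Y \xrightarrow{\kappa} R \xrightarrow{p'} Q$, we have $\kappa^* \circ {q'}^* = (q'\kappa)^* = i^*$ — wait, this needs care, since ${q'}^*$ here is the Gysin morphism of the \emph{projection} $q'$, not of a closed immersion. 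So instead the relevant statement is the compatibility of the Gysin morphism of a closed immersion with the Gysin morphism of a projection through which it factors: I need $i^* \circ {q'}^{\,*}_{\mathrm{proj}} = \kappa^* \circ \text{(something)}$. The cleanest route is: $q' \circ \kappa = i$ need not help directly; rather, I use that $\kappa$ is transversal to the fibres, and Lemma \ref{lm:proj&Gysin_mot} (applied with the projective bundle $q' : R \to Q$ pulled back over the closed pair $(P, Y)$, where $Y \hookrightarrow P$ sits via $i$) gives the commutative square
$$
\xymatrix@C=30pt@R=18pt{
\mot{P}\dtwist m \ar^-{i^*}[r] \ar_{\lef m {R}_{/P}}[d] & \mot Y\dtwist{n+d+m} \ar^{\lef m {R}_{/Y}}[d] \\
\mot{R}\ar^-{\kappa^*}[r] & \mot Y\dtwist{m},
}
$$
after the appropriate twist; here $R = R \times_P P$ is a projective bundle over $P$ of rank $m$ (namely $p$-pullback of $Q$), and over $Y$ it restricts to the rank-$m$ projective bundle $Y \times_X Q$. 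Composing the split monomorphism $\lef m {R}_{/P}$ with $p^*$ recovers, by the projective bundle description in \ref{proj_bdle_iso}, exactly $q'^{\,*}_{\mathrm{proj}} \circ (\text{top edge of } \eqref{lm:indep_facto_eq1})$. The symmetric statement holds on the bottom edge with the roles of $P,Q,i,j$ and $q' , p'$ exchanged.

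Thus both edge composites of \eqref{lm:indep_facto_eq1}, after postcomposition with the split monomorphism $\lef {n+m} {R}$, become equal to $\kappa^* \circ \rho^\star = \kappa^*$ precomposed with the appropriate purity/projective-bundle data — in other words, both become equal to the single morphism $\mot X(-n-m-d)[\cdots] \to \mot Y$ arising from $\kappa : Y \to R$ and the identification of $\mot R$ via Lemma \ref{lm:double_proj_iso}. Since $\lef {n+m} R$ is a split monomorphism, equality after postcomposing with it implies equality of the two edge morphisms, which is the assertion. I would spell this out by first treating the degenerate but instructive case $d = 0$ (where $Y = Z$ meets the bundles in a section and one appeals directly to Proposition \ref{prop:strong_normalisation_cond}), then the general case by the fibre-product reduction above.

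\textbf{Main obstacle.} The delicate point is bookkeeping: one must identify, carefully and with the correct twists and signs, the composite "$\lef n P$ then $i^*$" appearing on the top edge with the restriction to a summand of "$\kappa^*$ on $\mot R$" — this requires matching the projective-bundle decomposition of $\mot R$ coming from $R/X$ (Lemma \ref{lm:double_proj_iso}) with the iterated one coming from $R/P/X$, and checking that $\kappa^*$ respects these decompositions via Lemma \ref{lm:proj&Gysin_mot}. Once the three Gysin morphisms ($i^*$, $j^*$, and the two projection Gysins) are all expressed through the single closed immersion $\kappa$ and the single bundle $R/X$, commutativity is forced; but getting there cleanly, rather than through a tangle of cup-products of Chern classes, is where the real work lies. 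I expect no genuinely new idea is needed beyond the fibre-product trick plus Corollary \ref{cor:assoc} and Lemma \ref{lm:proj&Gysin_mot}.
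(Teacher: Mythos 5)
Your overall route is the paper's: pass to $R=P\times_X Q$, use Lemma \ref{funct_gysin_proj} to identify the two projection Gysin morphisms, and compare each edge of \eqref{lm:indep_facto_eq1} with the Gysin morphism of the diagonal immersion $\kappa=(i,j):Y\to R$. But the two steps by which you propose to make that comparison fail as written. The square you claim to extract from Lemma \ref{lm:proj&Gysin_mot} is not an instance of that lemma (and does not typecheck: its bottom arrow $\kappa^*:\mot R\to\mot Y\dtwist m$ and its right-hand vertical have the wrong sources and targets). Applied to the bundle $R/P$ and the closed pair $(P,Y)$, Lemma \ref{lm:proj&Gysin_mot} only compares $i^*$ with the Gysin morphism of the \emph{base-changed} immersion $\iota:Y\times_X Q\hookrightarrow R$, yielding $\iota^*\circ\lef m {R/P}=\lef m {(Y\times_X Q)/Y}\dtwist{n+d}\circ i^*\dtwist m$. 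To reach $\kappa^*$ you must further factor $\kappa$ as $Y\xrightarrow{s}Y\times_X Q\xrightarrow{\iota}R$, where $s=(1_Y,j)$ is a section of the rank-$m$ projective bundle $Y\times_X Q/Y$, apply Corollary \ref{cor:assoc} to get $\kappa^*=s^*\dtwist{n+d}\circ\iota^*$, and then cancel $s^*\circ\lef m {(Y\times_X Q)/Y}=1$ by Proposition \ref{prop:strong_normalisation_cond}. That normalization is needed for every $d$ -- it is what closes the argument -- and not only in your warm-up case $d=0$; the corrected form of your square, $\kappa^*\circ\lef m {R/P}=i^*\dtwist m$, is precisely the reduced case ($Q=X$) of the lemma being proved and is not an output of Lemma \ref{lm:proj&Gysin_mot} alone.

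Second, your concluding cancellation is not available: $\lef {n+m} R$ has source $\mot X\dtwist{n+m}$, so postcomposing the edge composites (which end at $\mot Y\dtwist{n+m+d}$) with it is meaningless; and even after repair, the compatibility squares for the two edges land in the motives of two different bundles over $Y$ ($Y\times_X Q$ for the top edge, $Y\times_X P$ for the bottom), so no single split monomorphism is available for the comparison. The bookkeeping that works is the insertion just described: it rewrites the top edge as $\kappa^*\circ\lef m {R/P}\circ\lef n P\dtwist m$ and the bottom edge as $\kappa^*\circ\lef n {R/Q}\circ\lef m Q\dtwist n$, and the equality of these two summand inclusions into $\mot R$ is Lemma \ref{funct_gysin_proj} (via Lemma \ref{lm:double_proj_iso}). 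With these corrections your argument becomes, in substance, the paper's proof, which reduces via the diagonal in $P\times_X Q$ to the case $Q=X$ and then runs exactly this graph-section argument using Corollary \ref{cor:assoc}, Lemma \ref{lm:proj&Gysin_mot} and Proposition \ref{prop:strong_normalisation_cond}.
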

\begin{proof}
Considering the diagonal embedding 
$Y \xrightarrow{(i,j)} P \times_X Q$, we divide diagram 
\eqref{lm:indep_facto_eq1} into three parts:
$$
\xymatrix@R=16pt@C=42pt{
& \mot{P}\dtwist m\ar^{i^*}[rd]
   \ar_{p^{\prime*}}[d]
 & \\
\mot X\dtwist{n+m}\ar@/_10pt/_/-4pt/{q^*}[rd]
                  \ar@/^10pt/^/-4pt/{p^*}[ru]
  \ar@{}|/-2pt/{_{(1)}}[r]
 & \mot{P \times_X Q}\ar|-{(i,j)^*}[r]
     \ar@{}^/-30pt/{_{(2)}}[ru]\ar@{}_/-30pt/{_{(3)}}[rd]
 & \mot Y\dtwist{n+m+d}. \\
& \mot{Q}\dtwist n\ar_{j^*}[ru]
   \ar^{q^{\prime*}}[u] 
 & \\
}
$$
The commutativity of part (1) is Lemma \ref{funct_gysin_proj}.
The commutativity of part (2) and that of part (3) are equivalent
 to the case $X=Q$, $q=1_X$ -- and thus $m=0$.

Assume we are in this case.
We introduce the following morphisms where the square (*) is cartesian
 and $\gamma$ is the graph of the $X$-morphism $i$:
%$$
%\xymatrix@R=10pt@C=6pt{
% & Y\ar_\gamma[ld]\ar^{i}[rrd] && \\
%Y \times_X P\ar^-{j'}[rrr]\ar_{p'}[d]
% &&& P\ar^{p}[d] \\
%Y\ar^j[rrr] &&& X.
%}
%$$
$$
\xymatrix@R=4pt@C=26pt{
& P_Y\ar^{p'}[r]\ar_{j'}[dd]\ar@{}|{(*)}[rdd] & Y\ar^j[dd] \\
Y\ar^\gamma[ru]\ar_i[rd] & & \\
& P\ar|p[r] & X
}
$$
Note that $\gamma$ is a section of $p'$.
Thus, Proposition  \ref{prop:strong_normalisation_cond}
 gives: $\gamma^* p^{\prime*}=1$,
and we reduce the commutativity of the diagram \eqref{lm:indep_facto_eq1}
 to that of the following one:
$$
\xymatrix@R=4pt@C=26pt{
& \mot{P_Y}\dtwist{d}\ar@{}|{(5)}[rdd]\ar_-{\gamma^*}[ld]
 & \mot Y\dtwist{n+d}\ar_-{p^{\prime*}}[l] \\
\mot Y\dtwist{n+d}\ar@{}|/18pt/{(4)}[r] & & \\
& \mot P\ar@{}|/-3pt/{j^{\prime*}}[uu]\ar|/-3pt/{\phantom{\big(}}[uu]\ar^-{i^*}[lu]
 & \mot X\dtwist n\ar|-{p^*}[l]\ar_-{j^*}[uu]
}
$$
%$$
%\xymatrix@C=-2pt@R=18pt{
% & \mot Y\dtwist{n+m+d}\ar@{}|{_{(4)}}[d] & \\
%\mot{Y}\dtwist{n+d}\ar^/-5pt/{\gamma^*\dtwist{n+d}}[ru]
% & & \mot{P}\ar_/-10pt/{i^*}[lu]
%                       \ar|-{j^*}[ll] \\
%\mot Y\dtwist{n+m+d}\ar^{{p''}^*}[u]\ar@{}|{_{(5)}}[rru]
% & & \mot P\dtwist m.\ar_{p^*}[u]
%      \ar^-{i^*}[ll]
%}
%$$
Then commutativity of part (4) is Corollary \ref{cor:assoc}
and that of part (5) follows from Lemma 
\ref{lm:proj&Gysin_mot}.
\end{proof}

Let $f:Y \rightarrow X$ be a projective morphism between smooth
schemes. Following the terminology of Fulton (see \cite[\S 6.6]{Ful}),
 we say that $f$ has codimension $d$ if it can be factored into a
closed immersion $Y \rightarrow P$ of codimension $e$ followed by 
the projection $P \rightarrow X$ of a projective bundle of dimension $e-d$.
In fact, the integer $d$ is uniquely determined
(cf \emph{loc.cit.} appendix B.7.6).
Using the preceding lemma, we can finally introduce 
the general definition~:
\begin{df}
\label{gysin}
Let $X$, $Y$ be smooth schemes
 and $f:Y \rightarrow X$ be a projective morphism of codimension $d$.

We define the Gysin morphism associated with $f$ in $\dmgm$
$$
f^*:\mot X \rightarrow \mot Y\dtwist d
$$
by choosing a factorisation of $f$ into $Y \xrightarrow i P
\xrightarrow p X$ where $i$ is a closed immersion of pure codimension
$n+d$ and $p$ is the projection of a projective bundle of rank $n$,
 and putting~:
$$
f^*=\left\lbrack \mot X\dtwist{n} \xrightarrow{\lef n P} \mot P
 \xrightarrow{i^*} \mot Y\dtwist{n+d} \right\rbrack \dtwist{-n},
$$
definition which does not depend upon the choices made according
to the previous lemma.
\end{df}

\begin{rem} \label{Gysin&pushout}
In \cite[3.11]{Deg9}, we prove that
the Gysin morphism of a projective morphism $f$ induces the
usual pushout on the part of motivic cohomology corresponding to
Chow groups.
\end{rem}

%%%%%%%%%%%%%%%%%%%%%%%%%%%%%%%%%%%%%%%%%%%%%%%%%%%%%%%%%%%%%%%%%%%%

\subsection{Properties}

\subsubsection{Functoriality}

\begin{prop}
\label{Gysin_funct}
Let $X$, $Y$, $Z$ be smooth schemes and 
$Z \xrightarrow g Y \xrightarrow f X$ be projective morphisms of
respective codimensions $m$ and $n$.

Then, in $\dmgm$, we get the equality~:
 $g^* \circ f^*=(fg)^*$. 
\end{prop}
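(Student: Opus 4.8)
The plan is to reduce to factorizations in which everything is either a closed immersion or a projective bundle projection, and then to chase the resulting diagram using the two functoriality results already proved: Corollary \ref{cor:assoc} (composition of Gysin morphisms of closed immersions) and Lemma \ref{lm:indep_facto} (independence of the Gysin morphism of a projection from the chosen factorization). First I would fix a factorization $Y \xrightarrow{i} P \xrightarrow{p} X$ of $f$ with $i$ a closed immersion of pure codimension $n+a$ into a projective bundle $P/X$ of rank $a$, and a factorization $Z \xrightarrow{j} Q \xrightarrow{q} Y$ of $g$ with $j$ a closed immersion of pure codimension $m+b$ into a projective bundle $Q/Y$ of rank $b$. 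The issue is that these do not immediately glue into a factorization of $fg$, because $Q$ lives over $Y$, not over $X$.

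The key construction is to base-change $P$ along $q$, or better, to pull back along the whole tower: consider $Q \times_Y P$, which is a projective bundle of rank $a$ over $Q$, together with the section-like closed immersion $Q \xrightarrow{(1,i\circ(\text{nothing}))}$ — more precisely, I would use the graph of the map $Q \to P$ obtained from $Q \xrightarrow{q} Y \xrightarrow{i} P$ to realize $Q$ as a closed subscheme of $Q\times_Y P$ compatibly with the projection to $P$. Composing, $Z$ embeds into $Q \times_Y P$, and $Q\times_Y P$ is a projective bundle over $P$; since $P$ is a projective bundle over $X$, the composite $Q\times_Y P \to P \to X$ need not be a projective bundle projection directly, but $Q\times_Y P$ itself maps to $X$ through a tower of two projective bundles, and by Lemma \ref{lm:double_proj_iso} (or by iterating Definition \ref{Gysin_of_proj} and Lemma \ref{funct_gysin_proj}) the composite of the two Gysin morphisms of projections equals the Gysin morphism one would assign to such a tower. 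This gives a legitimate factorization of $fg$ through a closed immersion $Z \hookrightarrow Q\times_Y P$ followed by the tower $Q\times_Y P \to X$, which by Definition \ref{gysin} and Lemma \ref{lm:indep_facto} computes $(fg)^*$.

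With this in hand, the proof becomes a diagram chase assembling: (a) $g^* = (j^* \circ \lef b Q)\dtwist{-b}$ by definition; (b) $f^* = (i^* \circ \lef a P)\dtwist{-a}$ by definition; (c) the identification, via Lemma \ref{funct_gysin_proj}, of $\lef a P$ pulled back over $Q$ with the bundle-map data over $Q\times_Y P$; (d) Corollary \ref{cor:assoc} to compose the closed immersion $Z \hookrightarrow Q \hookrightarrow Q\times_Y P$ into the single Gysin morphism of $Z \hookrightarrow Q\times_Y P$; and (e) the base-change compatibility of $\lef n {-}$ with pullbacks (the motivic projective bundle isomorphism is natural), which lets the square relating $P$ over $X$ and $Q\times_Y P$ over $Q$ commute. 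The main obstacle is precisely step (e) together with bookkeeping the twists: one must check that the split monomorphism $\lef b Q$ followed by pullback of $\lef a P$ agrees, after the appropriate twist, with the bundle-map data defining the Gysin morphism of the projection $Q\times_Y P \to X$ in the chosen factorization of $fg$ — this is where the compatibility of motivic Chern classes with pullbacks (paragraph \ref{motivic_chern}) and Lemma \ref{lm:double_proj_iso} do the work. Once that square commutes, the remaining commutativities are all instances of Corollary \ref{cor:assoc} and Lemma \ref{lm:indep_facto}, and the equality $g^*\circ f^* = (fg)^*$ falls out.
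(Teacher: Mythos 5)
Your overall plan (reduce to closed immersions and bundle projections, then chase a diagram with Corollary \ref{cor:assoc} and Lemma \ref{lm:indep_facto}) is the right kind of plan, but the central construction has a genuine gap. The scheme you call $Q\times_Y P$ does not exist as written: $P$ is an $X$-scheme, not a $Y$-scheme, so the only sensible reading is $Q\times_X P$, the pullback of $P/X$ along $Q\to X$. That is indeed a projective bundle of rank $a$ over $Q$, and the graph of $Q\to P$ does give a closed immersion $Q\to Q\times_X P$, hence a closed immersion $Z\hookrightarrow Q\times_X P$. But the next claim, that this scheme is a projective bundle over $P$, is false: its fibre over a point of $P$ lying above $x\in X$ is the fibre of $Q\to X$ over $x$, i.e.\ a projective bundle over the fibre of $Y\to X$ (empty if $x\notin f(Y)$), not a projective space. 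So there is no ``tower of two projective bundles'' $Q\times_X P\to P\to X$; and even if there were, neither Definition \ref{gysin} nor Lemma \ref{lm:indep_facto} applies to such a factorization of $fg$ --- the definition requires a closed immersion into a \emph{single} projective bundle over $X$, the lemma only compares two factorizations of that shape, and Lemmas \ref{lm:double_proj_iso} and \ref{funct_gysin_proj} treat the product $P\times_X Q$ of two bundles over the \emph{same} base, not towers. Hence the step ``this computes $(fg)^*$'' is unsupported, and it is exactly the hard point.

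The missing idea is to run the reduction in the other direction, which is what the paper does: since $fg$ is projective, choose a factorization $Z\xrightarrow{j} Q\xrightarrow{q} X$ of $fg$ through a projective bundle $Q$ over $X$, together with a factorization $Y\xrightarrow{i} P\xrightarrow{p} X$ of $f$. Then $g$ automatically factors through the pullback $Q_Y=Q\times_X Y$ via the closed immersion $(j,g):Z\to Q_Y$, so that every bundle in sight is pulled back from $X$ and the product $P\times_X Q$ is available. The diagram then decomposes into: the square relating $P$, $P\times_X Q$, $Y$, $Q_Y$, which is the base-change compatibility of $i^*$ with the bundle $Q$ given by Lemma \ref{lm:proj&Gysin_mot} (this replaces your step (e), which you would otherwise have to prove by hand); the triangle of the two projections, which is Lemma \ref{funct_gysin_proj}; and the part involving $j^*$ and the immersion $Z\to Q_Y\to P\times_X Q$, which is handled by Lemma \ref{lm:indep_facto} together with Corollary \ref{cor:assoc}. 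Your steps (a)--(d) survive in that setting, but the construction of the factorization of $fg$ must come first, not be assembled from $Q/Y$ and $P/X$.
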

\begin{proof}
We first choose projective bundles $P$, $Q$
over $X$, of respective dimensions $s$ and $t$, fitting into the following
diagram with $R=P \times_X Q$ and $Q_Y=Q \times_X Y$:
$$
\xymatrix@C=16pt@R=6pt{
&& Q\ar^q@/^12pt/[rrddd] && \\
& & R\ar^{q'}[rd]\ar_{p'}[u] & & \\
& Q_Y\ar^{q''}[rd]\ar^{i'}[ru] & & P\ar|p[rd] & \\
Z\ar|g[rr]\ar|k[ru]\ar^j@/^12pt/[rruuu] && Y\ar|f[rr]\ar^i[ru] && X.
}
$$
The prime exponent of a symbol indicates that the morphism is deduced
by base change from the morphism with the same symbol. We then have to
prove that the following diagram of $\dmgm$ commutes~:
$$
\xymatrix@C=4pt@R=12pt{
&\ar|/3pt/{(2)}@{}[dd] 
 & \mot Q\dtwist t\ar^{{p'}^*}[d]\ar^{j^*}@/^34pt/[rrddd]
 &\ar|/3pt/{(3)}@{}[dd] & \\
& & \mot{R}\dtwist{s+t}\ar_{{i'}^*}[rd]\ar|{(1)}@{}[dd]
  & & \\
& \mot P\dtwist s\ar_{{q'}^*}[ru]\ar_{i^*}[rd]
 & & \mot{Q_Y}\dtwist{n+t}\ar_{k^*}[rd] & \\
\mot X\ar_{p^*}[ru]\ar^{q^*}@/^38pt/[rruuu]
 && \mot Y\dtwist n\ar_{{q''}^*}[ru]
 && \mot Z\dtwist{n+m}.
}
$$
The commutativity of part (1) is a corollary of Lemma
\ref{lm:proj&Gysin_mot}, that of part (2) is 
Lemma \ref{funct_gysin_proj} and that of part (3) follows
from Lemma \ref{lm:indep_facto} and Corollary \ref{cor:assoc}.
\end{proof}

\subsubsection{Projection formula and excess of intersection}

From Definition \ref{gysin}
 and Proposition \ref{prop:gysin_transversal}
  we directly obtain the following proposition~:
\begin{prop}
\label{trivial_proj_form_Gysin}
Consider a cartesian square of smooth schemes
\begin{equation} \label{eq:trivial_proj_form_Gysin}
\xymatrix@=10pt{
T\ar_q[d]\ar^g[r] & Z\ar^p[d] \\
Y\ar^f[r] & X
}
\end{equation}
such that $f$ and $g$ are projective morphisms of the same codimensions.

Then, the relation $f^*p_*=q_* g^*$ holds in $\dmgm$.
\end{prop}
%\noindent \textsc{Proof}~: This follows from the definition of the
%Gysin morphism in view of proposition \ref{prop:gysin_transversal} and
%the compatibility of the projective bundle isomorphism with pullback
%(in case $f$ is the projection of a projective bundle). \hfill
%$\square$ 
%
%\bigskip

\num Consider now a cartesian square of shape \eqref{eq:trivial_proj_form_Gysin}
such that $f$ (resp. $g$)
is a projective morphism of codimension $m$ (resp. $m$).
Then $m \leq n$ and we call $e=n-m$ the \emph{excess of dimension}
 attached with \eqref{eq:trivial_proj_form_Gysin}.

We can also associate with the above square a vector bundle $\xi$ of rank $e$,
called the \emph{excess bundle}.
Choose $Y \xrightarrow i P \xrightarrow \pi X$ a factorisation of $f$ 
such that $i$ is a closed immersion of codimension $r$ 
and $\pi$ is the projection of a projective bundle of dimension $s$.
We consider the following cartesian squares:
\begin{equation*}
\xymatrix@=10pt{
T\ar_q[d]\ar^{i'}[r] & Q\ar^{\pi'}[r]\ar[d] & Z\ar^p[d] \\
Y\ar^i[r] & P\ar^\pi[r] & X
}
\end{equation*}
Then $N_TQ$ is a sub-vector bundle of $q^{-1}N_YP$
 and we put $\xi=q^{-1}N_YP/N_TQ$. This
definition is independent of the choice of $P$ (see \cite{Ful}, 
 proof of prop. 6.6). 

The following proposition is now
a straightforward consequence of Definition \ref{gysin}
 and the second case of Proposition \ref{prop:gysin_transversal}~: 
\begin{prop}
\label{excess}
Consider the above notations.

Then, the relation
 $f^*p_*=\big(\chern e \xi \ecupp q_*\dtwist m\big) \circ g^*$
holds in $\dmgm$.
\end{prop}
%\begin{proof}
%After choosing a factorisation of $f$ into a
%closed immersion followed by a projection, and considering its pullback
%along $Z \rightarrow X$, it is merely a corollary of the second case
%of proposition \ref{prop:gysin_transversal}. 
%\end{proof}

\subsubsection{Compatibility with the Gysin triangle}

\begin{prop}
\label{Gysin_morph&Gysin_tri}
Consider a topologically cartesian square of smooth schemes
$$
\xymatrix@=8pt{
T\ar^j[r]\ar_g[d] & Y\ar^f[d] \\
Z\ar^i[r] & X
}
$$
such that $f$ and $g$ are projective morphisms, $i$ and $j$ are
closed immersions. Put $U=X-Z$, $V=Y-T$
 and let $h:V \rightarrow U$ be the projective morphism
  induced by $f$. Let $n$, $m$, $p$, $q$ be respectively 
the relative codimensions of $i$, $j$, $f$, $g$.

Then the following diagram is commutative
$$
\xymatrix@C=9pt@R=15pt{
\mot{V}\dtwist p\ar[r] 
 & \mot{Y}\dtwist p\ar^-{j^*}[rr]
 && \mot{T}\dtwist{m+p}\ar^-{\partial_{Y,T}}[rrr]
 &&& \mot{V}\dtwist p[1] \\
 \mot{U}\ar[r]\ar^{h^*}[u]
 & \mot{X}\ar^-{i^*}[rr]\ar^{f^*}[u]
 && \mot{Z}\dtwist{n}\ar^-{\partial_{X,Z}}[rrr]\ar_{g^*\dtwist n}[u]
 &&& \mot{U}[1]\ar_{h^*}[u], \\
}
$$
where the two lines are the obvious Gysin triangles.
\end{prop}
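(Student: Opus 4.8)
The plan is to reduce the commutativity of the big diagram to the already-established compatibilities between Gysin morphisms of closed immersions and the Gysin triangle, by choosing a compatible factorization of $f$. First I would factor $f:Y \rightarrow X$ as $Y \xrightarrow{i_0} P \xrightarrow{\rho} X$ with $i_0$ a closed immersion of pure codimension $r$ into a projective bundle $\rho:P \rightarrow X$ of dimension $s$ (so $p = r - s$), and then base-change everything along $i:Z \rightarrow X$: set $P_Z = P \times_X Z$, with projection $\rho_Z:P_Z \rightarrow Z$ and the induced closed immersion $T \xrightarrow{g_0} P_Z$ of codimension $r$; note that $g$ factors as $T \xrightarrow{g_0} P_Z \xrightarrow{\rho_Z} Z$, and $i':P_Z \rightarrow P$ is a closed immersion fitting into a cartesian square over $i$. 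Likewise write $P_U = P \times_X U = P - P_Z$ and $P_V = P \times_Y V = \rho_Z^{-1}(\ldots)$, so the restriction of $\rho$ to $P_U$ and the induced immersions are all available.

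Next I would break the target diagram into two horizontally stacked pieces. The left piece — commutativity of the squares involving $\mot U \rightarrow \mot X$, $\mot V \dtwist p \rightarrow \mot Y \dtwist p$ under $f^*$ and $h^*$ — follows from the analogous statement for closed immersions, namely square $(1)$ in the diagram of \ref{prop:gysin_transversal} applied to the cartesian morphism of closed pairs $(P, P_Z) \rightarrow$ $(P, \ldots)$... more precisely it reduces via the factorization to: (a) compatibility of the Gysin morphism $\lef n \cdot$ of a projective bundle with the localization triangle, which is exactly Lemma \ref{lm:proj&Gysin_mot} (taking $r = n$ there), and (b) the compatibility of $i_0^*$, $g_0^*$ with the Gysin triangles of the pairs $(P, Z)$-type, which is square $(2)$ of the diagram preceding \ref{prop:gysin_transversal} (the refined base change formula), using Corollary \ref{cor:assoc} to handle the composition $g_0^* \circ (i')^*$ versus $(i')^* \circ g_0^*$-type identifications. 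The middle square (the one genuinely involving $\partial_{X,Z}$ and $\partial_{Y,T}$ together with $i^*$, $j^*$ and $g^* \dtwist n$) is handled the same way: after inserting the projective bundle factorization, it becomes an instance of Lemma \ref{lm:proj&Gysin_mot} stacked with square $(2)$ of \ref{prop:gysin_transversal}'s diagram, i.e. the statement that $(i_0, g_0)_!$ intertwines the two residue maps, combined with Corollary \ref{cor:assoc} to rewrite the composite Gysin morphisms. The right square, involving only $h^*$ and the connecting maps $\mot T \dtwist{m+p} \rightarrow \mot V \dtwist p [1]$ and $\mot Z \dtwist n \rightarrow \mot U[1]$, follows formally from the morphism-of-triangles structure once the left and middle squares commute, or again directly from the functoriality of the Gysin triangle under the excisive/cartesian morphism $(P_U, P_V) \rightarrow (P, P_Z)$-type data.

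Concretely, the cleanest route is: first prove the proposition in the special case where $f$ is the projection $\rho$ of a projective bundle — here $f^* = \lef s P \dtwist{-s}$ and the whole diagram is just a $\dtwist{-s}$-twist of the diagram in Lemma \ref{lm:proj&Gysin_mot} with $r = s$ and the roles of $(Y,Z)$ there played by $(X,Z)$ here — and second prove it when $f = i_0$ is a closed immersion, where it is precisely the commutativity of squares $(1)$ and $(2)$ of the diagram recalled just before Proposition \ref{prop:gysin_transversal}, extended by one more column via the naturality of the residue (square analogous to $(2)$ of Theorem \ref{thm:assoc}, or simply Theorem \ref{thm:assoc} itself applied to the cartesian square formed by $i_0$, $i'$, $g_0$, $j$). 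The general case then follows by horizontally composing these two diagrams along the intermediate objects $\mot P \dtwist{\,\cdot}$, $\mot{P_Z}\dtwist{\,\cdot}$, $\mot{P_U}[1]$, using Proposition \ref{Gysin_funct} (functoriality $g^* f^* = (fg)^*$) to identify the composite with $f^*$, $g^*$, $h^*$.

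The main obstacle I anticipate is purely bookkeeping rather than conceptual: keeping track of the many Tate twists and of the precise cartesian squares needed so that each invocation of Lemma \ref{lm:proj&Gysin_mot}, of square $(2)$ in the refined-base-change diagram, and of Theorem \ref{thm:assoc} applies on the nose — in particular verifying that the relative codimensions match ($r$ for both $i_0$ and $g_0$, $s$ for both $\rho$ and $\rho_Z$, $n = r - (\text{codim of }i') $ etc.) and that the connecting map on the $U/V$ side really is the $h^*$-image of the one on the $X/Z$ side. There is also the usual sign subtlety coming from the anticommuting square (the $-1$ on $\ZZ[1]\otimes\ZZ[1]$) seen already in Theorem \ref{thm:assoc}; one must check it does not reappear here, which it should not, since we are composing morphisms of triangles rather than forming an iterated cone. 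Once the factorization is fixed and the three building-block diagrams are cited correctly, the proof is a diagram chase with no further input.
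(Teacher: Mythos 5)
Your proposal is correct and matches the paper's own (very terse) proof, which is exactly: unwind definition \ref{gysin} to factor $f$ through a projective bundle, treat the bundle projection via Lemma \ref{lm:proj&Gysin_mot} and the closed immersion via Theorem \ref{thm:assoc} (its squares $(1)$ and $(2)$), and stack the two diagrams. Your passing suggestion to use square $(2)$ of the refined base-change diagram before \ref{prop:gysin_transversal} is the only off note (its vertical maps are pushforwards, not Gysin morphisms), but you correctly fall back on Theorem \ref{thm:assoc}, so nothing is missing.
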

\begin{proof}
Use the definition of the Gysin morphism
 and apply Lemma \ref{lm:proj&Gysin_mot}, Theorem \ref{thm:assoc}.
%By construction of the Gysin morphism, we have only to consider
%the case where $f$ is the projection of a projective bundle or 
%a closed immersion.
%
%In the first case, this is proposition \ref{trivial_proj_form_Gysin}.
%In the second case, the commutativity follows from 
%proposition \ref{trivial_proj_form_Gysin} and 
%theorem \ref{thm:assoc}.
\end{proof}

\subsubsection{Gysin morphisms and transfers in the \'etale case}

\num \label{transpose}
In \cite{Deg5bis}, paragraphs 1.1 and 1.2 we have introduced another Gysin morphism for 
a finite equidimensional morphism $f:Y \rightarrow X$. 
Indeed, the transpose of the graph of $f$ gives
 a finite correspondence $\tra f$ from $X$ to $Y$
 which induces a morphism $\tra f_*:\mot X \rightarrow \mot Y$ in $\dmgm$.
\begin{prop}
\label{prop:Gysin=transpose}
Let $X$ and $Y$ be smooth schemes, and
$f:Y \rightarrow X$ be an \'etale cover.

Then, $f^*=\tra f_*$.
\end{prop}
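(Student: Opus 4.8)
The plan is to verify the statement $f^*={}^tf_*$ for an \'etale cover $f:Y\to X$ by reducing to a local situation where $f$ becomes a disjoint union of isomorphisms, and to check compatibility with both \'etale localization and the decomposition of $Y$ into connected components. First I would note that both sides of the asserted equality are morphisms $\mot X\to\mot Y$ (since an \'etale cover is finite of codimension $0$, so $d=0$), and both are compatible with the Mayer-Vietoris triangle \ppmv of proposition \ref{properties_motx}: the right-hand side ${}^tf_*$ is additive in the graph correspondence, and the left-hand side $f^*$ is known to be compatible with such triangles by proposition \ref{Gysin_morph&Gysin_tri} together with its base-change properties (proposition \ref{trivial_proj_form_Gysin}). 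Hence it suffices to check the equality after restricting $X$ to a Zariski (indeed Nisnevich) neighbourhood of each point; by the structure of \'etale covers, after such a restriction $Y$ splits as a finite disjoint union $\coprod_\alpha Y_\alpha$ with each $Y_\alpha\to X$ a closed-open immersion composed with an isomorphism, or more precisely one may assume $f$ is split.

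Next I would reduce further using \ppadd: if $Y=\coprod_\alpha Y_\alpha$, then both $f^*$ and ${}^tf_*$ decompose as the sum of the corresponding morphisms for the $Y_\alpha\to X$. For ${}^tf_*$ this is immediate from additivity of finite correspondences; for $f^*$ one uses functoriality of the Gysin morphism (proposition \ref{Gysin_funct}) applied to the open immersions $Y_\alpha\hookrightarrow Y$, whose Gysin morphisms are the split projections in the evident direct sum decomposition $\mot Y=\bigoplus_\alpha\mot{Y_\alpha}$. So we are reduced to the case where $f:Y\to X$ is an isomorphism. In that case ${}^tf_*=(\mot f)^{-1}$ by definition of the transpose correspondence of an isomorphism, and on the other side $f^*$ is the Gysin morphism of a codimension-$0$ projective, in fact finite, morphism that is an isomorphism; by the construction in definition \ref{gysin}, factoring $f$ through any projective bundle, or simply directly, one sees $f^*=(\mot f)^{-1}$ as well. (Alternatively, for an isomorphism $f$ the graph $\Gamma_f\subset Y\times_k X$ is transversal to everything in sight, and the transversal case of proposition \ref{prop:gysin_transversal} identifies the refined Gysin morphism with the pushforward, yielding the claim.)

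The main obstacle I expect is making the \'etale-localization step rigorous at the level of morphisms in $\dmgm$: one needs to know that a morphism $\mot X\to\mot Y$ which vanishes after pulling back along every element of a Nisnevich cover of $X$ already vanishes, i.e. some form of Nisnevich descent for the relevant Hom groups, and that both $f^*$ and ${}^tf_*$ are genuinely functorial/natural with respect to such base change (for $f^*$ this is proposition \ref{trivial_proj_form_Gysin}; for ${}^tf_*$ one uses the base-change formula for finite correspondences). Since $\dmgm$ does not obviously satisfy descent, the cleanest route is probably to avoid descent entirely and instead argue directly: any \'etale cover $f:Y\to X$ is Nisnevich-locally on $X$ split, but to conclude globally one can use that $f$ fits into a diagram of the shape to which Mayer-Vietoris \ppmv applies and induct on the number of opens in a trivializing cover, at each stage comparing the two candidate morphisms on the pieces where $Y$ splits and on the overlaps. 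The bookkeeping with signs and with the identifications of the motives of the pieces is the part requiring care; the geometric input — that for an isomorphism the Gysin morphism is the inverse of $\mot f$ and equals the transpose — is essentially formal.
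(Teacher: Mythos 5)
There is a genuine gap, and it sits exactly at the step you flag as the ``main obstacle''. Your reduction rests on the claim that an \'etale cover splits after Zariski (or Nisnevich) localization of $X$; this is false. A finite \'etale cover only splits after an \emph{\'etale} base change: already for $X=\spec k$ and $Y=\spec L$ with $L/k$ a nontrivial finite separable extension there is no further Zariski or Nisnevich localization available, and $f$ is not split (more generally, over a Nisnevich/henselian local base a finite \'etale cover decomposes according to finite separable residue extensions, which need not be trivial). So the ``trivializing cover'' on which your Mayer--Vietoris induction is supposed to run does not exist, and the descent problem for morphisms $\mot X\rightarrow\mot Y$ in $\dmgm$ (agreement on a cover does not give equality without compatibility data, since $\Hom$ is only the $H^0$ of a descent spectral sequence) is acknowledged but never resolved. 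The ingredients you do verify correctly --- that for a clopen component the Gysin morphism is the projection onto the direct factor, that additivity matches on both sides, and that for an isomorphism $f^*=\tra f_*=(\mot f)^{-1}$ --- are sound, but they only treat the split case, and no valid passage from the general case to the split case is given.

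The paper's proof supplies precisely the missing mechanism: instead of localizing on $X$, base change along $f$ itself. One first proves $\tra f'_*\, f^*=g^*\,\tra f_*$ for the cartesian square formed by $Y\times_X Y$, by factoring $f$ through a projective bundle and invoking the compatibility of transfers with the Gysin morphism of a closed immersion and with the projective bundle projection. Since $f$ is \'etale, the diagonal $\delta:Y\rightarrow Y\times_X Y$ identifies $Y$ with a clopen component of $Y\times_X Y$; then $\delta^*$ is the projection onto the corresponding direct factor of $\mot{Y\times_X Y}$ and $\tra f'_*$ is its inclusion, so $\delta^*\circ\tra f'_*=1$, while $\delta^*\circ g^*=1$ by functoriality of the Gysin morphism (proposition \ref{Gysin_funct}). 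Composing the displayed identity with $\delta^*$ gives $f^*=\tra f_*$ globally, with no appeal to descent. If you want to rescue your outline, you must replace Zariski/Nisnevich localization by this self-base-change (or by some other genuinely \'etale-local argument together with a proof of the required descent for $\Hom$ groups), which is a different and essential idea, not a bookkeeping issue.
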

\begin{proof}
Consider the cartesian square of smooth schemes
$$
\xymatrix@=10pt{
Y \times_X Y\ar^-{g}[r]\ar_{f'}[d] & Y\ar^f[d] \\
Y\ar^f[r] & X.
}
$$
We first prove that $\tra f'_* f^*=g^* \, \tra f_*$.
Choose a factorisation $Y \xrightarrow i P \xrightarrow \pi X$ of 
$f$ into a closed immersion and the projection of a projective bundle. 
The preceding square can be divided into two squares
$$
\xymatrix@=10pt{
{Y \times_X Y}\ar^-j[r]\ar_{f'}[d]
 & {P \times_X Y}\ar^-q[r]\ar_{f''}[d]
 &  Y\ar^f[d] \\
 Y\ar^i[r] &  P\ar^\pi[r] &  X.
}
$$
The assertion then follows from the commutativity of the following 
diagram.
$$
\xymatrix@C=12pt@R=14pt{
\mot{Y \times_X Y}\ar@{}|{_{(1)}}[rd]
 & \mot{P \times_X Y}\ar_/-2pt/{j^*}[l]\ar@{}|{_{(2)}}[rd]
 & \mot Y\ar_/-10pt/{q^*}[l] \\
\mot Y\ar^{\tra f'_*}[u]
 & \mot P\ar^{\tra f''_*}[u]\ar^/-2pt/{i^*}[l]
 & \mot X\ar_{\tra f_*}[u]\ar^/-4pt/{p^*}[l].
}
$$
The commutativity of part (1) follows from \cite{Deg5bis}, prop. 2.5.2 
(case 1) and that of part (2) from \cite{Deg5bis}, prop. 2.2.15 (case 3).

Then, considering the diagonal immersion 
$Y \xrightarrow \delta Y \times_X Y$, it suffices to prove in view of
Proposition \ref{Gysin_funct} that $\delta^* \circ \tra f'_*=1$. 
As $Y/X$ is {\'e}tale, $Y$ is a connected component of $Y \times_X Y$. 
Thus, $\mot Y$ is a direct factor of $\mot{Y \times_X Y}$. 
Then, according to corollary \ref{cor:Gysin&open_closed},
 $\delta^*$ is the canonical projection on this direct factor.
One can easily see that $\tra f'_*$ is the canonical inclusion
 and this concludes.
\end{proof}

%\begin{rem}
%In the forthcoming article \cite{Deg9}, 
% we will generalize the previous result
%to the case of a finite surjective morphism between smooth schemes
%using the main theorem of \emph{loc. cit.}
%\end{rem}

\subsection{Duality pairings, motive with compact support}

\begin{paragr} \label{num:df_duality}
We first recall the abstract definition of duality in monoidal
categories.
Let $\C$ be a symmetric monoidal category with product $\otimes$ and unit $\unit$.
An object $X$ of $\C$ is said to be \emph{strongly dualizable}
if there exists an object $\dual{X}$ of $\C$ and two maps
$$
\eta:\unit \rightarrow \dual{X} \otimes {X}, 
\quad \epsilon:{X} \otimes \dual{X} \rightarrow \unit
$$
such that the following diagrams commute:
$$
\xymatrix{
X\ar[r]^(.3){X\otimes\eta}\ar[dr]_{1_X}
 & X\otimes\dual{X}\otimes X\ar[d]^{\epsilon\otimes X}
 &
&\dual{X}\ar[r]^(.35){\eta\otimes\dual{X}}\ar[dr]_{1_{\dual{X}}}
&\dual{X}\otimes X\otimes\dual{X}\ar[d]^{\dual{X}\otimes\epsilon}\\
&X&&&\dual{X}
}
$$
The object $X^*$ is called a \emph{strong dual} of $X$.
For any objects $Y$ and $Z$ of $\C$, we then have a canonical bijection
$$
\Hom_\C(Z\otimes X,Y)\simeq\Hom_\C(Z,\dual{X}\otimes Y).
$$
In other words, $\dual{X}\otimes Y$ is the internal $\Hom$
of the pair $(X,Y)$ for any $Y$. In particular, such a dual is unique
up to a canonical isomorphism. If $\dual{X}$ is a strong dual of $X$, then
$X$ is a strong dual of $\dual{X}$. \\
\indent Suppose $\C$ is a closed symmetric monoidal
triangulated category. Denote by $\sHom$ its internal Hom.
For any objects $X$ and $Y$ of $\C$
the evaluation map
$$X\otimes\sHom(X,\unit)\rightarrow\unit$$
tensored with the identity of $Y$ defines by adjunction a map
$$
\sHom(X,\unit)\otimes Y\rightarrow \sHom(X,Y).
$$
The object $X$ is strongly dualizable if and only if this map is an isomorphism
for all objects $Y$ in $\C$. In this case indeed,
$\dual{X}=\sHom(X,\unit)$.
% But for $Y$ fixed, the map \eqref{evtordue}
% is a morphism of triangulated functors.
\end{paragr}

\begin{paragr} \label{par:duality}
Let $X$ be a smooth projective $k$-scheme of pure dimension $n$
and denote by
$p:X \rightarrow \spec k$ the canonical projection, 
$\delta:X \rightarrow X \times_k X$ the diagonal embedding.

Then we can define morphisms
\begin{align*}
\eta: \ & \ZZ \xrightarrow{p^*} M(X)(-n)[-2n]
 \xrightarrow{\delta_*} M(X)(-n)[-2n] \otimes M(X) \\
\epsilon: \ & M(X) \otimes M(X)(-n)[-2n]
 \xrightarrow{\delta^*} M(X) \xrightarrow{p_*} \ZZ.
\end{align*}
One checks easily using the properties of the Gysin morphism these maps
turn $M(X)(-n)[-2n]$ into the dual of $M(X)$. We thus have obtained~:
\begin{prop} \label{duality}
Let $X/k$ be a smooth projective scheme.

Then the couple of morphisms $(\eta,\epsilon)$ defined above 
is a duality pairing. Thus $M(X)$ is strongly dualizable with dual
$M(X)(-n)[-2n]$.
\end{prop}

\begin{rem} \label{duality&Chow}
Using this duality in conjunction with the isomorphism \eqref{motivic_coh&Chow}, 
 we obtain for smooth projective schemes $X$ and $Y$, 
 $d$ being the dimension of $Y$, a canonical map:
\begin{align*}
CH^d(X \times Y) 
 &\simeq \Hom_{\dmgme}(\mot X \otimes \mot Y,\ZZ(d)[2d]) \\
 & \rightarrow \Hom_{\dmgm}(\mot X \otimes \mot Y,\ZZ(d)[2d]) \\
  & \qquad =\Hom_{\dmgm}(\mot X,\mot Y).
\end{align*}
As the isomorphism \eqref{motivic_coh&Chow} is compatible with products and pullbacks,
 we check easily this defines a monoidal functor from Chow motives
 to mixed motives obtaining a new construction of the stable version of the functor
 which appears in \cite[chap. 5, 2.1.4]{FSV}.
Recall that the cancellation theorem of Voevodsky \cite{V2} implies this is
 a full embedding.
\end{rem}

Note the Gysin morphism $p^*:\ZZ(n)[2n] \rightarrow M(X)$
defines indeed a homological class $\eta_X$ 
in $H^\mathcal M_{2n,n}(X)=\Hom_{\dmgm}(\ZZ(n)[2n],M(X))$.

The duality above induces an isomorphism
$$
H^{p,q}_\mathcal M(X) \rightarrow H_{p-2n,q-n}^\mathcal M(X)
$$
which is by definition the cap-product by $\eta_X$. 
Thus our duality pairing implies the classical form of Poincar{\'e} 
duality and the class $\eta_X$ is the fundamental class of $X$.
\end{paragr}

\begin{paragr} \label{par:compact}
The last application of this section 
 uses the stable version of the category of motivic
 complexes as defined in \cite[7.15]{CD1} and denoted by $\DM$.
 Remember it is a triangulated symmetric monoidal category.
 Moreover, there is a canonical monoidal fully faithful functor 
 $\dmgm \rightarrow \DM$ (see \cite[10.1.4]{CD3}).
The idea of the following definition comes from \cite[2.6.3]{CD2}:
\end{paragr} 
\begin{df} \label{df:motif_c}
Let $X$ be a smooth scheme of dimension $d$.

We define the motive with compact support of $X$ as the object
 of $\DM$
$$
M^c(X)=\mathrm R \sHom_{\DM}(M(X),\ZZ(d)[2d]).
$$
\end{df}
This motive with compact support satisfies the following properties:
\begin{enumerate}
\item[(i)] For any morphism $f:Y \rightarrow X$ of relative dimension $n$
 between smooth schemes, the usual functoriality of motives induces:
$$
f^*:M^c(X)(n)[2n] \rightarrow M^c(Y).
$$
\item[(ii)] For any projective morphism $f:Y \rightarrow X$ between
 smooth schemes, the Gysin morphism of $f$ induces:
$$
f_*:M^c(Y) \rightarrow M^c(X).
$$
\item[(iii)] Let $i:Z \rightarrow X$ be a closed immersion
 between smooth schemes, and $j$ the complementary open immersion.
 Then the Gysin triangle associated with $(X,Z)$ induces
 a distinguished triangle:
$$
M^c(Z) \xrightarrow{i_*} M^c(X) \xrightarrow{j^*} M^c(U)
 \xrightarrow{\partial_{X,Z}'} M^c(Z)[1].
$$
\item[(iv)] If $X$ is a smooth $k$-scheme of relative dimension $d$,
 $p$ its structural morphism and $\delta$ its diagonal embedding, 
 the composite morphism
$$
M(X) \otimes M(X) \xrightarrow{\delta^*} M(X)(d)[2d]
 \xrightarrow{p_*} \ZZ(d)[2d]
$$
induces a map
$$
\phi_X:M(X) \rightarrow M^c(X)
$$
which is an isomorphism when $X$ is projective (cf \ref{duality}).
Moreover, for any open immersion $j:U \rightarrow X$,
 $j^* \circ \phi_X \circ j_*=\phi_U$
 (this follows easily from \ref{trivial_proj_form_Gysin}).
\end{enumerate}

\begin{rem}
Note also that the formulas we have proved for
 the Gysin morphism or the Gysin triangle correspond
 to formulas involving the data (i), (ii) or (iii)
 of motives with compact support.
\end{rem}

\begin{num}
Consider a smooth scheme $X$ of pure dimension $d$.
According to Definition \ref{df:motif_c},
 as soon as $M(X)$ admits a strong dual $M(X)^\vee$
 in $\DM$, we get a canonical isomorphism:
\begin{equation} \label{eq:compact&dual}
M^c(X)=M(X)^\vee(d)[2d].
\end{equation}
The same remark can be applied if we work in $\DM \otimes \QQ$.
Recall that duality is known in the following cases
 (it follows for example from the main theorem of \cite{Riou}):
\end{num}
\begin{prop} Let $X$ be a smooth scheme of dimension $d$.
\begin{enumerate}
\item Assume $k$ admits resolution of singularities. \\
Then $M(X)$ is strongly dualizable in $\dmgm$.
\item In any case, 
 $M(X) \otimes \QQ$ is strongly dualizable in  $\dmgm \otimes \QQ$.
\end{enumerate}
\end{prop}
Recall that Voevodsky has defined a motive with compact
 support (even without the smoothness assumption). It
 satisfies all the properties listed above except that
 (i) and (iii) requires resolution of singularities.
Then according to the preceding proposition and formula
 \eqref{eq:compact&dual}, our definition
 agrees with that of Voevodsky if resolution of singularities
 holds over $k$ (apply \cite[chap. 5, th. 4.3.7]{FSV}).
This implies in particular that $M^c(X)$ is in $\dmgm$ or,
 in the words of Voevodsky,
 it is \emph{geometric}.
Moreover, we know from the second case of the preceding
proposition that
 $M^c(X) \otimes \QQ$ is always geometric.

%%% Local variables:
%%% mode: latex
%%% tex-main-file: "main"
%%% TeX-master: "main"
%%% ispell-local-dictionary: "english"
%%% end:

%\input{ssp}

\bibliographystyle{alpha}
\bibliography{gysin}

\begin{thebibliography}{AGV73}

\bibitem[SGA4]{SGA4}
M.~Artin, A.~Grothendieck, and J.-L. Verdier.
\newblock {\em Th\'eorie des topos et cohomologie \'etale des sch\'emas},
  volume 269, 270, 305 of {\em Lecture Notes in Mathematics}.
\newblock Springer-Verlag, 1972--1973.
\newblock S\'eminaire de G\'eom\'etrie Alg\'ebrique du Bois--Marie 1963--64
  (SGA~4).
  
\bibitem[EGA2]{EGA2}
A.~Grothendieck.
\newblock \'{E}l\'ements de g\'eom\'etrie alg\'ebrique. \emph{R\'edig\'es avec
  la collaboration de J. Dieudonn\'e.} {II}. \'{E}tude globale \'el\'ementaire
  de quelques classes de morphismes.
\newblock {\em Inst. Hautes \'Etudes Sci. Publ. Math.}, (8):222, 1961.

\bibitem[Ayo07]{Ayoub}
Joseph Ayoub.
\newblock Les six op\'erations de {G}rothendieck et le formalisme des cycles
  \'evanescents dans le monde motivique. {I}.
\newblock {\em Ast\'erisque}, (314):x+466 pp. (2008), 2007.

\bibitem[BVK08]{BVK}
L.~Barbieri-Viale and B.~Kahn.
\newblock A note on relative duality for {V}oevodsky motives.
\newblock {\em Tohoku Math. J. (2)}, 60(3):349--356, 2008.

\bibitem[CD07]{CD2}
D.-C. Cisinski and F.~D{\'e}glise.
\newblock Mixed {W}eil cohomologies.
\newblock arXiv:0712.3291, 2007.

\bibitem[CD09a]{CD1}
D.-C. Cisinski and F.~D{\'e}glise.
\newblock Local and stable homological algebra in {G}rothendieck abelian
  categories.
\newblock {\em HHA}, 11(1):219--260, 2009.

\bibitem[CD09b]{CD3}
D.-C. Cisinski and F.~D{\'e}glise.
\newblock Triangulated categories of mixed motives.
\newblock arXiv:0912.2110, 2009.

\bibitem[D{\'e}g02]{Deg1}
F.~D{\'e}glise.
\newblock {\em Modules homotopiques avec transferts et motifs g\'en\'eriques}.
\newblock PhD thesis, Universit\'e Paris VII, 2002.

\bibitem[D{\'e}g04]{Deg3}
F.~D{\'e}glise.
\newblock Interpr{\'e}tation motivique de la formule d'exc{\`e}s
  d'intersection.
\newblock {\em C. R. Math. Acad. Sci. Paris}, 338(1):41--46, 2004.
\newblock Presented by J.P. Serre.

\bibitem[D{\'e}g08a]{Deg8}
F.~D{\'e}glise.
\newblock Around the {G}ysin triangle {II}.
\newblock {\em Doc. Math.}, 13:613--675, 2008.

\bibitem[D{\'e}g08b]{Deg5bis}
F.~D{\'e}glise.
\newblock Motifs g{\'e}n{\'e}riques.
\newblock {\em Rendiconti Sem. Mat. Univ. Padova}, 119, 2008.

\bibitem[D{\'e}g09]{Deg9}
F.~D{\'e}glise.
\newblock Modules homotopiques.
\newblock arXiv:0904.4747v2, 2009.

\bibitem[D{\'e}g11]{Deg11}
F.~D{\'e}glise.
\newblock Coniveau filtration and motives.
\newblock 2011.

\bibitem[FL98]{FL}
Eric~M. Friedlander and H.~Blaine Lawson.
\newblock Moving algebraic cycles of bounded degree.
\newblock {\em Invent. Math.}, 132(1):91--119, 1998.

\bibitem[FSV00]{FSV}
E.M. Friedlander, A.~Suslin, and V.~Voevodsky.
\newblock {\em Cycles, {T}ransfers and {M}otivic homology theories}.
\newblock Princeton Univ. Press, 2000.

\bibitem[Ful98]{Ful}
W.~Fulton.
\newblock {\em Intersection theory}.
\newblock Springer, second edition, 1998.

\bibitem[HK06]{HK}
Annette Huber and Bruno Kahn.
\newblock The slice filtration and mixed {T}ate motives.
\newblock {\em Compos. Math.}, 142(4):907--936, 2006.

\bibitem[Ivo10]{Ivo}
Florian Ivorra.
\newblock R\'ealisation {$\ell$}-adique des motifs triangul\'es
  g\'eom\'etriques. {II}.
\newblock {\em Math. Z.}, 265(1):221--247, 2010.

\bibitem[Lev98]{Lev}
Marc Levine.
\newblock {\em Mixed motives}, volume~57 of {\em Mathematical Surveys and
  Monographs}.
\newblock American Mathematical Society, Providence, RI, 1998.

\bibitem[Rio05]{Riou}
J.~Riou.
\newblock Dualit\'e de {S}panier-{W}hitehead en g\'eom\'etrie alg\'ebrique.
\newblock {\em C. R. Math. Acad. Sci. Paris}, 340(6):431--436, 2005.

\bibitem[SV00]{SV}
A.~Suslin and V.~Voevodsky.
\newblock Bloch-{K}ato conjecture and motivic cohomology with finite
  coefficients.
\newblock In {\em The arithmetic and geometry of algebraic cycles (Banff, AB,
  1998)}, volume 548 of {\em NATO Sci. Ser. C Math. Phys. Sci.}, pages
  117--189. Kluwer Acad. Publ., Dordrecht, 2000.

\bibitem[Voe02]{V2}
V.~Voevodsky.
\newblock Cancellation theorem.
\newblock arXiv:math/0202012v1, 2002.

\end{thebibliography}

\end{document}